\renewcommand*\l@section{\@dottedtocline{1}{1.5em}{2.3em}}
\theoremstyle{plain}
\newtheorem{theorem}{Theorem}
\newtheorem{proposition}[theorem]{Proposition}
\newtheorem{lemma}[theorem]{Lemma}
\newtheorem{corollary}[theorem]{Corollary}
\newtheorem{example}[theorem]{Example}
\theoremstyle{definition}
\newtheorem{definition}{Definition}
\theoremstyle{remark}
\newtheorem{remark}[theorem]{Remark}
\numberwithin{equation}{section}
\numberwithin{theorem}{section}
\begin{document}

\vspace{3cm}

\begin{center}

{\Large {\textbf { Topological persistence  of   configuration  spaces  and  
 independence  complexes  for  digraphs  }}}

 \vspace{0.58cm}

Shiquan Ren

\smallskip

\begin{quote}
\begin{abstract}
We  study  the  topological persistence  of  the  (path)  configuration  spaces  and 
 the   (path)  
independence  complexes  
for  digraphs  as  well  as     their  underlying  graphs.  
We  construct   some  canonical  embeddings  from  the    (path)  
independence  complexes  
of  the  underlying  graphs  to  the   (path)  
independence  complexes 
of  the  digraphs  as  well  as   some canonical  embeddings  between  
the       (path)  
independence  complexes  
induced  by  strong totally  geodesic  immersions  and  strong totally  geodesic  embeddings  
of  (di)graphs.  
We  apply    the  path  homology  to  
the  path  independence  complexes  of  (di)graphs.  
As  by-products,  we   derive   some  consequences  about  the  Shannon  capacities.      
\end{abstract}

\smallskip

{ {\bf 2010 Mathematics Subject Classification.}  	 Primary  
  51E30,  55N31;
  Secondary  	05B25, 05B40
 }

{{\bf Keywords and Phrases.}  
 configuration  spaces,  independence  complexes,   digraphs,    immersions and  embeddings,  
 chain  complexes,  persistent  homology

  }

\end{quote}

\end{center}

\section{Introduction}

{\bf (a)  Configuration  spaces}.  
Let  $X$  be  a  space.  The   {\it  $k$-th  ordered  configuration  space}
${\rm  Conf}_k(X)$  is  the  subspace  of  the  Cartesian  product  $X^k$
 such that  the  $k$-coordinates  in  $X$  are  distinct.  
 The  $k$-th  symmetric  group  $\Sigma_k$  acts  on  ${\rm  Conf}_k(X)$  
 freely  by  
 perturbing  the  coordinates.   The  {\it  $k$-th  unordered  configuration  space}  is  the  orbit  space  
     ${\rm  Conf}_k(X)/\Sigma_k$.

  Suppose  in  addition   that  $X$   is  equipped  with  a  metric  $d: X\times  X\longrightarrow  [0,+\infty]$.       
  For any  $r\geq  0$,  
  the   $k$-th {\it  ordered  configuration  space  of  hard  $r$-spheres}   
${\rm  Conf}_k(X,r)$  is  the  subspace  of  ${\rm  Conf}_k(X)$  
 consisting  of  the configurations 
   $(x_1,x_2,\ldots,x_k)$  such  that  $d(x_i,x_j)>2r$  for  any  $i\neq  j$.  
 The  $k$-th   {\it unordered  configuration  space    of  hard  $r$-spheres}    is  the  orbit  space  
     ${\rm  Conf}_k(X,r)/\Sigma_k$.  
       For  any    
$0\leq  r<s\leq   \infty$,
  we  define  
the   $k$-th  {\it   configuration  space}   of   $X$  {\it  with  constraint  $(r,s)$}    as  the  space   
\begin{eqnarray*}
{\rm  Conf}_k(X,r,s) =  \{(x_1,\ldots,x_k)\in  X^k\mid   2r< d(x_i,x_j)\leq  2s {\rm~for~any~}i\neq  j\},     
\end{eqnarray*}
which  is   the  complement  of  ${\rm  Conf}_k(X,s)$ 
     in ${\rm  Conf}_k(X,r)$,  with   the  product  metric $d^k$.  
     Note  that    ${\rm  Conf}_k(X,r,s)$    is  $\Sigma_k$-invariant thus  
     we  have an  orbit  space  ${\rm  Conf}_k(X,r,s)/\Sigma_k$.  
We  have a   double-parametrized  filtration  
\begin{eqnarray*}
{\rm  Conf}_k(X,-,-)  =  \{  {\rm  Conf}_k(X,r,s) \mid   0\leq  r<s\leq   \infty \}
\end{eqnarray*}
which  induces  a  double-persistent  homology 
\begin{eqnarray*}
H_*({\rm  Conf}_k(X,-,-) ) =  \{ H_*( {\rm  Conf}_k(X,r,s)) \mid   0\leq  r<s\leq   \infty \}.  
\end{eqnarray*}
 The  symmetric  group  $\Sigma_k$  acts  on  ${\rm  Conf}_k(X,-,-) $   freely
 such  that  the  double-filtration  is  $\Sigma_k$-equivariant.   
  This    induces  a   $\Sigma_k$-action  on 
   the     homology  $H_*({\rm  Conf}_k(X,-,-) )$ 
 such  that the  double-persistence  is  $\Sigma_k$-equivariant.

    One   special  case  for  configuration  spaces   is  that  $X$  is  a  manifold  $M$.  
    In   1978,  F.  R.  Cohen  and  L.  R.  Taylor  
    \cite{gelfand,gelfand2}  studied   the cohomology  of  the  ordered  configuration  space 
    ${\rm  Conf}_k(M)$  and    the  unordered  configuration  space   ${\rm  Conf}_k(M)/\Sigma_k$. 
    In  2010,  
    an  introduction  to     ${\rm  Conf}_k(M)$  and   
     ${\rm  Conf}_k(M)/\Sigma_k$   as  well  as their   applications  
       is given  by  F.  R.  Cohen     \cite{cohen2010}. 
            For  the  special  case  that   $M$   is  the  Euclidean  space,  
      F.  R.  Cohen  \cite{cohen1973,cohen1973.1}  obtained the    information on
   the  cohomology  of  
       ${\rm  Conf}_k(\mathbb{R}^m)$  and   ${\rm  Conf}_k(\mathbb{R}^m)/\Sigma_k$
       by  using  $m$-fold loop  spaces;    
      and  the  cohomology  of  ${\rm  Conf}_k(\mathbb{R}^2)/\Sigma_k$ 
       is  applied   by  F.  Cohen  and  D.  Handel  \cite{cohen1}  to  study 
         the  $k$-regular  embeddings  
       of  the  plane  into  ambient  Euclidean  spaces.

       In  addition,  if  $M$  has  a  Riemannian  metric  thus  has  an  induced  distance  $d$,  
       then  we  have  the  configuration  spaces  of  hard  $r$-spheres
       ${\rm  Conf}_k(M,r)$  and  ${\rm  Conf}_k(M,r)/\Sigma_k$,  
       which   give  information  about  the  sphere-packings  on  $M$.    
       As  $r$  varies,  the  persistent  homology  of   the  configuration  spaces  of  hard  $r$-spheres
       in  a  strip  is  studied  by  H.   Alpert  and  Fedor Manin  \cite{gt}.  
         With  the  help  of  the  Min-type  Morse  theory  (cf.  \cite{min-type}),  
       it  is  proved  by Y.  Baryshnikov, P.   Bubenik  and   M. Kahle  \cite{imrn1}  
       that mechanically  balanced  configurations  in a  bounded  region  in  Euclidean  spaces  
       play  the  role  of  critical  points.

  Another  special   case   for  configuration  spaces  is  that  $X$  is  a  graph  $G$.  
     Consider  the geometric  realization  $|G|$  of  a  graph  $G$,  which  is  a  
     $1$-dimensional  cell  complex. 
     In  recent  years,  
     the    ordered   configuration  space   ${\rm  Conf}_k(|G|)$  and  
     the  unordered  configuration  space  ${\rm  Conf}_k(|G|)/\Sigma_k$   
     of  $k$-distinct  points  in  $|G|$ 
      have  been  extensively  studied,  for  example,   R.  Ghrist \cite{ghrist},  
      B.  Knudsen \cite{gconf1},  F.  R.  Cohen  and R.  Huang  \cite{cohen22},  
      etc.  
      The  homeomorphic type  of    ${\rm  Conf}_k(|G|)$  as  well  as  ${\rm  Conf}_k(|G|)/\Sigma_k$ 
is  determined  by  the  homeomorphic  type  of  $|G|$,  
which  is  determined  by  the combinatorial  structures  of  $G$.  
 However,  even  if  the  geometric  realizations  $|G|$  and  $|G'|$  of  two graphs  
 $G$  and  $G'$  are  homeomorphic,  
 the  combinatorial structures   of  $G$  and  $G'$  could  be  different.

 {\bf  (b)   Digraphs and their path  complexes}.  
 A  digraph  $\vec  G$  is  obtained  by  
  assigning  a  direction  or  both  directions  to  each  edge  of  a  graph  $G$  
  while  a  graph  $G$  is  obtained  by  forgetting  the  direction   on  each  
  arc   of  a  digraph  $\vec  G$  (see   Definition~\ref{def-dig-25-apr-1}).

  Let  $V$  be  the  vertex  set  of  $\vec   G$.  
  An   {\it  elementary  $n$-path}  on  $V$   is  a  sequence  
   $v_0v_1\ldots  v_n$    of  vertices    $v_0,v_1,\ldots,v_n\in   V$.  
   In  addition,  $v_0v_1\ldots  v_n$ 
  is    {\it regular}  if     $v_{i-1}\neq  v_i$  for each  $1\leq  i\leq  n$
  and  is  {\it   non-regular}  if  $v_{i-1}= v_i$  for  some  $1\leq  i\leq  n$.  
  The  collection  of  all  the  elementary  paths  of  finite  lengths  on  $V$  generates  a  
  free  abelian  group  
  $\Lambda_*(V)= \bigoplus_{n\geq  0}  \Lambda_n(V)$,  which  is  
  a  chain  complex  with  its  boundary  map  
  sending   each  elementary $n$-path  $v_0v_1\ldots  v_n$  to  
  a   linear  combination  of  elementary  $(n-1)$-paths   
  $\sum_{i=0}^n   (-1)^i v_0\ldots \widehat{v_i} \ldots  v_n$.  
  The  collection  of  all  the   non-regular  elementary  paths  of  finite  lengths  on  $V$  
 generates  a   sub-chain  complex   $I_*(V)= \bigoplus_{n\geq  0}  I_n(V)$  
 of  $\Lambda_*(V)$  (cf.  \cite[Lemma~2.9~(a)]{lin2}).  
  The  quotient  chain  complex  $\mathcal{R}_*(V)= \bigoplus_{n\geq  0}\mathcal{R}_n(V)$,   
  where    $\mathcal{R}_n(V)= \Lambda_n(V) /  I_n(V) $   for  each  $n\geq  0$,  
    is   generated  by  the  collection  of  all  the  regular  elementary   paths  on  $V$
    and  is  equipped  with  the  quotient boundary  map  by  dropping  
      all the non-regular components  
     (cf.  \cite[Definition~2.10]{lin2}).

  An  {\it  allowed}  elementary   $n$-path  on  a  digraph  
  $\vec  G$  is  a sequence  of  vertices $v_0v_1\ldots  v_n$ 
  such  that  for each  $1\leq  i\leq  n$,  
  either  $v_{i-1}=v_i$  or  $v_{i-1}\to  v_i$  is  an arc  of  $\vec  G$. 
  The  collection  of  all  the  allowed   regular  elementary    paths   of  finite  lengths  on  
    $\vec  G$  generates 
 a  subgroup  $\mathcal{A}_*(\vec  G)=\bigoplus_{n\geq  0} \mathcal{A}_n(\vec  G)$
    of   $\mathcal{R}_*(V)$.   
   Given  an   allowed  elementary  $n$-path  $v_0v_1\ldots  v_n$  in  $\mathcal{A}_n(\vec  G)$,  
   both  $\widehat{v_0} v_1\ldots  v_n$  and   $v_0v_1\ldots  \widehat{v_n}$
   are  elementary  $(n-1)$-paths  in  $\mathcal{A}_{n-1}(\vec  G)$.  
   However,  $v_0\ldots\widehat{v_i}\ldots  v_n$  may  not  be    an  elementary  $(n-1)$-path 
    in  $\mathcal{A}_{n-1}(\vec  G)$,   
       for  $1\leq  i\leq  n-1$. 
    Thus  $\mathcal{A}_*(\vec  G)$  may  not  be   a  sub-chain  complex  of   $\mathcal{R}_*(V)$.    
       A  sub-chain   complex   $ \Omega_*(\vec  G)=\bigoplus_{n\geq  0} \Omega_n(\vec  G)$
       of  $\mathcal{R}_*(V)$, 
 where  
 \begin{eqnarray*}
 \Omega_n(\vec  G)=  \mathcal{A}_n(\vec  G)  \cap \partial_n^{-1} \mathcal{A}_{n-1}(\vec  G),  
 \end{eqnarray*}
 is  constructed  
   and      the path homology  $H(  \Omega_*(\vec  G))$  of  $\vec  G$   is  studied  by  
 A.  Grigor'yan,  Y.  Lin, Y.  Muranov  and  S.-T. Yau  \cite{lin2,lin3,lin6}.  
 Later,   with the  help  of  \cite[Sec. 2]{h1},     $ \Omega_*(\vec  G)$  
  is  the  largest  chain  complex    contained  in  $\mathcal{A}_*(\vec  G)$,   
  denoted  as  ${\rm  Inf}( \mathcal{A}_*(\vec  G))$,  which  is  quasi-isomorphic  to  
  the  smallest  chain  complex   containing  $\mathcal{A}_*(\vec  G)$,  
  denoted  as   ${\rm  Sup}( \mathcal{A}_*(\vec  G))$.

 {\bf   (c)   The  Shannon  capacities}.  
 For  any  graph  $G$,  an  independent  set  is  a  collection  of  some  vertices  of  $G$ 
 such  that  any  two  of them  are  non-adjacent.  
All  the  finite  independent  sets  of  $G$  form   a  simplicial  complex  ${\rm  Ind}(G)$
which  is  called  the  
{\it  independence  complex}   (cf.  \cite{ind1,ind2}).   
Let  $\alpha(G)$  be  the  maximal  size  of  the  independent  sets  of   $G$,  i.e. 
$\alpha(G)-1$  is  the  dimension  
of  the  independence  complex.

  Given two  graphs $ G_1=(V_1,E_1)$ and $   G_2=(V_2,E_2)$,  their  {\it strong product} 
  $  G_1\boxtimes  G_2$    is   the    graph  whose  vertex set  is  $V_1\times V_2$   and  whose  edge  set  is  specified  by the following  rule:   
 for  any  distinct two  vertices  $(v_1,v_2)$  and  $ (u_1,u_2)$,    
 there  is  an  edge  between  them  
  iff  for each $i=1,2$, either $v_i=u_i$  or  $\{v_i,u_i\}\in E_i$  (cf. \cite{2,1,4}). 
    Let    $G^{\boxtimes  n}$ be the $n$-fold  self-strong product of $G$.  
 Motivated by the study of the channels in information theory,  C.  E.   Shannon \cite{4}  in 1956 
  introduced    the  capacity  $c(G)$,  which  is  given  by  
 (cf. \cite[p. 1]{1}  and   \cite{2,7,4})
\begin{eqnarray*} 
c(G)=\sup_{n\geq 1}~ \big(\alpha(G^{\boxtimes n})\big)^{\frac{1}{n}}=\lim _{n\to \infty}  \big(\alpha(G^{\boxtimes  n})\big)^{\frac{1}{n}}.
\end{eqnarray*} 
 So  far,  the study of  the  Shannon capacity of  graphs  has attracted  lots   of attention  (cf. \cite{2,7,8,1,5,4}). 
Moreover,  the  Shannon  capacity  of  graphs  is  generalized  to  the  capacity  of digraphs 
in the  sense  of  the  adjacency  matrices   by 
E. Bidamon  and  H.  Meyniel  \cite{dig-shan}.

{\bf  (d)  Results  of  this  paper}.  
  Let  $\vec  G$  be  a  digraph  and  let   $G$  be  its  underlying  graph.   
    We  consider  the  configuration  space   ${\rm  Conf}_k(\vec  G)$    
    consisting  of  all  the   ordered   $k$-tuples  of  mutually  non-adjacent  distinct  vertices in  $\vec  G$,  
    which  equals to 
   the  configuration  space  ${\rm  Conf}_k(G)$   
    consisting  of  all  the    ordered   $k$-tuples  of  mutually  non-adjacent  distinct  vertices of  $   G$.  
   The  family  of  configuration  spaces   ${\rm  Conf}_k(\vec  G)/\Sigma_k$,  
   which  equal  to  
      ${\rm  Conf}_k(  G)/\Sigma_k$,    for  $k\geq  1$,    
   gives  the  skeleton  of  the  independence  complex  ${\rm  Ind}(\vec  G)$  of  $\vec  G$,  
   which  equals to   the  independence  complex
     ${\rm  Ind}(  G)$  of  $G$.   The  Shannon  capacity  of   $G$  is  expressed  in  terms  of  the 
     dimension  of  the  independence  complex  of   the  self-strong  products  of  $G$.

We   take  the  canonical  distance $d_{\vec  G}$  on  the  vertex  set  such  that  
the  distance  between  any  two  vertices  is  the  minimal  length  of the paths  in  $\vec  G$
 connecting  the   two  vertices.   Similarly,  we  take the  canonical  distance  $d_G$ on  the vertex  set  
 by  the  the  minimal  length   of  the  paths  in  $G$.   
 For  any  $0\leq  r<s\leq  \infty$,  
consider  the {\it  constraint  configuration  space}   
${\rm  Conf}_k(\vec  G,r,s)/\Sigma_k$   consisting  of  all  the  ordered  $k$-tuples 
of  vertices  such  that  their  mutual  distances      $d_{\vec  G}$   lie  in   the  interval  $(2r,2s]$   
and  the constraint  configuration  space   
${\rm  Conf}_k(   G,r,s)/\Sigma_k$   consisting  of  all  the  ordered  $k$-tuples 
of  vertices  such  that  their  mutual  distances      $d_{   G}$   lie  in   $(2r,2s]$.   
Let  $k$  run  over  all  positive  integers.  
The  family  of  configuration  spaces  ${\rm  Conf}_k(\vec  G,r,s)$  gives  
a  {\it  constraint  independence  complex}   ${\rm  Ind}(\vec  G,r,s)$  
and  the  family  of  configuration  spaces  ${\rm  Conf}_k(   G,r,s)$  gives  
a  constraint  independence  complex  ${\rm  Ind}(   G,r,s)$.  
Let  $0\leq  r<s\leq  \infty$  run  over  all  possible  pairs  of  nonnegative  real  numbers
and  infinity.   
The  next  theorem  will be  proved  in  Subsection~\ref{ssa-3.2}.

\begin{theorem}\label{th-main-intro-1}
For  any  digraph  $\vec  G$  with  its  underlying graph  $G$,  
   we  have  a    family  of  
    persistent  $\Sigma_k$-equivariant   isometric   embeddings  
    $i_{\vec  G,k}(-,\infty)$  of  ${\rm  Conf}_k(   G , -,\infty)  $  into  $ {\rm  Conf}_k( \vec  G, -,\infty)$ 
    and   a    family  of  
    persistent  $\Sigma_k$-equivariant   isometric   embeddings  $j_{\vec  G,k}(1/2,-)$  of  ${\rm  Conf}_k(  \vec G ,1/2,-)  $  into  $ {\rm  Conf}_k(   G, 1/2,-)$  for  $k\geq  1$,   
   which  induce    a   persistent  
   simplicial  
   embedding
   \footnote[1]{A  (persistent)  simplicial embedding  is  an  injective  
   (persistent)  simplicial  map  between  (filtered) 
   simplicial  complexes.  }
    $i_{\vec  G }(-,\infty)$ 
of  ${\rm  Ind}(   G,-,\infty)$  into  ${\rm  Ind}(\vec  G,-,\infty)$
and  a   persistent  
   simplicial  
   embedding  $j_{\vec  G }(1/2,-)$ 
of  ${\rm  Ind}(  \vec  G,1/2,-)$  into  ${\rm  Ind}(   G,1/2,-)$,  
   such  that
   \begin{enumerate}[(1)]
   \item
     $ i_{\vec  G,k}(1/2,\infty)=j_{\vec  G,k}(1/2,\infty)^{-1}$  and   
   $i_{\vec  G}(1/2,\infty)=j_{\vec  G}(1/2,\infty)^{-1}$  are  the  identity  maps,
   \item  
      $ i_{\vec  G,k}(n/2,\infty)$  and  $i_{\vec  G}(n/2,\infty)$  
    are  inclusions  for  any  $2\leq  n<   \infty$,
   \item  
      $ j_{\vec  G,k}(1/2,n/2)$  and  $i_{\vec  G}(1/2,n/2)$  
    are  inclusions  for  any  $2\leq  n<   \infty$.  
    \end{enumerate}
\end{theorem}

A  strong  totally  geodesic  embedding  of  graphs     
is  a  graph  morphism preserving  the   distances  of  vertices  (cf.  \cite{2025-reg}).  
Similarly,  a strong  totally  geodesic  immersion     of  graphs   with  radius  $r$
is  a  graph  morphism preserving  the   distances    locally  
 in   the   geodesic balls  of  radius  $r$ (see  Definition~\ref{def-2}~(2)). 
Similarly,  by  using  the  distances  of  digraphs,     
strong  totally  geodesic  embeddings  of  digraphs  
and    strong  totally  geodesic  immersions  of  digraphs  
can  be  defined  
   (see  Definition~\ref{def-2}~(1)). 
   The  next  theorem  will be  proved  in  Subsection~\ref{ssa-3.3}.

   \begin{theorem}\label{th-main-intro-2}
     A  strong   totally geodesic  immersion  
   with radius  $m_0/2$  (resp.   a  strong   totally geodesic  embedding) 
    $\varphi:   \vec  G\longrightarrow   \vec   G'$  
      induces  a   family  of   double-persistent      $\Sigma_k$-equivariant     isometric  embeddings  
   of  $ {\rm  Conf}_k( { \vec  G}, -,-)$  into  $   {\rm  Conf}_k( {\vec   G'}, -,-) $
    for   $k\geq  1$,  
   and  thereby  induces  a        double-persistent  simplicial  embedding 
     of  $ {\rm  Ind} ( { \vec  G}, -,-)$  into  $   {\rm  Ind}( {\vec   G'}, -,-) $, 
   for  $1\leq  n<m\leq  m_0 $  (resp.   for  $1\leq  n<m\leq  \infty $), 
    where  $n/2$  is  the  first parameter  and  $m/2$  is  the  second  parameter  
   in  the  double-persistence.   
   \end{theorem}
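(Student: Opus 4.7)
The plan is to define the map coordinate-wise from $\varphi$ and then descend to independence complexes. Set
\[
\Phi_k(v_1,\ldots,v_k) = (\varphi(v_1),\ldots,\varphi(v_k)),
\]
and regard this as a candidate map from $\mathrm{Conf}_k(\vec G, \cdot, \cdot)$ into $\mathrm{Conf}_k(\vec G', \cdot, \cdot)$ on configurations with double-persistence parameters $1\le n<m\le m_0$ in the immersion case (or arbitrary $1\le n<m$ in the embedding case).

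The first and main step is well-definedness: I would check that $\Phi_k$ sends $\mathrm{Conf}_k(\vec G, n, m)$ into $\mathrm{Conf}_k(\vec G', n, m)$. For this, one uses the distance-preservation property supplied by Definition~\ref{def-2}: the radius $m_0/2$ in the hypothesis is arranged precisely so that every pairwise distance appearing in a configuration with upper parameter $m\le m_0$ falls inside the range on which $\varphi$ is guaranteed to be an isometry (each such pair sits in a common geodesic ball of radius $m_0/2$, e.g.\ one centred near a midpoint of a shortest path). In the embedding case the isometry property is unconditional, so no bound on $m$ is needed. Either way, $(\varphi(v_1),\ldots,\varphi(v_k))$ has the same pairwise distances as $(v_1,\ldots,v_k)$, hence lies in the corresponding distance window of $\mathrm{Conf}_k(\vec G', n, m)$ and has pairwise distinct entries.

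The remaining properties are then essentially formal. $\Sigma_k$-equivariance is immediate from the coordinate-wise definition; $\Phi_k$ is injective and isometric (for the natural sup metric on $k$-tuples) because $\varphi$ is, on the relevant range; and double-persistence is the tautological statement that $\Phi_k$ is given by the same formula for every admissible $(n,m)$, so it commutes with every inclusion obtained by tightening the window $(n,m]$ in either coordinate. Passing to $\Sigma_k$-orbits and taking the union over $k$ yields the simplicial assignment $\{v_1,\ldots,v_k\}\mapsto\{\varphi(v_1),\ldots,\varphi(v_k)\}$ between the independence complexes, which is a simplicial embedding by the injectivity already established.

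The underlying-graph version is obtained verbatim, with $d_G$ in place of $d_{\vec G}$ and the graph clause of Definition~\ref{def-2}. The principal obstacle is the first step: one must match the precise meaning of ``strong totally geodesic immersion with radius $m_0/2$'' in Definition~\ref{def-2} against the window of pairwise distances that the persistence parameters $(n,m)$ with $m\le m_0$ carve out on configurations. Once this geometric compatibility is pinned down, the remainder of the proof reduces to routine bookkeeping.
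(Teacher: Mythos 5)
Your proof takes essentially the same route as the paper's (Propositions~\ref{le-9.a1q},~\ref{le-9.au-1q} and Corollaries~\ref{co-au-1},~\ref{co-au-2}): define $\varphi_k$ coordinate-wise, check well-definedness from $m\le m_0$ together with Definition~\ref{def-2}, then treat equivariance, isometry, and double-persistence as formal and descend to $\Sigma_k$-orbits and independence complexes. The one small inaccuracy is your parenthetical gloss on the radius $m_0/2$: the definition of a strong totally geodesic immersion with radius $m_0/2$ is not a statement about geodesic balls or midpoints but simply that $\varphi$ preserves $d_{\vec G}(u,v)$ for every pair with $d_{\vec G}(u,v)\le m_0$, and the configuration constraint $n < d_{\vec G}(v_i,v_j) \le m \le m_0$ hands you exactly that hypothesis directly, so your conclusion is right but the mechanism is more immediate than your geodesic-ball heuristic suggests.
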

   
   \begin{remark}
     A   similar  statement  is   satisfied  by  substituting  $\vec  G$
  and  $\vec  G'$  with  $G$  and  $G'$  respectively throughout  Theorem~\ref{th-main-intro-2}.   
   \end{remark}

Consider  the  {\it  path  configuration  space}   
$\overrightarrow {\rm  Conf}_k(\vec  G,r,s)$   consisting  of  all  the  ordered  $k$-tuples 
of  vertices  such  that  the  distances   in  $d_{\vec  G}$  between  any   two  adjacent  coordinates 
    lie  in   $(2r,2s]$.     
Similarly,  consider  the  path  configuration  space   
$\overrightarrow  {\rm  Conf}_k(   G,r,s)$   consisting  of  all  the  ordered  $k$-tuples 
of  vertices  such  that   the  distances   in  $d_{   G}$  between  any  two  adjacent  coordinates  
 lie  in   $(2r,2s]$.   
 Let  $k$  run  over  all  positive  integers.  
The  family  of   path  configuration  spaces  $\overrightarrow {\rm  Conf}_k(\vec  G,r,s)$  gives  
a  {\it   path  independence  complex}  $\overrightarrow {\rm  Ind}(\vec  G,r,s)$  
and  the  family  of  path  configuration  spaces  $\overrightarrow {\rm  Conf}_k(   G,r,s)$  gives  
a  path   independence  complex  $\overrightarrow {\rm  Ind}(   G,r,s)$.

Let  $\mathcal{D}_k(\vec  G, -,-)$  be  the  double-persistent  free  $R$-module 
spanned  by  $\overrightarrow {\rm  Conf}_k(\vec  G,-,-)$     and  let   $\mathcal{D}(\vec  G, -,-)= \bigoplus_{k\geq  1} \mathcal{D}_k(\vec  G, -,-)$,  where  $R$  is  a  commutative  ring  with  unit.  
Recall  that  by  \cite[Sec. 2]{h1}  or  an  analog  of  \cite[Sec.  9]{2025-jgp}, 
  the  largest  double-persistent  chain  complex  ${\rm  Inf}(\mathcal{D}(\vec  G, -,-))$ 
contained  in  $\mathcal{D}(\vec  G, -,-)$  and  the  smallest   double-persistent  chain  complex  
${\rm   Sup}(\mathcal{D}(\vec  G, -,-))$ 
containing    $\mathcal{D}(\vec  G, -,-)$  are  quasi-isomorphic.  
Therefore, 
   it  is  reasonable  to  define the  double-persistent  path  homology  of  $\mathcal{D}(\vec  G, -,-)$   
as  the  double-persistent  homology  of  ${\rm  Inf}(\mathcal{D}(\vec  G, -,-))$,
  which  is  isomorphic  to
 the  double-persistent  homology  of     
${\rm  Sup}(\mathcal{D}(\vec  G, -,-))$.  
 Similar  definitions  and    notations  apply   if  we   substitute  $\vec  G$  with  $G$.  
 The  next  two theorems  are path  versions  of  Theorem~\ref{th-main-intro-1}
 and  Theorem~\ref{th-main-intro-2}  respectively.  
 They  will be  proved  in  Section~\ref{ssa-4}.  

\begin{theorem}\label{th-main-intro-3}
For  any  digraph  $\vec  G$  with  its  underlying  graph  $G$, 
we  have   a   family  of    persistent   $\mathbb{Z}_2$-equivariant    isometric  embeddings  
 $I_{\vec  G}(-,\infty)$
of  $\overrightarrow {\rm  Conf}_k(   G,-,\infty)$  into  $\overrightarrow {\rm  Conf}_k(  \vec  G,-,\infty)$
and   a   family  of    persistent   $\mathbb{Z}_2$-equivariant    isometric  embeddings  
$J_{\vec  G}(1/2,-)$
of  $\overrightarrow {\rm  Conf}_k( \vec   G,1/2,-)$  into  $\overrightarrow {\rm  Conf}_k(    G,1/2,-)$
for  $k\geq  1$,  
which  respectively  induce      
 a persistent   
 $\mathbb{Z}_2$-equivariant  homomorphism  $I_{\vec  G}(-,\infty)_*$    from  the   persistent  homology 
  $H_*(\mathcal{D} (  G,  -,  \infty ))$  to  the   persistent  homology  
  $H_*(\mathcal{D} (  \vec G,  -,  \infty))$
  and  a persistent   
 $\mathbb{Z}_2$-equivariant  homomorphism   $J_{\vec  G}(1/2,-)_*$  from  the   persistent  homology 
  $H_*(\mathcal{D} ( \vec  G, 1/2,  - ))$  to  the   persistent  homology  
  $H_*(\mathcal{D} (   G,  1/2,  -))$,  such  that  
  $I_{\vec  G}(1/2,\infty)=J_{\vec  G}(1/2,\infty)^{-1}$  is   the  identity.  
\end{theorem}

   \begin{theorem}\label{th-main-intro-4}
     A  strong   totally geodesic  immersion  
   with radius  $m_0/2$  (resp.   a  strong   totally geodesic  embedding) 
    $\varphi:   \vec  G\longrightarrow   \vec   G'$  
      induces  a  family  of   double-persistent      $\mathbb{Z}_2$-equivariant  
        isometric   embeddings    
   of  $\overrightarrow  {\rm  Conf}_k( { \vec  G}, -,-)$  into  
   $ \overrightarrow  {\rm  Conf}_k( {\vec   G'}, -,-) $
      for  $1\leq  n<m\leq  m_0 $  (resp.   for  $1\leq  n<m \leq  \infty$),  
   and  thereby  induces  a    double-persistent          
 $\mathbb{Z}_2$-equivariant  homomorphism    
 from  $H_*(\mathcal{D} ( \vec  G,  -,  - ))$  to  $H_*(\mathcal{D} (  \vec G',  -,  - ))$
    for  $1\leq  n<m\leq  m_0 $  (resp.   for  $1\leq  n<m \leq  \infty$), 
    where  $n/2$  is  the  first parameter  and  $m/2$  is  the  second  parameter  
   in  the  double-persistence.   
   \end{theorem}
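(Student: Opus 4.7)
The plan is to adapt the proof of Theorem~\ref{th-main-intro-2} to the
path configuration spaces, replacing the $\Sigma_k$-action by the
$\mathbb{Z}_2$-action generated by path reversal, and then to pass
through the ${\rm Inf}/{\rm Sup}$ chain complex in order to obtain the
statement on path homology. Concretely, I would set
$\varphi_*(v_0,v_1,\ldots,v_{k-1})=(\varphi(v_0),\varphi(v_1),\ldots,\varphi(v_{k-1}))$
coordinate-wise and verify in turn that (i) the image lies in
$\overrightarrow{\rm Conf}_k(\vec G',r,s)$, (ii) $\varphi_*$ is
injective and isometric on adjacent coordinates, (iii) it is
$\mathbb{Z}_2$-equivariant with respect to reversal, and (iv) it is
natural in the two persistence parameters.

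The decisive simplification compared with Theorem~\ref{th-main-intro-2}
is that only adjacencies, rather than all pairs, enter the definition
of $\overrightarrow{\rm Conf}_k$. For an adjacent pair $(v_{i-1},v_i)$
in a configuration belonging to $\overrightarrow{\rm Conf}_k(\vec G,r,s)$,
the inequality $d_{\vec G}(v_{i-1},v_i)\le 2s\le m_0$ places the pair
within the range where the local isometry property of
Definition~\ref{def-2} applies (the upper bound is vacuous in the
embedding case), so the strong totally geodesic hypothesis yields
$d_{\vec G'}(\varphi(v_{i-1}),\varphi(v_i))=d_{\vec G}(v_{i-1},v_i)\in(2r,2s]$.
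This simultaneously establishes (i) and (ii); injectivity at the level
of tuples then follows by chaining adjacencies. Item (iii) is
tautological since reversal commutes with coordinate-wise application
of $\varphi$, while (iv) is immediate. The graph case is handled by the
same argument with $d_{\vec G},d_{\vec G'}$ replaced by $d_G,d_{G'}$.

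Linearly extending $\varphi_*$ produces a double-persistent
$\mathbb{Z}_2$-equivariant $R$-module homomorphism
$\varphi_\#:\mathcal{D}(\vec G,-,-)\to\mathcal{D}(\vec G',-,-)$; since
it is coordinate-wise, it commutes with the path boundary $\partial$
recalled in paragraph (b) of the introduction. By the functoriality of
the ${\rm Inf}/{\rm Sup}$ constructions (see~\cite{h1} and the
discussion preceding the theorem), $\varphi_\#$ restricts to a
double-persistent chain map
${\rm Inf}(\mathcal{D}(\vec G,-,-))\to{\rm Inf}(\mathcal{D}(\vec G',-,-))$;
taking homology and invoking the quasi-isomorphism
${\rm Inf}\simeq{\rm Sup}$ delivers the desired
$\mathbb{Z}_2$-equivariant homomorphism
$H_*(\mathcal{D}(\vec G,-,-))\to H_*(\mathcal{D}(\vec G',-,-))$.

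The main obstacle I anticipate is the final step, verifying that
$\varphi_\#$ really does descend to ${\rm Inf}$ uniformly across both
persistence parameters: one must show that if $\sigma$ satisfies
$\partial^j\sigma\in\mathcal{D}(\vec G,r,s)$ for all $j$, then
$\varphi_\#(\sigma)$ satisfies the analogous condition in $\vec G'$ at
every admissible bi-index, and that these restricted maps commute with
all persistence inclusions. Both assertions reduce to the commutation
$\varphi_\#\partial=\partial\varphi_\#$ together with the isometry on
adjacent pairs already established, but the diagram chase over two
nested persistence directions must be spelled out carefully in this
double-persistent setting.
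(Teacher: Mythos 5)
Your proposal reproduces the paper's route step for step: coordinate-wise application of $\varphi$ gives the map $\Phi_k(-,-)$ on the path configuration spaces (Proposition~\ref{le-path.a1q}), $\mathbb{Z}_2$-equivariance with respect to path reversal is tautological, Lemma~\ref{le-12-q} linearizes $\Phi_k$ to a $\mathbb{Z}_2$-equivariant map of the $\mathcal{D}$-modules, and the ${\rm Inf}/{\rm Sup}$ functoriality of Lemma~\ref{le-pre-1} then produces the chain maps of Proposition~\ref{pr-chain-2} and the homology homomorphism of Corollary~\ref{co-apr-chain-2}. The key observation --- that a tuple in $\overrightarrow{\rm Conf}_k$ constrains only adjacent coordinate distances, so the totally geodesic hypothesis can be applied pairwise to get well-definedness --- is precisely the device the paper relies on.

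Two caveats. First, ``injectivity at the level of tuples then follows by chaining adjacencies'' is not an argument: a strong totally geodesic immersion need not be injective on vertices (Example~\ref{ex-3.5} sends all even-indexed vertices of $\vec Z$ to the single vertex $u_0$ of $\vec I_2$), and since a tuple in $\overrightarrow{\rm Conf}_k$ bounds only the \emph{adjacent} coordinate distances, nothing forces two distinct tuples with the same image to lie within the radius $m_0$ where the local isometry can separate them; likewise the full product metric $(d_{\vec G})^k$ between \emph{different} configurations $(u_1,\ldots,u_k)$ and $(v_1,\ldots,v_k)$ is not manifestly preserved, since $d_{\vec G}(u_i,v_i)$ is not constrained to be $\leq m_0$. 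The paper's own proof of Proposition~\ref{le-path.a1q} declares this part ``analogous'' to Proposition~\ref{le-9.a1q} and does not confront the weakened constraint either, so you inherit an imprecision already present in the source; it should be flagged rather than asserted. Second, the obstacle you anticipate at the end is the unproblematic part: once $\varphi_\#$ commutes with $\partial$ and sends $\mathcal{D}_k(\vec G,n/2,m/2)$ into $\mathcal{D}_k(\vec G',n/2,m/2)$ for each admissible $(n,m)$, Lemma~\ref{le-pre-1} makes the descent to ${\rm Inf}$ and ${\rm Sup}$ automatic, and compatibility with the persistence inclusions follows from the same commuting inclusion-squares already set up for the configuration spaces (Lemma~\ref{le-25apr1}); no separate double-persistent diagram chase is required.
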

   
    \begin{remark}
     A   similar  statement  is   satisfied  by  substituting  $\vec  G$
  and  $\vec  G'$  with  $G$  and  $G'$  respectively throughout  Theorem~\ref{th-main-intro-4}.   
   \end{remark}

In  Section~\ref{s-5}, 
we  apply  Theorem~\ref{th-main-intro-1}  and   Theorem~\ref{th-main-intro-2}  
to give  some consequences  about  the Shannon  capacities.  
By  the  proof  of  Theorem~\ref{th-main-intro-1},  we  obtain 
  that  the  Shannon  capacity  of  the  underlying  graph  is  
smaller than  or  equal to the  Shannon  capacity  of  the  digraph
(see   Proposition~\ref{pr-25-apr-12-sha5}).   
By  the  proof  of   Theorem~\ref{th-main-intro-2},   
We  obtain  that   for  a   strong  totally  geodesic  immersion  or   
a   strong  totally  geodesic   embedding    
  of  (di)graphs,  
the  Shannon capacity  of  the  immersed  or  embedded  (di)graph  
is  smaller  than  or  equal  to  the  Shannon capacity  of  the  ambient  (di)graph
(see    Proposition~\ref{pr-25-apr-shan-98}).

\section{Digraphs  and  their  distances}\label{ssa.2}

In  this  section,  we  review  the  definitions  of  digraphs, their  underlying  graphs, 
and  the  canonical  distances  on  (di)graphs.  
We  discuss   the  strong  totally  geodesic    immersions  and   the  strong  totally  geodesic embeddings  of  (di)graphs.

\begin{definition}(cf.  \cite[pp. 2  - 4]{digraphs})\label{def-dig-25-apr-1}
Let  $V$  be  a  discrete  set.   
A  {\it  digraph}  $\vec  G=(V_{\vec G},    E_{\vec G})$  on  $V$   is  a  pair  such  that  
$V_{\vec G}$  is  a  subset  of  $V$  and  $   E_{\vec G}$  is   set  of   ordered  pairs  
of  distinct  vertices  in  $V_{\vec G}$.  
 The  elements  of  $V_{\vec G}$  are  {\it  vertices}  of  ${\vec G}$.  
 The  elements  of  $E_{\vec G}$  are  {\it  arcs}  of  ${\vec G}$,  denoted  by  $(u,v)$
 or  $u\to  v$.  
 Two vertices  $u$  and  $v$  in   $\vec  G$ are  {\it adjacent}  if  there  is  a  directed  edge $u\to  v$  
 or  a  directed  edge  $v\to  u$  in  $\vec  G$.  
 \end{definition}

 Let  $\vec  G=(V_{\vec G},    E_{\vec G})$  be  a  digraph.  
  Let  $n\in \mathbb{N}$. 
  
  \begin{definition} (cf.  \cite[Sec. 2]{lin3}  and  \cite{lin2,lin6})\label{def--2}
   An  {\it     elementary   $n$-path}  $\gamma_n$  on  $V$  is  a  sequence  $v_0v_1\ldots  v_n$  
  such that  $v_i\in  V$  for  each  $0\leq  i\leq  n$.    We  call  $n$  the  {\it  length}  of  $\gamma_n$.   
  In  addition,  if  $v_{j-1}\neq  v_j$  for  each  $1\leq  j\leq  n$,  then     $\gamma_n$ 
   is  called  {\it  regular};  otherwise  $\gamma_n$  is  called  {\it   non-regular}.  
  An  {\it   allowed  elementary   $n$-path}  $\gamma_n$  on  $\vec  G$  is  an    
   elementary   $n$-path   $v_0v_1\ldots  v_n$   on  
   $   V_{\vec  G}$    such that   
    either  $(v_{j-1}, v_j)\in  E_{\vec  G}$  or  $v_{j-1}=v_j$  for  each  $1\leq  j\leq  n$.  
   If  $u=v_0$  and  $v=v_n$,  
   then  we  say  that  $\gamma_n$  is  from  $u$  to  $v$.  
  \end{definition}

  \begin{definition}(cf.  \cite[Chap. 3]{digraphs})\label{def--3}
    The  {\it   distance}  on  $\vec  G$  is  a     function  
    $d_{\vec  G}:  V_{\vec  G} \times  V_{\vec  G}\longrightarrow   \mathbb{N}\cup \{\infty\} $
   such that  
   $d_{\vec  G}(u,v)$  is  the  smallest  length  of   allowed  elementary  paths  on  $\vec  G$   from  $u$  to  $v$
   or  from  $v$  to  $u$,  or  equivalently,  the  smallest  length  of  regular  allowed   elementary  paths  on  $\vec  G$   from  $u$  to  $v$
   or  from  $v$  to  $u$.  
   If  there  does  not  exist  any  (regular)  allowed  elementary    path  on  $\vec  G$  from  $u$  to  $v$  nor    from  $v$  to  $u$,  
   then  we  set  $d_{\vec  G}(u,v)=\infty$. 
  \end{definition}

 The  equivalence  relation  $(u,v)\sim  (v,u)$  on  $V \times  V $
 for   any  $u,v\in  V $   gives  a  projection    
   $ \pi:  V\times  V\longrightarrow  V\times  V/\sim$.

       \begin{definition}(cf.  \cite[p. 20]{digraphs})
The  {\it  underlying  graph}   $\pi(\vec  G)$   of  $\vec  G$  is  a  graph  $G=(V_G,E_G)$  
such  that  $V_G=V_{\vec  G}$  and  $E_G= \pi(E_{\vec  G})$.   
The  elements  of  $E_{ G}$  are  {\it   edges}  of  ${  G}$,  which  are  sets  of  two  vertices  
 of  the  form
$\{u,v\}$.      
Two  vertices  $u$  and  $v$  in  $G$  are  {\it adjacent}  if  $\{u,v\}$  is  an  edge  of  $G$.  
\end{definition}

Let  $G$  be  the  underlying  graph  of  $\vec  G$.  
The    pre-image  $\pi^{-1}(G)$  is  a  digraph 
    such  that  $(u,v)\in  E_{\pi^{-1}(G)}$  iff  $(v,u)\in  E_{\pi^{-1}(G)}$ iff  $\{u,v\}\in  E_{G}$.  
An  allowed  elementary  $n$-path   $\gamma_n$  on  $\pi^{-1}(G)$  (cf.  Definition~\ref{def--2}), 
which  will  also  be  called  an  {\it      allowed  elementary  $n$-path}  on  $G$,  
    is  a  sequence  $v_0v_1\ldots  v_n$  
  such that  $v_i\in  V_{   G}$  for  each  $0\leq  i\leq  n$  
  and  either  $\{v_{j-1}, v_j\}\in  E_{   G}$  of  $v_{j-1}=v_j$   for  each  $1\leq  j\leq  n$.  
The  distance  on  $\pi^{-1}(G)$  (cf.  Definition~\ref{def--3}),  
 which  will   be  called  the  {\it  distance}  on  $G$,    
 is  a  function   
    $d_{ G}:  V_{   G} \times  V_{   G}\longrightarrow   \mathbb{N}\cup \{\infty\} $  
   such that  
   $d_{  G}(u,v)$  is  the  smallest  length  of  (regular)  allowed  elementary    paths  on  $   G$   from  $u$  to  $v$.

\begin{lemma}\label{le-2.1}
   For  any  digraph  $\vec  G$  with   its  underlying  graph  $G$   and  any  $u,v\in  V_{  G}$, 
   we  have   
   \begin{eqnarray}\label{eq--2.a1}
   d_{\vec  G}(u,v) \geq  d_G(u,v).  
   \end{eqnarray}
   Moreover,  if  $\vec  G=\pi^{-1}(G)$,  then  the  equality  of  (\ref{eq--2.a1})  is  satisfied 
    for  any  $u,v\in  V_{  G}$.  
          \end{lemma}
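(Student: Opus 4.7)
The plan is to unwind the definitions and show that every allowed elementary path on $\vec G$ descends, under the projection $\pi$, to an allowed elementary path on $G$ of the same length, while when $\vec G = \pi^{-1}(G)$ the reverse lifting is also available.

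First I would start with the inequality (\ref{eq--2.a1}). If $d_{\vec G}(u,v) = \infty$ there is nothing to prove, so assume it is finite and pick a regular allowed elementary path $\gamma_n = v_0 v_1 \ldots v_n$ on $\vec G$ of minimal length $n = d_{\vec G}(u,v)$ going either from $u$ to $v$ or from $v$ to $u$. For each $1 \leq j \leq n$, Definition~\ref{def--2} gives either $v_{j-1} = v_j$ or $(v_{j-1}, v_j) \in E_{\vec G}$; in the latter case $\{v_{j-1}, v_j\} = \pi((v_{j-1}, v_j)) \in E_G$ by the definition of the underlying graph. Hence the same sequence $v_0 v_1 \ldots v_n$ is an allowed elementary $n$-path on $G$ from $u$ to $v$ (or from $v$ to $u$, which by symmetry of $d_G$ is the same data). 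Therefore $d_G(u,v) \leq n = d_{\vec G}(u,v)$, which is the claim.

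Next I would handle the equality when $\vec G = \pi^{-1}(G)$. By the defining property of $\pi^{-1}(G)$ stated just before the lemma, $(u,v) \in E_{\pi^{-1}(G)}$ iff $\{u,v\} \in E_G$. Given a minimal allowed elementary path $w_0 w_1 \ldots w_m$ on $G$ from $u$ to $v$ with $m = d_G(u,v)$, for each step either $w_{j-1} = w_j$ or $\{w_{j-1}, w_j\} \in E_G$, and in the latter case $(w_{j-1}, w_j) \in E_{\vec G}$. So the same sequence is an allowed elementary $m$-path on $\vec G$ from $u$ to $v$, yielding $d_{\vec G}(u,v) \leq m = d_G(u,v)$. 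Combined with (\ref{eq--2.a1}) this gives equality.

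There is essentially no obstacle: the argument is a direct translation between the path notions in $\vec G$ and in $G$ via $\pi$, and the only mild subtlety is that $d_{\vec G}$ is defined by minimizing over paths going from $u$ to $v$ \emph{or} from $v$ to $u$, which ensures that the relation $d_{\vec G}(u,v) = d_{\vec G}(v,u)$ holds and matches the symmetric distance $d_G$ on the undirected side.
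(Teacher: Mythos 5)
Your proof is correct and follows essentially the same route as the paper: project an allowed elementary path on $\vec G$ to one on $G$ of the same length for the inequality, and observe that when $\vec G = \pi^{-1}(G)$ paths on $G$ lift back to $\vec G$, giving equality. The paper states this more tersely (as an if-and-only-if on paths) while you spell out the stepwise verification, but the argument is the same.
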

          
          \begin{proof}
           Let  $\gamma $  be  any  (regular)   allowed  elementary      path  on  $\vec  G$  from  $u$  to  $v$  or  from  $v$  to  $u$.  
   Then  $\gamma $  is  a    (regular)     allowed  elementary  path  on  $G$  from  $u$  to  $v$.  
   We  obtain  (\ref{eq--2.a1}).  
   Suppose  in  addition  $\vec  G=\pi^{-1}(G)$.  
   Then  $\gamma $  is   a  (regular)     allowed  elementary   path  on  $\vec  G$  from  $u$  to  $v$  or  from  $v$  to  $u$   iff   
    $\gamma $  is  a  (regular)     allowed  elementary  path  on  $G$  from  $u$  to  $v$.  
    Thus the  equality  of  (\ref{eq--2.a1})  is  satisfied.  
          \end{proof}
          
          The  next  example  shows  that   the  condition  
          $\vec  G=\pi^{-1}(G)$  is  not  necessary  for   the  equality  of  (\ref{eq--2.a1}).  
          
          \begin{example}
          Let  $G$  be  the  complete  graph $K_n$  on  $n$  vertices. 
          Let $\vec  G$  be  any  digraph  such  that  its  underlying graph  is  $G$.  
          Then  for  any  distinct  two  vertices  $u,v\in  V_G$,  
          we  have  $d_{\vec  G}(u,v)=d_{G}(u,v)=1$.  
          \end{example}

    \begin{definition}(cf.  \cite[Definition~2.2]{lin3})  \label{def-1}
   Let  $\vec  G$  and  $\vec  G'$  be  digraphs.  
   A  {\it  morphism}  of  digraphs  $\varphi:  \vec  G\longrightarrow  \vec  G'$  
   is  a  map  $\varphi:  V_{\vec  G}\longrightarrow  V_{\vec G'}$  such  that  
   for  any  $(u,v)\in  E_{\vec  G}$,  
   either    $(\varphi(u),\varphi(v))\in  E_{\vec  G'}$  or  $\varphi(u)=\varphi(v)$.  
   In  particular,  let  $\vec  G=\vec  G'$.  An  {\it  automorphism}  of  $\vec  G$ 
   is  an  invertible  morphism  of  digraphs  $\varphi$  from  $\vec  G$  to  itself  
   such  that  its  inverse  is   also  a  morphism  of  digraphs.  
     \end{definition}

         Recall  that  graphs  are  $1$-dimensional  simplicial  complexes.  
       For  any  graphs  $   G$  and  $ G'$,     
   a  {\it  morphism}  of   graphs  $\varphi:    G\longrightarrow     G'$  
   is  a  simplicial  map,  i.e.  
     a  map  $\varphi:  V_{   G}\longrightarrow  V_{  G'}$  such  that  
   for  any  $\{u,v\}\in  E_{   G}$,  
   either   $\{\varphi(u),\varphi(v)\}\in  E_{   G'}$  or $\varphi(u)=\varphi(v)$. 
In  particular,  let  $   G=   G'$.  An  {\it  automorphism}  of  $  G$ 
   is  an  invertible  morphism  of   graphs  $\varphi$  from  $  G$  to  itself  
   such  that  its  inverse  is   also  a  morphism  of   graphs.

    \begin{lemma}\label{le-3.3a}
    Let  $\varphi:  \vec  G\longrightarrow  \vec  G'$  be  a  morphism  of  digraphs.  
     Let  $G$  and  $G'$   be  the  underlying  graphs  of   $\vec  G$  and  $\vec  G'$  respectively.  
     Then we  have  an  induced  morphism  of  graphs  
     $\varphi:  G\longrightarrow  G'$.
          \end{lemma}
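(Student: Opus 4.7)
The plan is to take the same underlying set-theoretic map $\varphi: V_{\vec G} \to V_{\vec G'}$ and verify that it satisfies the definition of a graph morphism when we regard its source and target as the underlying graphs. Since by definition $V_G = V_{\vec G}$ and $V_{G'} = V_{\vec G'}$, the map on vertex sets is already in place, so the only thing to check is the condition on edges.

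First, I would pick an arbitrary edge $\{u,v\} \in E_G$. By the definition of the underlying graph, $E_G = \pi(E_{\vec G})$, so there exists either an arc $(u,v) \in E_{\vec G}$ or an arc $(v,u) \in E_{\vec G}$ (or both). Handling these two cases is symmetric, so suppose $(u,v) \in E_{\vec G}$. By the hypothesis that $\varphi$ is a morphism of digraphs, either $(\varphi(u), \varphi(v)) \in E_{\vec G'}$ or $\varphi(u) = \varphi(v)$.

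In the first subcase, applying the projection $\pi$ yields $\{\varphi(u), \varphi(v)\} \in \pi(E_{\vec G'}) = E_{G'}$, which is precisely the edge condition required by the definition of a graph morphism. In the second subcase, we already have $\varphi(u) = \varphi(v)$, which is the other admissible alternative. The case $(v,u) \in E_{\vec G}$ is identical after swapping $u$ and $v$, since edges in $G'$ are unordered pairs. Thus in every case the induced map satisfies the edge condition, proving that $\varphi: G \to G'$ is a morphism of graphs.

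I expect no significant obstacle here; the content of the lemma is essentially that forgetting direction is functorial. The only point worth being explicit about is the need to handle the two possible orientations of an edge in $E_G$, which follows from the description of $\pi$ and the symmetry of unordered pairs. No further machinery beyond Definitions~\ref{def-dig-25-apr-1} and~\ref{def-1} together with the description of the underlying graph is needed.
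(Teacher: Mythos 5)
Your proof is correct and follows essentially the same approach as the paper's: take the same map on vertices and verify the edge condition by lifting an edge $\{u,v\}\in E_G$ to an arc in $E_{\vec G}$ and applying the digraph morphism condition. You are simply more explicit about the case analysis than the paper's one-sentence proof, which is fine.
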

     \begin{proof}
     From  the  morphism  of  digraphs  $\varphi:  \vec  G\longrightarrow  \vec  G'$,  
     we  have  an  induced  morphism  of  graphs  
     $\varphi:  G\longrightarrow  G'$  sending  any  edge  $\{u,v\} \in  E_G$  to
       an  edge  $\{\varphi(u),\varphi(v)\}\in  E_{G'}$  if  $\varphi(u)\neq \varphi(v)$  
       and  to  a  vertex  $\varphi(u)=\varphi(v)$  of  $G'$  otherwise.        
        \end{proof}

       \begin{lemma}\label{le-2.91}
        For  any    morphism    $\varphi:  \vec  G\longrightarrow  \vec  G'$
        (resp.  $\varphi:     G\longrightarrow   G'$)  
         and
          any  $u,v\in  V_{\vec  G}$  (resp.    $u,v\in  V_{   G}$),  we  have 
     \begin{eqnarray}\label{eq-25-03-15-1}
     d_{\vec  G} (u,v)\geq  d_{\vec  G'}(\varphi(u),\varphi(v))  ~~~
          ({\rm  resp.}~     d_{   G} (u,v)\geq  d_{  G'}(\varphi(u),\varphi(v))). 
     \end{eqnarray}
     \end{lemma}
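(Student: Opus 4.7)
The plan is to prove the inequality by pushing forward a shortest path realizing the distance on the source (di)graph and checking that the image is an allowed elementary path of the same length on the target (di)graph, whence its length bounds the distance from above.

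First, I would dispose of the trivial case $d_{\vec G}(u,v)=\infty$, in which (\ref{eq-25-03-15-1}) holds vacuously. Assume $n:=d_{\vec G}(u,v)<\infty$ and pick, by Definition~\ref{def--3}, a regular allowed elementary $n$-path $\gamma=v_0v_1\ldots v_n$ on $\vec G$ running from $u$ to $v$ or from $v$ to $u$. Set $\varphi(\gamma):=\varphi(v_0)\varphi(v_1)\ldots \varphi(v_n)$. For each $1\le j\le n$, either $v_{j-1}=v_j$ (so $\varphi(v_{j-1})=\varphi(v_j)$) or $(v_{j-1},v_j)\in E_{\vec G}$; in the latter case the morphism condition of Definition~\ref{def-1} gives either $(\varphi(v_{j-1}),\varphi(v_j))\in E_{\vec G'}$ or $\varphi(v_{j-1})=\varphi(v_j)$. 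In every case the consecutive-vertex condition for an allowed elementary path on $\vec G'$ (Definition~\ref{def--2}) is met, so $\varphi(\gamma)$ is an allowed elementary $n$-path on $\vec G'$ from $\varphi(u)$ to $\varphi(v)$ or from $\varphi(v)$ to $\varphi(u)$. Taking the minimum over such paths yields $d_{\vec G'}(\varphi(u),\varphi(v))\le n=d_{\vec G}(u,v)$, which is the digraph inequality.

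For the graph version, I would argue the same way, replacing arcs $(v_{j-1},v_j)\in E_{\vec G}$ by edges $\{v_{j-1},v_j\}\in E_G$ and using the simplicial-map definition of morphism of graphs: the image of an edge is either an edge or a single vertex, so the pushforward of an allowed elementary path on $G$ is an allowed elementary path on $G'$ of the same length. Alternatively, one can apply the digraph statement to the induced morphism $\pi^{-1}(\varphi):\pi^{-1}(G)\to \pi^{-1}(G')$ furnished (implicitly) by Lemma~\ref{le-3.3a} and then invoke Lemma~\ref{le-2.1} to identify $d_{\pi^{-1}(G)}$ with $d_G$ and $d_{\pi^{-1}(G')}$ with $d_{G'}$.

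I do not foresee a real obstacle: the argument is a one-step pushforward, and the only technical point worth being careful about is that $\varphi(\gamma)$ may fail to be regular even when $\gamma$ is (consecutive vertices can collapse), but this is harmless since the distance in Definition~\ref{def--3} is equivalently computed over allowed or regular allowed paths, and an allowed path of length $n$ still witnesses the bound $d_{\vec G'}(\varphi(u),\varphi(v))\le n$.
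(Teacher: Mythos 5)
Your proof is correct and takes essentially the same approach as the paper's: push forward a minimal allowed elementary path under $\varphi$, observe via the morphism condition that the image is again an allowed elementary path of the same length on the target (di)graph, and conclude by minimality, noting (as the paper also does) that regularity need not be preserved but this is harmless. Your version is slightly more thorough — explicitly handling the $d=\infty$ case and the ``from $u$ to $v$ or from $v$ to $u$'' disjunction in the digraph distance — but the underlying argument is the same.
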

     \begin{proof}
     By  Lemma~\ref{le-3.3a},  any    morphism  of  digraphs  
     $\varphi:  \vec  G\longrightarrow  \vec  G'$
     induces  a  morphism of  the  underlying  graphs  $\varphi:  G\longrightarrow  G'$.  
     We  only  prove  (\ref{eq-25-03-15-1})  for  the  digraph  case.  
     Let  $\gamma$  be  an       allowed  elementary   $n$-path  $v_0v_1\ldots  v_n$     in  $\vec  G$  
     such  that  $u=v_0$  and  $v=v_n$.  
     Then    
      $\varphi(\gamma)$  is  an    allowed  elementary  $n$-path   in  $\vec  G'$  
       from  $\varphi(v)$  to $\varphi(u)$.     
       Note  that  the  regularity  of  $\gamma$  does  not  imply  the  regularity  of  
       $\varphi(\gamma)$  while  the     regularity  of  $\varphi(\gamma)$   implies  the  regularity  of  
       $\gamma$.   
      We  obtain  (\ref{eq-25-03-15-1}).  
     \end{proof}

     \begin{definition}\label{def-2}
     \begin{enumerate}[(1)]
     \item
     We  say  that  a  morphism  of  digraphs  
     $\varphi:  \vec  G\longrightarrow  \vec  G' $  
     is  a  {\it  strong   totally geodesic  embedding}  of  digraphs   if  
    \begin{eqnarray}\label{eq-def-7-1}
          d_{\vec  G} (u,v)= d_{\vec  G'}(\varphi(u),\varphi(v))
  \end{eqnarray}
       for  any  $u,v\in  V_{\vec  G}$
     and  say  that  $\varphi$  is  a  {\it  strong   totally geodesic  immersion} with radius $n/2$
      if  (\ref{eq-def-7-1})  is  satisfied 
         for  any  $u,v\in  V_{\vec  G}$
     such that   $d_{\vec  G}(u,v)\leq  n$
     \footnote[2]{The  meaning   of  strong  totally  geodesic  embedding   comes 
      from  ramifications  of   totally  geodesic  submanifolds.  
     The  reason   is   explained  in  detail  
     in  \cite[Section~5.1]{regular}. }; 
     \item
        We  say  that  a  morphism  of   graphs  
     $\varphi:    G\longrightarrow    G' $
    is  a  {\it  strong   totally geodesic  embedding}    of  graphs  if  
          \begin{eqnarray}\label{eq-def-7-2}
          d_{   G} (u,v)= d_{   G'}(\varphi(u),\varphi(v))
   \end{eqnarray}
    for  any  $u,v\in  V_{ G}$
     and  say  that  $\varphi$  is  a  {\it  strong   totally geodesic  immersion} with radius $n/2$
      if   (\ref{eq-def-7-2})  is  satisfied  
        for  any  $u,v\in  V_{   G}$
     such  that  $d_{   G}(u,v)\leq  n$.  
     \end{enumerate}
         \end{definition}

  Let   $r(\vec  G)$    and   $ r( G)$  
        be    values   in  $\mathbb{N}\cup \{\infty\}$  given by 
      \begin{eqnarray*} 
      r(\vec  G) &=& \frac{1}{2} \sup \big \{d_{\vec  G}(u,v)\mid  u,v\in V_{\vec  G}\big\},\\     
            r(   G) &=& \frac{1}{2} \sup \big \{d_{   G}(u,v)\mid  u,v\in V_{   G}\big\}.     
      \end{eqnarray*}

            \begin{proposition}\label{pr-2.b2}
            If  $\varphi:  \vec  G\longrightarrow  \vec  G' $  (resp.   $\varphi:     G\longrightarrow    G' $) 
  is  a   strong   totally geodesic  immersion  with radius 
  ${n}/{2}\geq  r(\vec  G)$  (resp.    ${n}/{2}\geq  r(   G)$),  
  then  $\varphi$  is  a  strong   totally geodesic   embedding.  
            \end{proposition}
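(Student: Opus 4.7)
The plan is to unpack the definition of the radius $r(\vec G)$ (resp.\ $r(G)$) and feed it directly into the definition of a strong totally geodesic immersion. The statement essentially asserts that if the immersion property is guaranteed up to a threshold $n/2$ that meets or exceeds the diameter of the source (di)graph, then no pair of vertices can escape that threshold, so the equality of distances must hold globally, which is exactly the definition of a strong totally geodesic embedding.

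Concretely, I would fix an arbitrary pair $u,v\in V_{\vec G}$ and invoke the definition
\[
r(\vec G)=\tfrac{1}{2}\sup\{d_{\vec G}(x,y)\mid x,y\in V_{\vec G}\}
\]
to conclude $d_{\vec G}(u,v)\leq 2r(\vec G)$. Combining this with the standing hypothesis $n/2\geq r(\vec G)$ yields $d_{\vec G}(u,v)\leq n$. Since $\varphi$ is a strong totally geodesic immersion with radius $n/2$, Definition~\ref{def-2}~(1) then gives $d_{\vec G}(u,v)=d_{\vec G'}(\varphi(u),\varphi(v))$. As $u,v$ were arbitrary, this is precisely the condition \eqref{eq-def-7-1}, so $\varphi$ is a strong totally geodesic embedding. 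The graph case is handled by the same argument with $d_{\vec G},d_{\vec G'}$ replaced by $d_G,d_{G'}$ and Definition~\ref{def-2}~(2) invoked in place of Definition~\ref{def-2}~(1).

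There is no genuine obstacle here; the proof is a one-line definitional chase once the bound $d_{\vec G}(u,v)\le 2r(\vec G)$ is noted. The only minor issue to acknowledge is the degenerate case $r(\vec G)=\infty$, which would force $n=\infty$ and render the immersion hypothesis vacuously global, again yielding the conclusion. I would therefore keep the write-up very short, essentially recording the two inequalities $d_{\vec G}(u,v)\le 2r(\vec G)\le n$ and applying the immersion property.
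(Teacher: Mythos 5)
Your argument matches the paper's proof essentially verbatim: both fix arbitrary $u,v$, observe $d_{\vec G}(u,v)\le 2r(\vec G)\le n$ from the definition of $r(\vec G)$, and then apply Definition~\ref{def-2} to upgrade the immersion to an embedding. The proposal is correct and takes the same route.
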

            
            \begin{proof}
         It  follows from      ${n}/{2}\geq  r(\vec  G)$   
           that  
   $d_{\vec  G}(u,v)\leq  n$  for  any  $u,v\in  V_{\vec  G}$.  
            Let
            $\varphi:  \vec  G\longrightarrow  \vec  G' $   be 
             a   strong   totally geodesic  immersion  with radius 
            ${n}/{2}\geq  r(\vec  G)$.  
            With  the  help  of   Definition~\ref{def-2},  
             we  have  (\ref{eq-def-7-1})    for  any  $u,v\in  V_{\vec  G}$.  
             Thus   $\varphi$  is  a  strong   totally geodesic   embedding.   
            \end{proof}

    A  digraph  
         is  {\it  path-connected} if  for  any two  vertices    there  exists 
        an  allowed  elementary  path    from  one  of  the  vertices  to  the  other.  
        A   graph  is   {\it  path-connected} if  it  is  path-connected  
        as  a  $1$-dimensional  simplicial  complex. 
        Note  that     $d_{\vec  G}$  (resp.  $d_G$)  has  finite  value  iff 
            $\vec  G$  (resp.  $G$)   is  path-connected.  
           By  Lemma~\ref{le-2.1},  
            if a digraph  $\vec  G$  is  path-connected,  then  its  underlying  
        graph  $G$  is  path-connected.  However,  the   converse  is  not  true.  
        
 \begin{proposition}
 \label{le-2.2}
 Suppose  $\vec  G$  (resp.  $G$)  is  path-connected.  
 If  $\varphi:  \vec  G\longrightarrow  \vec  G' $  
 (resp.   $\varphi:     G\longrightarrow    G' $) 
  is  a   strong   totally geodesic  immersion  with radius $n/2$  for   any  $n\in \mathbb{N}$,  
 then  $\varphi$  is  a  strong   totally geodesic  embedding.
  \end{proposition}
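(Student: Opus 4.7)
The plan is to reduce the statement directly to the definitions and use path-connectedness to select, for each pair of vertices, a finite radius within which the immersion hypothesis already forces distances to be preserved. Since the hypothesis is that $\varphi$ is a strong totally geodesic immersion of radius $n/2$ for \emph{every} $n \in \mathbb{N}$, we have a whole family of such conditions at our disposal, indexed by the radius.

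First I would fix an arbitrary pair $u,v \in V_{\vec G}$ (resp. $u,v \in V_G$). By the paragraph preceding the proposition, $\vec G$ (resp. $G$) being path-connected is equivalent to $d_{\vec G}$ (resp. $d_G$) taking values in $\mathbb{N}$ rather than $\mathbb{N} \cup \{\infty\}$. So I may set $n := d_{\vec G}(u,v) \in \mathbb{N}$ (resp. $n := d_G(u,v) \in \mathbb{N}$). Then $d_{\vec G}(u,v) \leq n$ (resp. $d_G(u,v) \leq n$) holds trivially.

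Next, invoke the hypothesis at this particular value of $n$: $\varphi$ is a strong totally geodesic immersion of radius $n/2$. By Definition~\ref{def-2}~(1) (resp. Definition~\ref{def-2}~(2)), the equality $d_{\vec G}(u,v) = d_{\vec G'}(\varphi(u),\varphi(v))$ (resp. $d_G(u,v) = d_{G'}(\varphi(u),\varphi(v))$) is automatic, because the pair $(u,v)$ satisfies the distance constraint by construction of $n$. Since $u$ and $v$ were arbitrary, the defining equality for a strong totally geodesic embedding holds on all of $V_{\vec G}$ (resp. $V_G$), which is precisely what is needed.

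There is no genuine obstacle here: the statement is a quantifier-swap made legitimate by the fact that path-connectedness forces the distance between any two vertices to be finite, so the uniform-in-$n$ immersion hypothesis can always be instantiated at a radius large enough to cover that specific pair. The only point to mention explicitly is this equivalence between path-connectedness and finiteness of the canonical distance, which has been recorded just before the proposition.
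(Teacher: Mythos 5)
Your proof is correct and follows essentially the same route as the paper: fix an arbitrary pair $u,v$, use path-connectedness to conclude $d_{\vec G}(u,v)<\infty$, pick $n$ with $d_{\vec G}(u,v)\le n$, and invoke the radius-$n/2$ immersion hypothesis. The only cosmetic difference is that the paper chooses $n$ strictly greater than $d_{\vec G}(u,v)$ while you take $n=d_{\vec G}(u,v)$; both satisfy the required inequality in Definition~\ref{def-2}, so the argument is identical in substance.
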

 
 \begin{proof}
Let   $u,v\in  V_{\vec  G}$. 
 Since   $\vec  G$     is  path-connected,  
 there  exists  an  allowed  elementary  path  on  $\vec G$  
 from  $u$ to  $v$  or  from  $v$  to  $u$.   
  Thus  $d_{\vec  G}(u,v)<\infty$. 
  Choose  $n\in \mathbb{N}$  such  that  $n >d_{\vec  G}(u,v)$.  
  Since  $\varphi:  \vec  G\longrightarrow  \vec  G' $   
  is  a   strong   totally geodesic  immersion  with radius $n/2$,  
  we  have    (\ref{eq-def-7-1}).  
   Thus   $\varphi$  is  a  strong   totally geodesic   embedding. 
 \end{proof}

\begin{proposition}
\label{co-2.cc1}
Let  $\vec  G$  be  a  digraph  with  underlying  graph  $G$.  
Let  ${\rm  Aut}(\vec  G)$  be  the  automorphism  group  of  $\vec   G$ 
and  let  ${\rm  Aut}(   G)$  be  the  automorphism  group  of  $   G$.  
Then  
\begin{enumerate}[(1)]
\item
${\rm  Aut}(\vec  G)$   is  a  subgroup  of   ${\rm  Aut}(   G)$;
\item
any  automorphism  of  $\vec  G$  is  strong    totally geodesic  with  respect  to  $d_{\vec  G}$;  
\item
any  automorphism  of  $ G$  is  strong    totally geodesic  with  respect  to  $d_{ G}$.   
\end{enumerate}   
\end{proposition}

\begin{proof}
(1)   By  Lemma~\ref{le-3.3a},   any  automorphism   of  $\vec   G$  
is  an  automorphism   of     $  G$.  
Thus  ${\rm  Aut}(\vec  G)$   is  a  subgroup  of   ${\rm  Aut}(   G)$,  both  of  which  
are  subgroups  of  the  permutation  group  of  the  vertices.

(2)    Let  $\varphi\in  {\rm  Aut}(\vec  G)$.      
 By  Lemma~\ref{le-2.91},   for  any  vertices   $u$  and  $v$,  we  have
$d_{\vec  G} (u,v)\geq  d_{\vec  G }(\varphi(u),\varphi(v))$.    
  Substituting   $\varphi$  with  $\varphi^{-1}$,  we  have  
  $d_{\vec  G} (\varphi(u),\varphi(v))\geq  d_{\vec  G }(u,v)$.  
  Thus     $  d_{\vec  G }(u,v)=d_{\vec  G} (\varphi(u),\varphi(v))$,  
  which  implies  that  $\varphi$  is   strong    totally geodesic  with  respect  to  $d_{\vec  G}$.

  (3)   The  proof  is  analogous with  (2).  
\end{proof}

      The  following  example  shows  that   a  strong   totally geodesic  immersion  of  (di)graphs  may  not  be  induced  by  any  injections  of  the  vertices.

       \begin{example}\label{ex-3.3}
      Let  $\vec {L}$  be  the  {\it  line digraph}  with  vertices  $v_k$    and  arcs  
      $(v_k,  v_{ k+1})$  for  all  $k\in \mathbb{Z}$.  
      Then   for  any  $p,q\in\mathbb{Z}$, 
       \begin{eqnarray*}
       d_{\vec {L}}(v_p,v_q)= |p-q|.   
       \end{eqnarray*}
      Let  $\vec {C}_r$  be the  {\it  cyclic  digraph}  with  vertices  $u_{[k]}$    and   arcs  
      $(u_{[k]},  u_{ [k+1]})$  
      for   all  $[k]\in \mathbb{Z}/r\mathbb{Z}$.   Then  for  any 
       $[p], [q]\in   \mathbb{Z}/r\mathbb{Z}$,  
       \begin{eqnarray*}
       d_{\vec {C}_r}(u_{[p]},u_{[q]})=  \min\big\{  |p_0-q_0|,   r- |p_0-q_0|    \big\}  
       \end{eqnarray*}
       where  $ 0\leq  p_0,q_0 < r$  are  the  representatives  
       of  the  residue  classes  $[p]$   and  $[q]$  respectively.   
      Let 
      $\varphi:  \vec {L}\longrightarrow  \vec {C}_r$  
      be  the   canonical  morphism  of  digraphs  sending  $v_k$  to  $u_{[k]}$  and  sending  
       $(v_k,  v_{ k+1})$  to  $(u_{[k]},  u_{ [k+1]})$  for  any  $k\in \mathbb{Z}$.  
       Then  
      for any     $|p-q|\leq  r/2$,  
      we  have  
      \begin{eqnarray*}
      d_{\vec {C}_r}(u_{[p]}, u_{[q]})=d_{\vec {L}}(v_p,v_q).  
      \end{eqnarray*}
       Thus   $\varphi$ 
      is  a  strong   totally geodesic  immersion  of  digraphs  with  radius  $n/2$  for  any  $n\leq  r/2$. 
    On  the  other  hand,  
        for any     $  r/2<  |p-q|$,
         we  have  
         \begin{eqnarray*}
         d_{\vec {C}_r}(u_{[p]}, u_{[q]})<  d_{\vec {L}}(v_p,v_q). 
         \end{eqnarray*}  
          Thus  $\varphi$ 
      is  not  a  strong   totally geodesic  immersion  of  digraphs  
      with  radius  $n/2$  for  any  $n>  r/2$.

      Let  $L$  and $C_r$  be  the  underlying  graphs  of  $\vec  L$  and  $\vec C_r$  respectively.  
      Note  that  $d_{L}=d_{\vec  L}$  and  $d_{C_r}=d_{\vec  C_r}$.  
      The induced  morphism  of  graphs  $\varphi:  L\longrightarrow  C_r$  is  a  covering  map
      of  graphs.  
      It   is  
        a  strong   totally geodesic  immersion  of  graphs   with  radius  $n/2$  for  any  $n\leq  r/2$
         is  not  a  strong   totally geodesic  immersion  of  graphs  with  radius  $n/2$
           for  any  $n>  r/2$.  
      \end{example}

    The  following  example   shows  that
        the  path-connected  condition  in   Proposition~\ref{le-2.2}  is   essential.

      \begin{example}\label{ex-3.5}
      Let  $\vec  Z$  be  the  zigzag  digraph  with  vertices  $v_k$  for  all  $k\in  \mathbb{Z}$  
      and  with  arcs   $(v_{2l}, v_{2l-1})$,  $(v_{2l},  v_{2l+1})$   for  all   $l\in  \mathbb{Z}$.  
      Then  for  any  $p,q\in  \mathbb{Z}$  with  $p\neq  q$,  
      \begin{eqnarray*}
      d_{\vec  Z}(v_p,  v_q) = 
      \begin{cases}
      1   &{\rm~if~}  |p-q|=1,\\
      \infty   &{\rm~otherwise}.
      \end{cases}
      \end{eqnarray*}
      Let  $\vec  I_2$  be  the  segment  digraph  with  
        two  vertices  $ u_0 $  and  $ u_1$  
      and     a  unique   arc   $(u_0, u_1)$.  
      Then  
      \begin{eqnarray*}
      d_{\vec  I_2} (u_0, u_1)=1.  
      \end{eqnarray*}
      Let  $\varphi:  \vec  Z\longrightarrow   \vec  I_2$  be  the   canonical  morphism  of  digraphs   
      sending  $v_{2l}$  to  $u_0$  and  sending  $v_{2l+1}$  to $u_1$  for  any  $l\in \mathbb{Z}$.  
      Then   
      $\varphi$  sends  both the  arcs  $(v_{2l},  v_{2l-1})$  and   $(v_{2l},  v_{2l+1})$ 
      to  $(u_0,u_1)$  for  any  $l\in \mathbb{Z}$.  
     For  any  $n\in  \mathbb{N}$,     $\varphi$  is  a  strong   totally geodesic  immersion 
      of  digraphs   with radius $n/2$.  
      Note that  $\vec  Z$  is  not  path-connected  and      
 $\varphi:  \vec  Z\longrightarrow   \vec  I_2$     
   is     not   a  strong   totally geodesic  embedding.

      Let  $Z$  and  $I_2$  be the  underlying  graphs  of   $\vec Z$  and  $\vec  I_2$  respectively.  
      Note  that  $Z= L$  and     $I_2=C_2$.   
          By  letting   $m=2$  in  the  second  paragraph  of   Example~\ref{ex-3.3},  
      we  have  that   $\varphi$  is  
        a  strong   totally geodesic  immersion  of  graphs   with  radius  $1/2$  
         is  not  a  strong   totally geodesic  immersion  of  graphs  with  radius  $n/2$
           for  any  $n>  1$.  
      \end{example}

   \section{Configuration  spaces  and  independence  complexes  for  digraphs}
   \label{ssa.3}
   
   In  this  section,  we  study   configuration  spaces  for  digraphs  and  the  underlying graphs.  
   In  Subsection~\ref{ssa-3.1},
   we  construct the independence  complexes  by  using  the  configuration  spaces  as  
   the  skeletons. 
   In  Subsection~\ref{ssa-3.2},  we  give  an   
  isometric  embedding  from  the  configuration  space  of  the  underlying  graph 
    into the configuration  space  of the digraph,  
  which  induces  a    simplicial  embedding  between  the  independence  complexes. 
   In  Subsection~\ref{ssa-3.3},  
   we  prove  that  a  strong  totally  geodesic  embedding  of  (di)graphs 
   will  induce     isometric  embeddings  between  the  configuration  spaces  and  
   consequently  induce embeddings  between   the  independence  complexes. 
   In  Subsection~\ref{s-7},  
   we  describe    geometric  realizations  of  the  independence  complexes  
   by    affinely  regular  embeddings  of  (di)graphs.  
   
   \subsection{The configuration  spaces  and  the  independence  complexes}
   \label{ssa-3.1}
   
   Let  $\vec  G$  be  a  digraph.  
   Then  $(V_{\vec  G},   d_{\vec  G})$  is  a  metric  space.  
   We  have  the  $k$-fold  product  metric  space  
  $((V_{\vec  G})^{k},  ( d_{\vec  G})^{k})$.  
  For  any  positive  integers   $k$  and  any   $1\leq   n<m\leq  \infty$,  
  consider   the  $k$-th      {\it  ordered  constraint   configuration  space}    
    \begin{eqnarray}\label{eq-79.a-a1}
  {\rm  Conf}_k(V_{\vec  G}, \frac{n}{2}, \frac{m}{2})=  \{(v_1,\ldots, v_k)\in  (V_{\vec G})^k\mid
    n<  d_{\vec  G}(v_i,v_j)\leq  m  {\rm~for~any~}  i\neq  j \}.          
  \end{eqnarray} 
  Note  that  (\ref{eq-79.a-a1})  is  a   subspace   of   
  $((V_{\vec  G})^{k},  ( d_{\vec  G})^{k})$. 
     The  symmetric group  $\Sigma_k$  acts  on    (\ref{eq-79.a-a1})    
  freely  
  by  permuting  the  coordinates 
  \begin{eqnarray}\label{eq-5.zxcv1}
 \sigma(v_1,\ldots,v_k) =  (v_{\sigma(1)},\ldots, v_{\sigma(k)}),~~~~~~\sigma\in\Sigma_k 
  \end{eqnarray}
  such  that  for  any $(u_1,\ldots,u_k)$  and  $(v_1,\ldots,v_k)$, 
  \begin{eqnarray}\label{eq-5.isom1}
 ( d_{\vec  G})^{k}(\sigma(u_1,\ldots,u_k), \sigma(v_1,\ldots,v_k)) =  ( d_{\vec  G})^{k}((u_1,\ldots,u_k), (v_1,\ldots,v_k)).  
  \end{eqnarray}
  Thus  the  $\Sigma_k$-action  (\ref{eq-5.zxcv1})  on     (\ref{eq-79.a-a1})     
   is  isometric.  
With  the  help  of      (\ref{eq-5.zxcv1}),
   we  define  the  $k$-th   {\it  unordered   constraint configuration  space}   to  be  the  orbit  space 
   \begin{eqnarray}\label{eq-uccs-2}
 {\rm  Conf}_k(V_{\vec  G}, \frac{n}{2}, \frac{m}{2})\Big/ \Sigma_k  = 
  \Big \{\{v_1,\ldots, v_k\}\in  2^{V_{\vec G}}~\Big |~   
   n< d_{\vec  G}(v_i,v_j)\leq   m  {\rm~for~any~}  i\neq  j  \Big \}.  
 \end{eqnarray}
    It  follows from  (\ref{eq-5.isom1})  that  
    there  is  an  induced  metric  $ ( d_{\vec  G})^{k}  /\Sigma_k$  on  
     (\ref{eq-uccs-2}).

    In  particular,  if  we  let  $m=\infty$  in (\ref{eq-79.a-a1})  and   (\ref{eq-uccs-2}),  
      then  they  give   the  $k$-th     ordered     configuration  space  of  hard  spheres   
      and  the  $k$-th  unordered     configuration  space  of  hard  spheres, 
       with  radius  $n/2$.

    \begin{lemma}\label{le-2503a1z}
  For  any  digraph  $\vec  G$  and  any  positive  integer  $k$,    
  we have  a  $\Sigma_k$-equivariant    double-filtration   
    \begin{eqnarray}\label{eq-e.1}
    {\rm  Conf}_k(V_{\vec  G}, -,-)=\Big\{{\rm  Conf}_k(V_{\vec  G}, \frac{n}{2}, \frac{m}{2})
    ~\Big|~  1\leq  n<m\leq  \infty \Big\}
    \end{eqnarray}
       such  that 
    \begin{eqnarray*}
    {\rm  Conf}_k(V_{\vec  G}, \frac{n_1}{2}, \frac{m}{2})  \supseteq 
     {\rm  Conf}_k(V_{\vec  G}, \frac{n_2}{2}, \frac{m}{2})
    \end{eqnarray*}
   is  an   isometric  embedding  for  any  $n_1<n_2<m$
    and 
        \begin{eqnarray*}
    {\rm  Conf}_k(V_{\vec  G}, \frac{n }{2}, \frac{m_1}{2})  \subseteq 
     {\rm  Conf}_k(V_{\vec  G}, \frac{n }{2}, \frac{m_2}{2})
    \end{eqnarray*}
      is  an   isometric  embedding  for  any  $n<m_1<m_2$. 
    \end{lemma}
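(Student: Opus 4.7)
The plan is to verify the three assertions of the lemma---the two monotone set-theoretic inclusions, the isometric embedding property, and the $\Sigma_k$-equivariance---by exploiting the fact that every ${\rm Conf}_k(V_{\vec G}, n/2, m/2)$ is defined in (\ref{eq-79.a-a1}) as a subspace of the single ambient product metric space $((V_{\vec G})^k, (d_{\vec G})^k)$, on which the $\Sigma_k$-action by coordinate permutations in (\ref{eq-5.zxcv1}) is already known to be isometric by (\ref{eq-5.isom1}).

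First I would establish the set-theoretic inclusions directly from the defining conditions in (\ref{eq-79.a-a1}). For $n_1 < n_2 < m$, any tuple $(v_1,\ldots,v_k)$ satisfying $n_2 < d_{\vec G}(v_i,v_j) \leq m$ for all $i \neq j$ automatically satisfies $n_1 < d_{\vec G}(v_i,v_j) \leq m$ since $n_1 < n_2$, yielding the containment ${\rm Conf}_k(V_{\vec G}, n_2/2, m/2) \subseteq {\rm Conf}_k(V_{\vec G}, n_1/2, m/2)$. The second inclusion, for $n < m_1 < m_2$, follows symmetrically: enlarging the upper bound from $m_1$ to $m_2$ only weakens the constraint, so every qualifying tuple remains qualifying, giving ${\rm Conf}_k(V_{\vec G}, n/2, m_1/2) \subseteq {\rm Conf}_k(V_{\vec G}, n/2, m_2/2)$.

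Next I would observe that both inclusions are automatically isometric. Since each ${\rm Conf}_k(V_{\vec G}, n/2, m/2)$ inherits the restriction of the same product metric $(d_{\vec G})^k$ from the common ambient space, an inclusion of one such subspace into another is tautologically distance-preserving, which is precisely the definition of an isometric embedding. For $\Sigma_k$-equivariance, the defining condition $n < d_{\vec G}(v_i,v_j) \leq m$ is symmetric in the indices $i$ and $j$, so each constraint configuration space is $\Sigma_k$-invariant under the action in (\ref{eq-5.zxcv1}); hence both inclusions commute with the $\Sigma_k$-action, and by (\ref{eq-5.isom1}) the metric is preserved by the group action as well.

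There is no genuine obstacle here; the lemma is essentially a bookkeeping statement recording that the family $\{{\rm Conf}_k(V_{\vec G}, n/2, m/2)\}_{1 \leq n < m \leq \infty}$ forms a well-defined bipersistent object in the category of $\Sigma_k$-equivariant metric spaces. The only subtlety---and it is minor---is to check monotonicity separately in each of the two filtration parameters, since $n$ and $m$ vary independently; but in both directions this reduces to the trivial observation that widening the constraint interval $(n,m]$ enlarges the set of qualifying tuples.
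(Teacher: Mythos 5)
Your proof is correct and follows essentially the same approach as the paper's: both observe that the inclusions come directly from monotonicity of the defining condition in (\ref{eq-79.a-a1}), that the isometry is automatic since all spaces carry the restriction of the ambient product metric $(d_{\vec G})^k$, and that $\Sigma_k$-equivariance follows from the action in (\ref{eq-5.zxcv1}) together with the symmetry of the constraint. You have simply spelled out the three steps that the paper's one-line proof asserts.
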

    
    \begin{proof}
    The   double-filtration   (\ref{eq-e.1})   follows  from  (\ref{eq-79.a-a1}).  
    The  embeddings  are  isometries  with  respect to  $ ( d_{\vec  G})^{k}$.  
    The  $\Sigma_k$-invariance  follows  from  (\ref{eq-5.zxcv1}).  
    \end{proof}
    
    \begin{corollary}\label{co-2503-bma1}
    For  any  digraph  $\vec  G$  and  any  positive  integer  $k$,    
  we have  a       double-filtration   
    \begin{eqnarray}\label{eq-e.2}
    {\rm  Conf}_k(V_{\vec  G}, -,-) /\Sigma_k=
    \Big\{{\rm  Conf}_k(V_{\vec  G}, \frac{n}{2}, \frac{m}{2})\Big/\Sigma_k    
    ~\Big|~  1\leq    n<m \leq  \infty\Big\}
    \end{eqnarray}
    such  that 
    \begin{eqnarray*}
    {\rm  Conf}_k(V_{\vec  G}, \frac{n_1}{2}, \frac{m}{2}) \Big/\Sigma_k \supseteq 
     {\rm  Conf}_k(V_{\vec  G}, \frac{n_2}{2}, \frac{m}{2})\Big/\Sigma_k
    \end{eqnarray*}
   is  an   isometric  embedding  for  any  $n_1<n_2<m$
    and 
        \begin{eqnarray*}
    {\rm  Conf}_k(V_{\vec  G}, \frac{n }{2}, \frac{m_1}{2})\Big/\Sigma_k  \subseteq 
     {\rm  Conf}_k(V_{\vec  G}, \frac{n }{2}, \frac{m_2}{2})\Big/\Sigma_k
    \end{eqnarray*}
      is  an   isometric  embedding  for  any  $n<m_1<m_2$.
    \end{corollary}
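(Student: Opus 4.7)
The plan is to deduce Corollary~\ref{co-2503-bma1} from Lemma~\ref{le-2503a1z} by passing to the orbit space under the $\Sigma_k$-action. Concretely, I would first record that each inclusion between ordered configuration spaces appearing in Lemma~\ref{le-2503a1z} is $\Sigma_k$-equivariant: if $(v_1,\ldots,v_k)$ lies in ${\rm Conf}_k(V_{\vec G},n_2/2,m/2)$, then so does $(v_{\sigma(1)},\ldots,v_{\sigma(k)})$ since the constraint $n_2 < d_{\vec G}(v_i,v_j) \leq m$ is symmetric in $i,j$. Consequently, the inclusions descend to well-defined maps on the quotients in (\ref{eq-e.2}), which are automatically injective because on each orbit the action of $\Sigma_k$ on the ambient space restricts to the same action on the $\Sigma_k$-invariant subspace.

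Next, I would verify that the induced maps are isometric with respect to the quotient metric $(d_{\vec G})^k/\Sigma_k$. Since the $\Sigma_k$-action on $(V_{\vec G})^k$ is isometric by (\ref{eq-5.isom1}) and free, the quotient metric on any $\Sigma_k$-invariant subspace $Y \subseteq (V_{\vec G})^k$ is given by
\begin{equation*}
\bar{d}\bigl([x],[y]\bigr) \;=\; \min_{\sigma \in \Sigma_k} (d_{\vec G})^k\bigl(x,\sigma\cdot y\bigr).
\end{equation*}
For an orbit contained in a $\Sigma_k$-invariant subspace $Y_2 \subseteq Y_1$ on which the inclusion $Y_2 \hookrightarrow Y_1$ is an isometry (as provided by Lemma~\ref{le-2503a1z}), the minimum defining $\bar d$ is taken over the same finite group and uses the same metric, so the value is unchanged. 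This shows that both the containment
\begin{equation*}
{\rm Conf}_k(V_{\vec G},n_1/2,m/2)/\Sigma_k \supseteq {\rm Conf}_k(V_{\vec G},n_2/2,m/2)/\Sigma_k
\end{equation*}
for $n_1 < n_2 < m$, and the analogous containment for $n < m_1 < m_2$, are isometric embeddings.

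Finally, to establish the double-filtration structure itself, I would simply transfer the containments of Lemma~\ref{le-2503a1z} through the quotient map, noting that inclusion of $\Sigma_k$-invariant subspaces induces inclusion of orbit spaces. The main thing to be careful about is the descent of the isometry property; the $\Sigma_k$-equivariance and isometry of the action guarantee this, but without them the induced map on quotients need not be distance-preserving. Since the corollary is essentially a functorial consequence of the lemma once one verifies this descent, I do not expect a substantial obstacle beyond recording these observations cleanly.
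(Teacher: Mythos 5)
Your proposal is correct and takes essentially the same route as the paper: pass to $\Sigma_k$-orbit spaces in the double-filtration of Lemma~\ref{le-2503a1z} and check that the isometric embeddings descend to the quotients. The paper states this in two sentences without elaboration, whereas you spell out the descent of the quotient metric explicitly; the underlying argument is the same.
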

    
    \begin{proof}
  Taking  the  $\Sigma_k$-orbit  spaces  in  the   double-filtration  
  (\ref{eq-e.1})  in  Lemma~\ref{le-2503a1z},    
    we  obtain  the   double-filtration   (\ref{eq-e.2}).  
       The  embeddings  are  isometries  with  respect to  $ ( d_{\vec  G})^{k}/\Sigma_k$.  
    \end{proof}

    \begin{corollary}
    \label{co-250328-cv}
    For  any  digraph  $\vec  G$  and  any  positive  integer  $k$, 
    we  have a    double-persistent  isometric    covering  map  
    \begin{eqnarray}\label{eq-co-0328-2}
    \pi_{\vec  G,k}(-,-):  ({\rm  Conf}_k(V_{\vec  G}, -,-),( d_{\vec  G})^{k})
    \longrightarrow  ({\rm  Conf}_k(V_{\vec  G}, -,-) /\Sigma_k,  ( d_{\vec  G})^{k}/\Sigma_k). 
    \end{eqnarray}
        \end{corollary}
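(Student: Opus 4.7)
The plan is to assemble the covering-map statement directly from the structure already set up in Lemma~\ref{le-2503a1z} and Corollary~\ref{co-2503-bma1}, since almost all of the work has been done there. For each pair $1\leq n<m\leq\infty$, the map $\pi_{\vec G,k}(n/2,m/2)$ is just the canonical orbit projection sending $(v_1,\ldots,v_k)$ to $\{v_1,\ldots,v_k\}$. I would first record that the constraint $d_{\vec G}(v_i,v_j)>n\geq 1$ for $i\neq j$ forces the coordinates of any point of ${\rm Conf}_k(V_{\vec G},n/2,m/2)$ to be pairwise distinct, so the $\Sigma_k$-action (\ref{eq-5.zxcv1}) is \emph{free} on this space.

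Next I would verify the covering-map property. Since $V_{\vec G}$ is a discrete set, every space of the form ${\rm Conf}_k(V_{\vec G},n/2,m/2)$ is discrete as a subspace of $(V_{\vec G})^k$, and hence so is its $\Sigma_k$-orbit space. Free actions of the finite group $\Sigma_k$ on a discrete space automatically yield $k!$-sheeted coverings: every fibre has exactly $k!$ points, each singleton $\{(v_1,\ldots,v_k)\}$ is an open set mapping bijectively onto the open set $\{\{v_1,\ldots,v_k\}\}$, and the evenly covered neighbourhood of an orbit is simply the disjoint union of the $k!$ singletons forming that orbit.

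For the isometric part, the metric on the orbit space is the induced quotient metric $(d_{\vec G})^k/\Sigma_k$, defined by $(d_{\vec G})^k/\Sigma_k([x],[y])=\min_{\sigma\in\Sigma_k}(d_{\vec G})^k(x,\sigma y)$, which is well defined because (\ref{eq-5.isom1}) says that the $\Sigma_k$-action is isometric. On any single-point neighbourhood of the total space the projection $\pi_{\vec G,k}(n/2,m/2)$ is trivially an isometry onto its image, so it is a local isometry and hence an isometric covering in the usual sense.

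Finally, double-persistence amounts to the commutativity of $\pi_{\vec G,k}$ with the inclusions along the two parameters $n$ and $m$. This is immediate: permuting coordinates commutes with the set-theoretic inclusions already described in Lemma~\ref{le-2503a1z} and Corollary~\ref{co-2503-bma1}, and those inclusions were already shown to be isometric embeddings, so we obtain a morphism of double-persistent isometric-embedding diagrams. I do not expect any serious obstacle here; the only conceptual point worth being explicit about is checking freeness of the $\Sigma_k$-action, which reduces to observing that the constraint $n\geq 1$ in (\ref{eq-79.a-a1}) already forces distinct coordinates.
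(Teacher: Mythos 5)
Your proposal is correct and follows essentially the same route as the paper: both rely on the freeness and isometry of the $\Sigma_k$-action to produce the isometric covering map at each pair of parameters, and then pass to double-persistence via the commuting inclusion diagrams from Lemma~\ref{le-2503a1z} and Corollary~\ref{co-2503-bma1}. Your version merely makes explicit some points the paper takes for granted (distinctness of coordinates forcing freeness, discreteness giving the covering property), which is harmless elaboration rather than a different argument.
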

    
    \begin{proof}
    Let  $1\leq  n<m\leq  \infty$. 
       Since  the  $\Sigma_k$-action  is  free  and  isometric  on  ${\rm  Conf}_k(V_{  G}, n/2,m/2)$,  
    we  have  an  isometric   covering  map  
    \begin{eqnarray}\label{eq-cover}
    \pi_{\vec  G, k}(\frac{n}{2},\frac{m}{2}):     {\rm  Conf}_k(V_{\vec  G}, \frac{n}{2},\frac{m}{2}) \longrightarrow 
        {\rm  Conf}_k(V_{\vec  G}, \frac{n}{2},\frac{m}{2})\Big/\Sigma_k.  
    \end{eqnarray}
     For  any  $1\leq  n_1\leq  n_2<m$,  the  diagram  commutes 
     \begin{eqnarray*}
    \xymatrix{
     {\rm  Conf}_k(V_{\vec  G}, \dfrac{n_2}{2}, \dfrac{m}{2}) 
      \ar[rr]^-{\pi_{\vec  G, k}(\frac{n_2}{2}, \frac{m}{2})} \ar[d] 
     &&{\rm  Conf}_k(V_{\vec  G}, \dfrac{n_2}{2}, \dfrac{m}{2})\Big/\Sigma_k\ar[d]\\
      {\rm  Conf}_k(V_{\vec  G}, \dfrac{n_1}{2}, \dfrac{m}{2})  
      \ar[rr]^-{\pi_{\vec  G, k}(\frac{n_1}{2}, \frac{m}{2})}  
     &&{\rm  Conf}_k(V_{\vec  G}, \dfrac{n_1}{2}, \dfrac{m}{2})\Big/\Sigma_k 
    }
    \end{eqnarray*}
    where  the  vertical  maps  are  canonical  inclusions.  
     For  any  $1\leq  n <m_1\leq  m_2$,  the  diagram  commutes 
     \begin{eqnarray*}
    \xymatrix{
     {\rm  Conf}_k(V_{\vec  G}, \dfrac{n}{2}, \dfrac{m_1}{2})  
     \ar[rr]^-{\pi_{\vec  G, k}(\frac{n}{2}, \frac{m_1}{2})} \ar[d] 
     &&{\rm  Conf}_k(V_{\vec  G}, \dfrac{n }{2}, \dfrac{m_1}{2})\Big/\Sigma_k\ar[d]\\
      {\rm  Conf}_k(V_{\vec  G}, \dfrac{n }{2}, \dfrac{m_2}{2})  
      \ar[rr]^-{\pi_{\vec  G, k}(\frac{n }{2}, \frac{m_2}{2})}  
     &&{\rm  Conf}_k(V_{\vec  G}, \dfrac{n }{2}, \dfrac{m_2}{2})\Big/\Sigma_k 
    }
    \end{eqnarray*}
    where  the  vertical  maps  are  canonical  inclusions.  
    Therefore,    taking  double-persistence  in  (\ref{eq-cover}),   we  obtain      (\ref{eq-co-0328-2}).  
    \end{proof}

    \begin{definition}\label{def-5.a1}
  We  define   the  {\it   constraint   independence  complex}      of  $\vec  G$  to  be  the  simplicial  complex    
  \begin{eqnarray}\label{eq-3.a1}
  {\rm  Ind}(\vec  G,  \frac{n}{2},  \frac{m}{2})=  \bigcup_{k\geq  1}  
  \Big( {\rm  Conf}_k(V_{\vec  G}, \frac{n}{2},  \frac{m}{2})\Big/ \Sigma_k\Big)   
  \end{eqnarray}
  such  that  for  any  $k \geq  1$,   
 the    $(k-1)$-simplices   of  (\ref{eq-3.a1})   are      given  by
    the    elements  of
      ${\rm  Conf}_k(V_{\vec  G}, n/2,   {m}/{2}) / \Sigma_k$.
  \end{definition}
  
  \begin{corollary}
  \label{le-7.012m}
  For  any  digraph  $\vec  G$, 
    we  have a     double-filtration   of     simplicial  complexes 
    \begin{eqnarray}\label{eq-09.h1}
    {\rm  Ind}(\vec  G, -,  -)= \Big\{  {\rm  Ind}(\vec  G,  \frac{n}{2},  \frac{m}{2}) 
   ~\Big|~  1\leq  n<m  \leq  \infty  \Big\}
    \end{eqnarray}
     such  that 
    \begin{eqnarray*}
     {\rm  Ind}(\vec  G,  \frac{n_1}{2},  \frac{m}{2})   \supseteq 
      {\rm  Ind}(\vec  G,  \frac{n_2}{2},  \frac{m}{2}) 
    \end{eqnarray*}
   is  an  embedding  of  simplicial  complexes  for  any  $n_1<n_2<m$
    and 
        \begin{eqnarray*}
     {\rm  Ind}(\vec  G,  \frac{n}{2},  \frac{m_1}{2})  \subseteq 
     {\rm  Ind}(\vec  G,  \frac{n}{2},  \frac{m_2}{2}) 
    \end{eqnarray*}
      is  an  embedding  of  simplicial  complexes  for  any  $n<m_1<m_2$. 
  \end{corollary}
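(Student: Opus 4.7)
The plan is to derive this corollary directly from Corollary~\ref{co-2503-bma1} together with the definition of ${\rm Ind}(\vec G, n/2, m/2)$ in Definition~\ref{def-5.a1}. The argument has two separable steps: first verify that the union in (\ref{eq-3.a1}) actually defines a simplicial complex, and then transport the two monotonicity statements for unordered constraint configuration spaces through that union.

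For the first step, I would check the face-closure property. A $(k-1)$-simplex of ${\rm Ind}(\vec G, n/2, m/2)$ is by definition an element $\{v_1,\ldots,v_k\} \in {\rm Conf}_k(V_{\vec G}, n/2, m/2)/\Sigma_k$, i.e.\ a $k$-element subset of $V_{\vec G}$ whose pairwise distances all lie in $(n, m]$. For any nonempty subset $\{v_{i_1},\ldots,v_{i_j}\}$ of this simplex, the pairwise distances are still among the same constrained set, so the subset lies in ${\rm Conf}_j(V_{\vec G}, n/2, m/2)/\Sigma_j$, i.e.\ it is a $(j-1)$-simplex of ${\rm Ind}(\vec G, n/2, m/2)$. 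Hence (\ref{eq-3.a1}) is genuinely a simplicial complex.

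For the second step, I would appeal to Corollary~\ref{co-2503-bma1} level by level. Fix $1 \leq n_1 < n_2 < m$. For every $k \geq 1$, Corollary~\ref{co-2503-bma1} gives an inclusion
\begin{eqnarray*}
{\rm Conf}_k(V_{\vec G}, \frac{n_2}{2}, \frac{m}{2})\Big/\Sigma_k \;\subseteq\; {\rm Conf}_k(V_{\vec G}, \frac{n_1}{2}, \frac{m}{2})\Big/\Sigma_k.
\end{eqnarray*}
Taking the union over $k \geq 1$ and using (\ref{eq-3.a1}), we obtain ${\rm Ind}(\vec G, n_2/2, m/2) \subseteq {\rm Ind}(\vec G, n_1/2, m/2)$ as a containment of sets of simplices. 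Since the simplices of each complex are precisely the orbits in the corresponding unordered configuration spaces, this containment is an embedding of simplicial complexes. The argument for $1 \leq n < m_1 < m_2$ is identical, using the second inclusion statement in Corollary~\ref{co-2503-bma1}.

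I do not anticipate a serious obstacle here, since all the work sits in Corollary~\ref{co-2503-bma1}; the only conceptual point to be careful about is that the union in (\ref{eq-3.a1}) is taken in the category of simplicial complexes (with $(k-1)$-simplices prescribed from the $k$-th unordered configuration space), so that the set-level inclusions indeed assemble into an embedding of simplicial complexes rather than a mere containment of underlying simplex sets.
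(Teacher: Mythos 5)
Your proposal is correct and takes essentially the same approach as the paper, which simply applies the definition in (\ref{eq-3.a1}) to the double-filtration (\ref{eq-e.2}) of Corollary~\ref{co-2503-bma1}. The only difference is that you explicitly verify the face-closure property guaranteeing that (\ref{eq-3.a1}) is in fact a simplicial complex, a point the paper leaves implicit in Definition~\ref{def-5.a1}.
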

  
  \begin{proof}
  Apply  (\ref{eq-3.a1})  to   (\ref{eq-e.2}).   We  obtain the  
  double-filtration  of     simplicial  complexes  
  (\ref{eq-09.h1}).  
  \end{proof}
  
  Let  $G$  be a  graph.  
  Then  $(V_G,  d_G)$  is  a  metric  space  and    
  $((V_{   G})^{k},  ( d_{   G})^{k})$  is  the $k$-fold  product  metric  space.  
    For any  positive  integer  $k$  and  any  $1\leq  n<m\leq  \infty$,    
    consider   the  $k$-th   {\it  ordered  constraint   configuration  space}   
    \begin{eqnarray}\label{eq-79.b-b1}
  {\rm  Conf}_k(V_{   G}, \frac{n}{2}, \frac{m}{2})=  \{(v_1,\ldots, v_k)\in  (V_{  G})^k\mid
    n<  d_{   G}(v_i,v_j)\leq  m  {\rm~for~any~}  i\neq  j \},        
  \end{eqnarray} 
    which  is  a  subspace  of   
  $((V_{  G})^{k},  ( d_{   G})^{k})$.    
  There  is   a  free  and   isometric  $\Sigma_k$-action  on   (\ref{eq-79.b-b1})
    by  permuting  the  coordinates.
    The  $k$-th  {\it  unordered   constraint configuration  space}  is  the  orbit  space 
   \begin{eqnarray*}
 {\rm  Conf}_k(V_{   G}, \frac{n}{2}, \frac{m}{2})\Big/ \Sigma_k  = 
  \Big \{\{v_1,\ldots, v_k\}\in  2^{V_{  G}}~\Big |~   
   n< d_{   G}(v_i,v_j)\leq   m  {\rm~for~any~}  i\neq  j  \Big \} 
 \end{eqnarray*}
   with   the  induced  metric  $ ( d_{  G})^{k}  /\Sigma_k$.

 \begin{lemma}\label{le-2503b2z}
  For  any   graph  $  G$  and  any  positive  integer  $k$,    
  we have  a  $\Sigma_k$-equivariant    double-filtration   
    \begin{eqnarray}\label{eq-2b-e.1}
    {\rm  Conf}_k(V_{ G}, -,-)=\Big\{{\rm  Conf}_k(V_{ G}, \frac{n}{2}, \frac{m}{2})
    ~\Big|~   1\leq   n<m \leq  \infty\Big\}
    \end{eqnarray}
    such that  the  inclusions  are  isometric  embeddings,  which      induces  a    double-filtration   
    \begin{eqnarray}\label{eq-2b-e.2}
    {\rm  Conf}_k(V_{ G}, -,-) /\Sigma_k=
    \Big\{{\rm  Conf}_k(V_{  G}, \frac{n}{2}, \frac{m}{2})\Big/\Sigma_k    
    ~\Big|~  1\leq   n<m\leq   \infty \Big\}
    \end{eqnarray} 
     such that  the  inclusions  are  isometric  embeddings.  
    \end{lemma}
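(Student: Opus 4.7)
The plan is to mimic the arguments used for the digraph versions, namely Lemma~\ref{le-2503a1z} and Corollary~\ref{co-2503-bma1}, replacing $\vec{G}$ by $G$ and $d_{\vec{G}}$ by $d_G$ throughout. Since $(V_G,d_G)$ is a metric space with the same formal properties as $(V_{\vec G}, d_{\vec G})$, every step transfers verbatim.

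First I would establish (\ref{eq-2b-e.1}) directly from the defining formula (\ref{eq-79.b-b1}): for $n_1<n_2<m$, the inequality $n_1<d_G(v_i,v_j)\le m$ is implied by $n_2<d_G(v_i,v_j)\le m$, giving the inclusion ${\rm Conf}_k(V_G,\tfrac{n_1}{2},\tfrac{m}{2})\supseteq {\rm Conf}_k(V_G,\tfrac{n_2}{2},\tfrac{m}{2})$; similarly for $n<m_1<m_2$, relaxing the upper bound gives ${\rm Conf}_k(V_G,\tfrac{n}{2},\tfrac{m_1}{2})\subseteq {\rm Conf}_k(V_G,\tfrac{n}{2},\tfrac{m_2}{2})$. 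Each inclusion is an isometric embedding because all the spaces inherit the same metric $(d_G)^k$ from the ambient product $(V_G)^k$, so restriction preserves distances.

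Next I would observe that permuting coordinates by $\sigma\in\Sigma_k$ leaves the mutual-distance condition $n<d_G(v_i,v_j)\le m$ invariant, so the $\Sigma_k$-action restricts to each ${\rm Conf}_k(V_G,\tfrac{n}{2},\tfrac{m}{2})$. This action is free (distinct coordinates are required since $n\ge 1$) and isometric with respect to $(d_G)^k$, exactly as in (\ref{eq-5.zxcv1}) and (\ref{eq-5.isom1}). Hence the inclusions are $\Sigma_k$-equivariant, establishing the first half of the lemma.

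Finally, to deduce (\ref{eq-2b-e.2}) I would pass to orbit spaces: since both filtration arrows in (\ref{eq-2b-e.1}) are $\Sigma_k$-equivariant isometric embeddings, they descend to inclusions of quotients, and the induced metric $(d_G)^k/\Sigma_k$ on the orbit space makes each descended inclusion an isometric embedding as well. There is no substantive obstacle here; the only minor point requiring care is checking that the induced quotient metric is genuinely well-defined and that the descended maps are isometries, but this is routine once we know the $\Sigma_k$-action is free and isometric, exactly as in the proof of Corollary~\ref{co-2503-bma1}.
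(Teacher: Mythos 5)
Your proposal is correct and follows exactly the route the paper takes: the paper's own proof consists of the single sentence that it is an analog of Lemma~\ref{le-2503a1z} and Corollary~\ref{co-2503-bma1}, and your write-up simply carries out that analogy in detail, replacing $d_{\vec G}$ by $d_G$. Nothing is missing and no step deviates from the intended argument.
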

    
    \begin{proof}
    The  proof  is    an  analog  of  Lemma~\ref{le-2503a1z} 
and  Corollary~\ref{co-2503-bma1}.  
    \end{proof}

    \begin{corollary}
    \label{co-250328-cv-a}
    For  any   graph  $  G$  and  any  positive  integer  $k$, 
    we  have a    double-persistent   isometric   covering  map  
    \begin{eqnarray}\label{eq-co-0328-22x}
    \pi_{ G,k}(-,-):  ({\rm  Conf}_k(V_{ G}, -,-),( d_{  G})^{k})
    \longrightarrow  ({\rm  Conf}_k(V_{  G}, -,-) /\Sigma_k,  ( d_{ G})^{k}/\Sigma_k). 
    \end{eqnarray}
        \end{corollary}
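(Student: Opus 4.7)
The plan is to mirror the argument of Corollary~\ref{co-250328-cv}, passing from digraphs to graphs. The core observation is that the $\Sigma_k$-action on ${\rm Conf}_k(V_G, n/2, m/2)$ by coordinate permutation is both free and isometric for every $1\leq n<m$: freeness follows because the constraint $n\geq 1 < d_G(v_i,v_j)$ forces the coordinates of every element of the configuration space to be pairwise distinct, while isometry is the exact analog of (\ref{eq-5.isom1}) with $(d_G)^k$ replacing $(d_{\vec G})^k$. These two facts are already packaged into Lemma~\ref{le-2503b2z}, which I would cite directly.

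First I would fix $1\leq n<m$ and conclude that the quotient map
\begin{eqnarray*}
\pi_{G,k}(n/2,m/2):{\rm Conf}_k(V_G,n/2,m/2)\longrightarrow{\rm Conf}_k(V_G,n/2,m/2)\big/\Sigma_k
\end{eqnarray*}
is an isometric covering map with respect to $(d_G)^k$ and the induced metric $(d_G)^k/\Sigma_k$. Next, I would check that these maps assemble into a double-persistent map by verifying two commutative squares of the same shape as in the proof of Corollary~\ref{co-250328-cv}: for any $1\leq n_1<n_2<m$, the canonical inclusion ${\rm Conf}_k(V_G, n_2/2,m/2)\hookrightarrow{\rm Conf}_k(V_G, n_1/2,m/2)$ is $\Sigma_k$-equivariant and descends to the inclusion of orbit spaces provided by Lemma~\ref{le-2503b2z}; for any $1\leq n<m_1<m_2$, the analogous square commutes for the same reason. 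Assembling these commutative squares over all admissible parameters $(n,m)$ yields the double-persistent isometric covering map (\ref{eq-co-0328-22x}).

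I do not anticipate any real obstacle: the argument is formally identical to Corollary~\ref{co-250328-cv}, since the only ingredients it uses, namely the metric $(d_G)^k$, the free and isometric $\Sigma_k$-action, and the double-filtration structure, have been set up for graphs in Lemma~\ref{le-2503b2z} in exact parallel with the digraph case. The only point worth flagging explicitly is the freeness check, which is immediate from $n\geq 1$; everything else is a diagram-chase.
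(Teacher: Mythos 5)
Your proposal is correct and takes essentially the same route as the paper: the paper's proof is a one-line reference to Lemma~\ref{le-2503b2z} together with the remark that the argument is an analog of Corollary~\ref{co-250328-cv}, and your write-up is precisely the expansion of that analogy (free and isometric $\Sigma_k$-action giving an isometric covering map for fixed parameters, then commuting inclusion squares yielding double-persistence). Your explicit note on why the action is free, namely that $n\geq 1< d_G(v_i,v_j)$ forces pairwise-distinct coordinates, is a detail the paper leaves implicit but is accurate and worth recording.
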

        
        \begin{proof}
        The  corollary  follows  from Lemma~\ref{le-2503b2z}. 
        The  proof  is  an  analog  of  Corollary~\ref{co-250328-cv}.    
        \end{proof}

    \begin{definition}\label{def-5.b2}
  We  define   the  {\it   constraint   independence  complex}      of  $  G$  to  be  the  simplicial  complex    
  \begin{eqnarray}\label{eq-3.b2}
  {\rm  Ind}(  G,  \frac{n}{2},  \frac{m}{2})=  \bigcup_{k\geq  1}  
  \Big( {\rm  Conf}_k(V_{   G}, \frac{n}{2},  \frac{m}{2})\Big/ \Sigma_k\Big)   
  \end{eqnarray}
  such  that  for  any  $k \geq  1$,   
 the    $(k-1)$-simplices   of  (\ref{eq-3.b2})  are      given  by
    the    elements  of
      ${\rm  Conf}_k(V_{  G}, n/2,   {m}/{2}) / \Sigma_k$.
  \end{definition}

     \begin{corollary}
  \label{le-7.2b-12m}
  For  any   graph  $  G$, 
    we  have a     double-filtration   of     simplicial  complexes 
    \begin{eqnarray}\label{eq-09.2b-h1}
    {\rm  Ind}( G, -,  -)= \Big\{  {\rm  Ind}(  G,  \frac{n}{2},  \frac{m}{2}) 
   ~\Big|~   1\leq   n<m\leq   \infty  \Big\}. 
    \end{eqnarray} 
  \end{corollary}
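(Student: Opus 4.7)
The statement is the graph analog of Corollary~\ref{le-7.012m}, and I would prove it by the same template, simply substituting the graph versions of the earlier lemmas. The plan is to start from the defining formula (\ref{eq-3.b2}) and apply the double-filtration (\ref{eq-2b-e.2}) from Lemma~\ref{le-2503b2z} to each $k$-stratum, then take the union over $k\geq 1$.

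Concretely, first I would fix pairs $(n_1,m)$ with $n_1<n_2<m$ and $(n,m_1)$ with $n<m_1<m_2$. By Lemma~\ref{le-2503b2z}, for every $k\geq 1$ we have inclusions
\begin{eqnarray*}
{\rm Conf}_k(V_G,\tfrac{n_2}{2},\tfrac{m}{2})/\Sigma_k &\subseteq& {\rm Conf}_k(V_G,\tfrac{n_1}{2},\tfrac{m}{2})/\Sigma_k,\\
{\rm Conf}_k(V_G,\tfrac{n}{2},\tfrac{m_1}{2})/\Sigma_k &\subseteq& {\rm Conf}_k(V_G,\tfrac{n}{2},\tfrac{m_2}{2})/\Sigma_k.
\end{eqnarray*}
Next I would invoke Definition~\ref{def-5.b2}, which identifies the elements of ${\rm Conf}_k(V_G,n/2,m/2)/\Sigma_k$ with the $(k-1)$-simplices of ${\rm Ind}(G,n/2,m/2)$. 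Since a set-theoretic inclusion of unordered configurations translates directly into an inclusion of simplex sets respecting face relations (any subset of an independent configuration is again an independent configuration with larger mutual distances), taking the union over $k\geq 1$ gives the desired simplicial inclusions between the ${\rm Ind}(G,\cdot,\cdot)$'s.

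Finally, letting $n,m$ range over $1\leq n<m\leq\infty$ assembles these inclusions into the double-filtration (\ref{eq-09.2b-h1}). There is no real obstacle here: everything is a direct transcription of the digraph argument used for Corollary~\ref{le-7.012m}, substituting $d_G$ for $d_{\vec G}$ throughout, so the only thing to check carefully is that the face relation in ${\rm Ind}(G,n/2,m/2)$ is preserved by the inclusions, which is immediate because an independent configuration restricted to a subset still has all pairwise distances in $(n,m]$.
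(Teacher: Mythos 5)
Your proposal matches the paper's argument: both start from the defining union (\ref{eq-3.b2}), invoke the double-filtration (\ref{eq-2b-e.2}) of Lemma~\ref{le-2503b2z} stratum-by-stratum in $k$, and assemble the inclusions, exactly as the paper does by analogy with Corollary~\ref{le-7.012m}. Your extra remark on preservation of face relations is a reasonable added check, but the route is the same.
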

  
  \begin{proof}
The  corollary  follows from (\ref{eq-2b-e.2})  and   is    an  analog  of    Corollary~\ref{le-7.012m}.    
  \end{proof}

Given  a  simplicial  complex  $\mathcal{K}$,  
An  {\it  automorphism}  of  $\mathcal{K}$  is  an  invertible   simplicial  map  
$\varphi:  \mathcal{K}\longrightarrow  \mathcal{K}$   
such  that  the  inverse  $\varphi^{-1}:  \mathcal{K}\longrightarrow  \mathcal{K}$ 
is  also  a  simplicial  map.  
The  collection  of  all  the  automorphisms  of  $\mathcal{K}$  is  a  group,
which  will be  called  the   {\it  automorphism  group}  of  $\mathcal{K}$  
and  denoted  by  ${\rm  Aut}(\mathcal{K})$.

\begin{corollary}\label{pr-05-05-22-1}
For  any  digraph  $\vec  G$   with  its   underlying  graph  $G$   and  any 
  $1\leq  n<m\leq \infty $,  we  have    canonical   group  homomorphisms 
\begin{eqnarray}\label{eq-250522-a1}
\alpha_{\vec  G}(\frac{n}{2},\frac{m}{2}): &&  {\rm  Aut}(\vec   G)\longrightarrow    {\rm  Aut}({\rm Ind}(\vec  G,\frac{n}{2},\frac{m}{2})), 
 \\
\alpha_{ G}(\frac{n}{2},\frac{m}{2}):  && {\rm  Aut}(   G)\longrightarrow    {\rm  Aut}({\rm Ind}(   G,\frac{n}{2},\frac{m}{2})).  
\label{eq-250522-a2}
\end{eqnarray}
\end{corollary}

\begin{proof}
Let   $\varphi\in  {\rm  Aut}(\vec  G)$.  
By   Proposition~\ref{co-2.cc1}~(2),  
 $\varphi$  induces  a  simplicial  map  from ${\rm  Ind}(\vec  G,  n/2,m/2)$  to  itself
   sending     
  a  simplex  $\{v_0, \ldots,v_n\}$     to  the  simplex  
$\{\varphi(v_0),\ldots, \varphi(v_n)\}$.    
Similarly,    
$\varphi^{-1}$    induces  a  simplicial  map  from ${\rm  Ind}(\vec  G,  n/2,m/2)$  to  itself 
 sending     
  a  simplex  $\{v_0, \ldots,v_n\}$    to  the  simplex  
$\{\varphi^{-1}(v_0),\ldots, \varphi^{-1}(v_n)\}$. 
 We  obtain  the  map  (\ref{eq-250522-a1}), 
   which  can  be  verified   directly  to  be     a  group  homomorphism.  
Similarly, we  have the  group  homomorphism    (\ref{eq-250522-a2}).  
\end{proof}

    \begin{example}\label{ex-0.8}
   Consider  the  line  digraph  $\vec  L$  (cf.  Example~\ref{ex-3.3}) 
    and  the  zigzag  digraph  $\vec  Z$  (cf.  Example~\ref{ex-3.5}).   
    The  underlying  graph  of  both  $\vec  L$  and  $\vec  Z$  is  the  line  graph  
    $L$   with  vertices  $v_k$    and  edges  
      $\{v_k,  v_{ k+1}\}$  for  all  $k\in \mathbb{Z}$.
      \begin{enumerate}[(1)]
      \item
      The   constraint  independence  complex  of  $\vec  L$  is  given  by  
      \begin{eqnarray*}
      {\rm  Ind}(\vec  L, \frac{n}{2},\frac{m}{2}) 
      =  \bigcup_{k=0}^\infty 
      \big\{\{v_{i_0},v_{i_1}, \ldots, v_{i_k}\}\mid
         n < i_{l}-i_{j} \leq  m{\rm~for~any~}   
      0\leq  j<l \leq  k \big\} 
      \end{eqnarray*}
      such  that  for  any  $n>1$  fixed,  
      \begin{eqnarray*}
      {\rm  Ind}(\vec  L, \frac{n}{2},\infty) 
      =\bigcup_{n<m<\infty }{\rm  Ind}(\vec  L, \frac{n}{2},\frac{m}{2});    
      \end{eqnarray*}
      \item
      The   constraint  independence  complex  of  $\vec  Z$  is  given  by  
      \begin{eqnarray*}
        {\rm  Ind}(\vec  Z, \frac{n}{2}, \frac{m}{2})= \emptyset 
      \end{eqnarray*}
      for  any  $n<m<\infty$  
      and 
      \begin{eqnarray*}
      {\rm  Ind}(\vec  Z, \frac{n}{2},\infty)=
        \bigcup_{k=0}^\infty 
        \big\{\{v_{i_0},v_{i_1}, \ldots, v_{i_k}\}\mid
           1 <i_{j}-i_{j-1}{\rm~for~any~}   
      1\leq  j\leq  k \big\},   
      \end{eqnarray*}
      which  does  not  depend   on  the  choice  of  $n\geq  1$;   
      \item
      The  constraint  independence  complex  of  $L$  is  given  by  
      \begin{eqnarray*}
      {\rm  Ind}( L, \frac{n}{2},\frac{m}{2})=   {\rm  Ind}(\vec  L, \frac{n}{2},\frac{m}{2}).      
      \end{eqnarray*}
      \end{enumerate}
   \end{example}

   \begin{example}
   Consider  the  cyclic  digraph  $\vec  C_r$ 
   with  its  underlying  graph  $C_r$   (cf.  Example~\ref{ex-3.3}).  
    Then  for  any  $n\geq  1$,    both  of  the  constraint  independence  complexes  
    ${\rm  Ind}(\vec  C_r,  {n}/{2},{m}/{2})$    and     $ {\rm  Ind}(   C_r,  {n}/{2},{m}/{2}) $ 
      are   given  by  
      \begin{eqnarray*}
          \bigcup_{k=0}^{[r/n]-1} \big\{\{v_{i_0},v_{i_1}, \ldots, v_{i_k}\}  &\mid &
          0\leq  i_0<\cdots  <i_k < r{\rm~such~that~}\\
       &&    n <\min\{ i_{l}-i_{j},   i_j+r-i_l \}  \leq  m\\
       && {\rm  for~any~}   
      0\leq  j<l \leq  k 
        \big\}.     
      \end{eqnarray*}
         In  particular,  
      \begin{enumerate}[(1)]
      \item
         if  $n>r/2$,  then  ${\rm  Ind}(\vec  C_r,  {n}/{2},{m}/{2})$    and     $ {\rm  Ind}(   C_r,  {n}/{2},{m}/{2}) $    are  of  dimension  zero  and  are  the  discrete  vertex  set  $\mathbb{Z}/r \mathbb{Z}$;    
    \item
    if  $m=n+1$,  then  ${\rm  Ind}(\vec  C_r,  {n}/{2},{m}/{2})$   
     and     $ {\rm  Ind}(   C_r,  {n}/{2},{m}/{2}) $    are  of  dimension  $0$,  $1$  or $2$. 
     Moreover,   they  are  of  dimension  $1$  if  
     $r=2(n+1)$   in  which  case   the $1$-simplices  are  antipodal  vertices;   and  
     they  are  of  dimension  $2$     
   if      $r=3(n+1)$  in  which  case  
   the  $2$-simplices  are  equilateral triangles  of  vertices.  
    \end{enumerate}
   \end{example}

    \subsection{Configuration  spaces  and  independence  complexes  for   digraphs
    and  their  underlying  graphs}
    \label{ssa-3.2}

   \begin{proposition}\label{pr-3.a1}
   For  any  digraph  $\vec  G$  with  its  underlying graph  $G$,  
   we  have  a    persistent       $\Sigma_k$-equivariant   isometric  
    embeddings  of   filtered  metric  spaces  
   \begin{eqnarray} \label{eq-3.ace19}
   i_{\vec  G,k}(-,\infty):  && ({\rm  Conf}_k(V_{  G}, -,\infty),   (d_{  G})^k) \longrightarrow 
    ( {\rm  Conf}_k(V_{\vec  G}, -,\infty),   (d_{\vec  G})^k),\\  
     j_{\vec  G,k}(\frac{1}{2},-):  && ({\rm  Conf}_k(V_{\vec  G}, \frac{1}{2},-),   (d_{  \vec
     G})^k) \longrightarrow 
    ( {\rm  Conf}_k(V_{  G}, \frac{1}{2},-),   (d_{   G})^k)
    \label{eq-3.ace1999}
        \end{eqnarray}
   such  that 
   \begin{enumerate}[(1)]
   \item
    $ i_{\vec  G,k}(1/2,\infty)= j_{\vec  G,k}(1/2,\infty)^{-1}$  is  the  identity  map,
   \item
         $ i_{\vec  G,k}(n/2,\infty)$ 
    is  an  inclusion  from  the  configuration space  of  $G$  to the  configuration  space  of  $\vec  G$
    for  any  $2\leq  n<   \infty$,
    \item
        $ j_{\vec  G,k}(1/2,n/2)$ 
    is  an  inclusion  from  the  configuration space  of  $\vec  G$  to the  configuration  space  of  $ G$ for  any  $2\leq  n<   \infty$. 
        \end{enumerate}
   \end{proposition}
   
   \begin{proof}
    By  (\ref{eq--2.a1}),     
   we  obtain  
   \begin{eqnarray}\label{eq-zlaoq}
    {\rm  Conf}_k(V_{  G}, \frac{n}{2},\infty)  & \subseteq &  {\rm  Conf}_k(V_{\vec  G}, \frac{n}{2},\infty),\\
    {\rm  Conf}_k(V_{  G}, \frac{1}{2},\frac{n}{2})   &\supseteq&   {\rm  Conf}_k(V_{\vec  G}, \frac{1}{2},\frac{n}{2})
   \end{eqnarray}
   for  any  $1\leq  n\leq    \infty$.
   Hence $ i_{\vec  G,k}(n/2,\infty)$ 
   as  well  as   $ i_{\vec  G,k}(1/2,n/2)$ 
    is  an  inclusion  for  any  $1\leq  n\leq  \infty$.  
       For  any  $n_1\leq  n_2$,
    the      diagram     commutes 
      \begin{eqnarray}\label{eq-3.a8}
   \xymatrix{
   {\rm  Conf}_k(V_{  G}, \dfrac{n_2}{2},\infty)\ar[rr]^-{i_{\vec  G,k}( \frac{n_2}{2},\infty)}  \ar[d]&&   {\rm  Conf}_k(V_{ \vec  G}, \dfrac{n_2}{2},\infty)
   \ar[d]\\
      {\rm  Conf}_k(V_{  G}, \dfrac{n_1}{2},\infty)\ar[rr]^-{i_{\vec  G,k}( \frac{n_1}{2},\infty)}   &&   {\rm  Conf}_k(V_{ \vec  G}, \dfrac{n_1}{2},\infty)
   }
   \end{eqnarray}
   where all  the  maps  in  (\ref{eq-3.a8})  are  canonical  inclusions  thus  are  $\Sigma_k$-equivariant;   and 
    the      diagram     commutes 
      \begin{eqnarray}\label{eq-3.a98}
   \xymatrix{
   {\rm  Conf}_k(V_{  G}, \dfrac{1 }{2},\dfrac{n_1}{2})\ar[rr]^-{i_{\vec  G,k}( \frac{1 }{2},\frac{n_1}{2})}  \ar[d]&&   {\rm  Conf}_k(V_{ \vec  G}, \dfrac{1}{2},\dfrac{n_1}{2})
   \ar[d]\\
      {\rm  Conf}_k(V_{  G}, \dfrac{1 }{2},\dfrac{n_2}{2})\ar[rr]^-{i_{\vec  G,k}( \frac{1 }{2},\frac{n_2}{2})}   &&   {\rm  Conf}_k(V_{ \vec  G}, \dfrac{1 }{2},\dfrac{n_2}{2})
   }
   \end{eqnarray}
   where all  the  maps  in  (\ref{eq-3.a98})  are  canonical  inclusions thus  are  $\Sigma_k$-equivariant.   
  By  (\ref{eq-3.a8})   and  (\ref{eq-3.a98}),
     we  have        persistent   $\Sigma_k$-equivariant  isometric  
     embeddings  of   filtered  metric  spaces
   (\ref{eq-3.ace19})  and  (\ref{eq-3.ace1999})  respectively.  
   Moreover, 
   two  vertices  are  adjacent  in  $\vec  G$  iff  they  are  adjacent  in  $G$.  Thus  
   \begin{eqnarray}\label{eq-mbzao1a}
     {\rm  Conf}_k(V_{  G}, \frac{1}{2},\infty) = {\rm  Conf}_k(V_{\vec  G}, \frac{1}{2},\infty). 
   \end{eqnarray}
   Hence $ i_{\vec  G,k}(1/2,\infty)$,  which  is  the  inverse  of  $ j_{\vec  G,k}(1/2,\infty)$,
     is  the  identity  map.   
      \end{proof}
   
     \begin{corollary}
    \label{le-99.01}
     For  any  digraph  $\vec  G$  with   its  underlying graph  $G$, 
the      double-persistent  isometric      covering  map   $  \pi_{\vec  G, k}(-,-)$  in   (\ref{eq-co-0328-2})
and  the      double-persistent  isometric     covering  map  $\pi_{  G, k}(-,-)$  in  (\ref{eq-co-0328-22x})
satisfy   the  commutative     diagrams
    \begin{eqnarray}\label{eq-diag-9871}
    \xymatrix{
    {\rm  Conf}_k(V_{  G}, -,\infty) \ar[rr]^-{ i_{ \vec  G,k}(-,\infty)}  \ar[d]_-{ \pi_{  G, k}(-,\infty)} 
    && {\rm  Conf}_k(V_{\vec  G}, -,\infty) \ar[d]^-{ \pi_{\vec  G, k}(-,\infty)} \\
     {\rm  Conf}_k(V_{  G}, -,\infty)/\Sigma_k  \ar[rr]^-{ i_{\vec  G,k}(-,\infty)/\Sigma_k} 
      && {\rm  Conf}_k(V_{\vec  G}, -,\infty)/\Sigma_k, 
    }
    \\
     \xymatrix{
    {\rm  Conf}_k(V_{\vec  G}, \frac{1}{2},-) \ar[rr]^-{ j_{\vec   G,k}(\frac{1}{2},-)}  \ar[d]_-{ \pi_{\vec  G, k}(\frac{1}{2},-)} 
    && {\rm  Conf}_k(V_{    G}, \frac{1}{2},-) \ar[d]^-{ \pi_{   G, k}(\frac{1}{2},-)} \\
     {\rm  Conf}_k(V_{ \vec  G}, \frac{1}{2},-)/\Sigma_k  \ar[rr]^-{ j_{\vec  G,k}(\frac{1}{2},-)/\Sigma_k} 
      && {\rm  Conf}_k(V_{  G}, \frac{1}{2},-)/\Sigma_k.  
    }\label{eq-diag-98722}
    \end{eqnarray}
    \end{corollary}
    
    \begin{proof}
   With  the  help  of   Proposition~\ref{pr-3.a1},  
    the   persistent  $\Sigma_k$-equivariant 
     isometric     embedding   (\ref{eq-3.ace19})  
   induces  a     persistent   isometric   embedding 
   \begin{eqnarray}\label{eq-3.0bma12}
 i_{\vec  G,k}(-,\infty)/\Sigma_k:   {\rm  Conf}_k(V_{  G}, -,\infty)/\Sigma_k  \longrightarrow  
        {\rm  Conf}_k(V_{\vec  G}, -,\infty)/\Sigma_k 
   \end{eqnarray}
   such  that  the  diagram  (\ref{eq-diag-9871})  commutes.  
   Similarly,   
    the   persistent  $\Sigma_k$-equivariant 
     isometric     embedding   (\ref{eq-3.ace1999})  
   induces  a     persistent   isometric   embedding 
   \begin{eqnarray}\label{eq-3.0bma1222}
 j_{\vec  G,k}(\frac{1}{2},-)/\Sigma_k:   {\rm  Conf}_k(V_{\vec   G}, \frac{1}{2},-)/\Sigma_k  \longrightarrow  
        {\rm  Conf}_k(V_{   G}, \frac{1}{2},-)/\Sigma_k 
   \end{eqnarray}
   such  that  the  diagram  (\ref{eq-diag-98722})  commutes.  
    \end{proof}
      
   \begin{proposition}\label{pr-3.v1}
   For  any  digraph  $\vec  G$   with   its  underlying graph  $G$,  
   we  have  
    persistent  simplicial  embeddings  
     \begin{eqnarray}\label{eq-3.b79}
i_{\vec  G}(-,\infty):   &&  {\rm  Ind}( {  G},  -,\infty)  \longrightarrow  {\rm  Ind}( {\vec  G}, -,\infty),\\
j_{\vec  G}(\frac{1}{2},-):  &&   {\rm  Ind}( { \vec G},  \frac{1}{2},-)  \longrightarrow 
 {\rm  Ind}( {  G}, \frac{1}{2},-)
 \label{eq-3.b7999}
   \end{eqnarray}
   such  that 
   \begin{enumerate}[(1)]
   \item
   $ i_{\vec  G}(1/2,  \infty)=j_{\vec  G}(1/2,  \infty)^{-1}$   is  the  identity  map,
      \item
         $ i_{\vec  G }(n/2,\infty)$ 
    is  a  simplicial  inclusion  from  the independence complex  of  $G$  to the  independence complex
     of  $\vec  G$
    for  any  $2\leq  n<   \infty$,
    \item
        $ j_{\vec  G }(1/2,n/2)$ 
    is  a  simplicial   inclusion  from  the independence complex   of  $\vec  G$  to the independence complex   of  $ G$ for  any  $2\leq  n<   \infty$. 
      \end{enumerate}
       \end{proposition}
   
   \begin{proof}
   We  have   the      persistent  simplicial  embedding     
   (\ref{eq-3.b79})  sending  a  $(k-1)$-simplex  of ${\rm  Ind}( {  G},  -,\infty)$ 
   identically  
   to  a  $(k-1)$-simplex  of  ${\rm  Ind}( {\vec  G}, -,\infty)$   by   (\ref{eq-3.0bma12}).
   Similarly,  we    have  the      persistent  simplicial  embedding     
   (\ref{eq-3.b7999})  sending  a  $(k-1)$-simplex  of ${\rm  Ind}( {\vec  G},   {1}/{2},-)$ 
   identically  
   to  a  $(k-1)$-simplex  of  ${\rm  Ind}( {   G}, {1}/{2},-)$   by   (\ref{eq-3.0bma1222}).   
    By  (\ref{eq-mbzao1a}),  $ i_{\vec  G}(1/2,\infty)$    
   By   Proposition~\ref{pr-3.a1}~(1), 
    $i_{\vec  G}(1/2,  \infty)=j_{\vec  G}(1/2,  \infty)^{-1}$       is  the  identity  map.  
     \end{proof}

   \begin{corollary}
   For  any  digraph  $\vec  G$   with   its  underlying graph  $G$,  
   (\ref{eq-3.b79})  and   (\ref{eq-3.b7999}) induce    persistent  homomorphisms  of 
    persistent  homology  
     \begin{eqnarray}\label{eq-3.c79}
i_{\vec  G}(-,\infty)_*:  &&  H_*( {\rm  Ind}( {  G},  -,\infty)) \longrightarrow  H_*({\rm  Ind}( {\vec  G},-, \infty)),\\
  j_{\vec  G}(1/2,-)_*:  &&  H_*( {\rm  Ind}( {\vec  G},  1/2,-)) \longrightarrow 
   H_*({\rm  Ind}( { G},1/2,-))
   \label{eq-3.c7999}
   \end{eqnarray}
  respectively such  that  $i_{\vec  G}(1/2,  \infty)_*=j_{\vec  G}(1/2,  \infty)_*^{-1}$   is  the  identity  map.  
   \end{corollary}
   
   \begin{proof}
   Applying  the  simplicial  homology  functor  to  (\ref{eq-3.b79})  and  (\ref{eq-3.b7999}),  
   we  obtain     (\ref{eq-3.c79}) and  (\ref{eq-3.c7999})  respectively 
    such  that  $i_{\vec  G}(1/2,  \infty)_*=j_{\vec  G}(1/2,  \infty)_*^{-1}$  is  the  identity  map.  
   \end{proof}

Summarizing  Proposition~\ref{pr-3.a1}  and  Proposition~\ref{pr-3.v1},  
we  obtain  Theorem~\ref{th-main-intro-1}.

   \subsection{Configuration  spaces and  strong  totally  geodesic  embeddings}
   \label{ssa-3.3}

    \begin{proposition}\label{le-9.a1q}
    \begin{enumerate}[(1)]
    \item
     Let  $\varphi:  \vec  G\longrightarrow  \vec  G'$  be  a  strong   totally geodesic  immersion  
   of  digraphs  with radius  $m_0/2$.    
      Then   $\varphi$  induces  a  double-persistent     $\Sigma_k$-equivariant 
       isometric    embedding  
   of  double-filtered  metric  spaces 
   \begin{eqnarray}\label{eq-7a9-m1}
   \varphi_k(-,-):    ({\rm  Conf}_k(V_{\vec  G}, -,-),  (d_{\vec  G})^k) \longrightarrow   {\rm  Conf}_k(V_{\vec  G'}, -,-),  (d_{\vec  G'})^k) 
   \end{eqnarray}
   for  $1\leq  n<m\leq  m_0 $
    where  $n/2$  is  the  first parameter  and  $m/2$  is  the  second  parameter  
   in  the  double-persistence;
   \item
    Let  $\varphi:  \vec  G\longrightarrow  \vec  G'$  be  a  strong   totally geodesic  embedding   
   of  digraphs.    
      Then   $\varphi$  induces  a  double-persistent     $\Sigma_k$-equivariant  isometric  
        embedding  
   of  double-filtered  metric  spaces 
   (\ref{eq-7a9-m1})   
   for  $1\leq  n<m\leq\infty $ where  $n/2$  is  the  first parameter  and  $m/2$  is  the  second  parameter  
   in  the  double-persistence.  
   \end{enumerate}  
\end{proposition}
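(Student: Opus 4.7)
The plan is to define $\varphi_k(n/2,m/2)$ by applying $\varphi$ coordinate-wise to each tuple, namely
$$\varphi_k(n/2,m/2)(v_1,\ldots,v_k) = (\varphi(v_1),\ldots,\varphi(v_k)),$$
and then to verify, in order, (i) well-definedness (the image lies in the target constraint configuration space), (ii) the isometry and injectivity properties, (iii) $\Sigma_k$-equivariance, and (iv) compatibility with the two persistence parameters.

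For (i), given $(v_1,\ldots,v_k)\in {\rm Conf}_k(V_{\vec G}, n/2, m/2)$ with $1\leq n<m\leq m_0$, each pair $i\neq j$ satisfies $n<d_{\vec G}(v_i,v_j)\leq m\leq m_0$, so Definition~\ref{def-2}(1) applies and yields $d_{\vec G'}(\varphi(v_i),\varphi(v_j))=d_{\vec G}(v_i,v_j)\in(n,m]$. Hence the image tuple lies in ${\rm Conf}_k(V_{\vec G'},n/2,m/2)$, and in particular its coordinates are all distinct since the pairwise distances are strictly positive. For (ii), the isometry with respect to the product metric follows from the same identity applied coordinate-wise, in the subspace-metric sense used in Proposition~\ref{pr-3.a1}; combined with the fact that $d_{\vec G'}(\varphi(v_i),\varphi(w_i))=0$ forces $v_i=w_i$, this also yields injectivity, hence the embedding property. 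The $\Sigma_k$-equivariance in (iii) is automatic because $\varphi$ acts independently on each coordinate.

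For (iv), I would check that the analogues of the commutative squares (\ref{eq-3.a8}) and (\ref{eq-3.a98}) commute: both the vertical inclusions (between different persistence parameters) and the horizontal maps $\varphi_k$ act coordinate-wise via $\varphi$, so commutativity is immediate. Part (2) (the embedding case) follows as a limiting case of part (1): a strong totally geodesic embedding satisfies the immersion condition with arbitrary radius $m_0/2$ (cf.~Proposition~\ref{pr-2.b2}), so one may take $m_0$ arbitrarily large, removing the restriction $m\leq m_0$.

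The main obstacle I anticipate is pinning down the precise sense of \emph{isometric} in the immersion case when the product metric $(d_{\vec G})^k$ involves cross-tuple distances $d_{\vec G}(v_i,w_i)$ that may exceed $m_0$, in which case the strong totally geodesic immersion identity does not directly apply to those pairs. I expect to handle this by adopting the same subspace-metric convention implicit in the proof of Proposition~\ref{pr-3.a1}, whereby the embedding is required to preserve only the metric relevant to the within-tuple constraint defining the configuration space.
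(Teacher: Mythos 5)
Your proposal follows the same strategy as the paper's proof: define $\varphi_k$ coordinate-wise by $\varphi_k(n/2,m/2)(v_1,\ldots,v_k)=(\varphi(v_1),\ldots,\varphi(v_k))$, verify well-definedness from the immersion (resp.\ embedding) property applied to the within-tuple pairs $(v_i,v_j)$, assert isometry with respect to the product metric, note $\Sigma_k$-equivariance, check the two commutative squares for the double-persistence, and obtain part (2) by dropping the radius bound. This is essentially the paper's proof step for step.

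The concern you raise at the end, however, is not a minor technicality; it points at a genuine gap that the paper's proof does not address. The product metric $(d_{\vec G})^k\bigl((u_1,\ldots,u_k),(v_1,\ldots,v_k)\bigr)$ is built from the cross-tuple distances $d_{\vec G}(u_i,v_i)$, and membership in $\mathrm{Conf}_k(V_{\vec G},n/2,m/2)$ bounds only the within-tuple distances $d_{\vec G}(u_i,u_j)$ and $d_{\vec G}(v_i,v_j)$ for $i\neq j$ by $m\leq m_0$; the cross-tuple distances are unconstrained and can exceed $m_0$. For such pairs, Lemma~\ref{le-2.91} yields only $d_{\vec G'}(\varphi(u_i),\varphi(v_i))\leq d_{\vec G}(u_i,v_i)$, possibly strictly, so the asserted identity $(d_{\vec G})^k=(d_{\vec G'})^k\circ(\varphi_k\times\varphi_k)$ can fail. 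Example~\ref{ex-3.3} already gives a concrete counterexample with a path-connected source: for the immersion $\vec L\to\vec C_r$ with radius $m_0/2$ and $m_0<\infty$, take $k=1$; then $d_{\vec L}(v_0,v_r)=r$ while $d_{\vec C_r}(\varphi(v_0),\varphi(v_r))=0$, so $\varphi_1$ is neither an isometry nor injective. Hence the ``isometric embedding'' conclusion of part (1) is not correct as stated for immersions of finite radius; it does hold unconditionally in part (2). Your suggestion to fall back on a ``subspace-metric convention implicit in Proposition~\ref{pr-3.a1}'' does not close the gap, because that proposition's isometry assertion has the same unaddressed difficulty (there $d_G\leq d_{\vec G}$ with possible strict inequality off the constrained pairs). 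A correct formulation of part (1) should restrict the isometry assertion to pairs of tuples whose coordinate-wise distances are $\leq m_0$, or else weaken ``isometric embedding'' accordingly, and the paper's proof should be amended to say which is intended.
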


\begin{proof}
(1)    For  any $1\leq  n<m\leq  m_0$,  we  have  an  induced   $\Sigma_k$-equivariant    isometric 
embedding  
 \begin{eqnarray}\label{eq-7a9.5}
   \varphi_k(\frac{n}{2}, \frac{m}{2}):     {\rm  Conf}_k(V_{\vec  G}, \frac{n}{2}, \frac{m}{2})  \longrightarrow  
    {\rm  Conf}_k(V_{\vec  G'}, \frac{n}{2}, \frac{m}{2})
   \end{eqnarray}
given by 
\begin{eqnarray}\label{eq-25-ah}
 \varphi_k(\frac{n}{2}, \frac{m}{2})(v_1,\ldots,v_k)=(\varphi(v_1),\ldots,\varphi(v_k)).   
 \end{eqnarray}
Since  $\varphi:  \vec  G\longrightarrow  \vec  G'$  is  a  strong   totally geodesic  immersion  
   of  digraphs  with radius  $m_0/2$  and  $m\leq   m_0$,  
   we  have  
   \begin{eqnarray*}
   d_{\vec  G}(v_i,  v_j)  =  d_{\vec  G'}  (\varphi(v_i),  \varphi(v_j))
   \end{eqnarray*}
   in  (\ref{eq-25-ah})  
  for  any  $i\neq  j$. 
     Thus  (\ref{eq-7a9.5})  is  well-defined.   
  Moreover,    for  any   $(u_1,\ldots,u_k)$  and  $(v_1,\ldots,v_k)$   in
    $ {\rm  Conf}_k(V_{\vec  G}, n/2,m/2)$,  we  have  
  \begin{eqnarray*}
  (d_{\vec  G})^k((u_1,\ldots,u_k),(v_1,\ldots,v_k))= 
  (d_{\vec  G'})^k((\varphi(u_1),\ldots,\varphi(u_k)),
  (\varphi(v_1),\ldots,\varphi(v_k)) ).  
  \end{eqnarray*}  
  Thus  (\ref{eq-7a9.5})   is  an  isometry.    
  For any  $1\leq  n_1\leq  n_2<m\leq  m_0$,  the  diagram commutes  
  \begin{eqnarray}\label{diag-laz-1}
  \xymatrix{
  {\rm  Conf}_k(V_{\vec  G}, \dfrac{n_2}{2}, \dfrac{m }{2}) 
  \ar[rr]^-{\varphi_k(\frac{n_2}{2}, \frac{m }{2})}  
  \ar[d]
  &&  {\rm  Conf}_k(V_{\vec  G'}, \dfrac{n_2}{2}, \dfrac{m }{2}) 
  \ar[d]\\
   {\rm  Conf}_k(V_{\vec  G}, \dfrac{n_1}{2}, \dfrac{m }{2})  
   \ar[rr]^-{\varphi_k(\frac{n_1}{2}, \frac{m }{2})}
   && {\rm  Conf}_k(V_{\vec  G'}, \dfrac{n_1}{2}, \dfrac{m }{2})  
  }
  \end{eqnarray}
  and  for  any   $1\leq  n <m_1\leq  m_2\leq  m_0$,  the  diagram commutes 
    \begin{eqnarray}\label{diag-laz-2}
  \xymatrix{
  {\rm  Conf}_k(V_{\vec  G}, \dfrac{n }{2}, \dfrac{m_1 }{2}) 
  \ar[rr]^-{\varphi_k(\frac{n }{2}, \frac{m_1 }{2})}  
  \ar[d]
  &&  {\rm  Conf}_k(V_{\vec  G'}, \dfrac{n }{2}, \dfrac{m_1 }{2}) 
  \ar[d]\\
   {\rm  Conf}_k(V_{\vec  G}, \dfrac{n }{2}, \dfrac{m _2}{2})  
   \ar[rr]^-{\varphi_k(\frac{n }{2}, \frac{m_2 }{2})}
   && {\rm  Conf}_k(V_{\vec  G'}, \dfrac{n }{2}, \dfrac{m_2 }{2}).   
  }
  \end{eqnarray} 
  Here    in  (\ref{diag-laz-1})  and  (\ref{diag-laz-2}),  the vertical  maps  are  canonical  inclusions.  
  Hence   with  the  help  of  Lemma~\ref{le-2503a1z},  (\ref{diag-laz-1})  and  (\ref{diag-laz-2}),   
  we  can  take  the  double-persistence  of  $n/2$  and  $m/2$  in  (\ref{eq-7a9.5}).    
  We  obtain  the  double-persistent    $\Sigma_k$-equivariant   isometric      embedding  
   of  double-filtered  metric  spaces 
   (\ref{eq-7a9-m1})  for  $1\leq  n<m\leq  m_0$.

   (2)    The proof  of  (2)  is  an  analog  of  (1).  
\end{proof}

\begin{corollary}\label{co-bza2}
   \begin{enumerate}[(1)]
    \item
     Let  $\varphi:  \vec  G\longrightarrow  \vec  G'$  be  a  strong   totally geodesic  immersion  
   of  digraphs  with radius  $m_0/2$.    
      Then   $\varphi$  induces  a    double-persistent  isometric     embedding  
   of  double-filtered  metric  spaces 
   \begin{eqnarray}\label{eq-7b9-n1}
   \varphi_k(-,-)/\Sigma_k:    ({\rm  Conf}_k(V_{\vec  G}, -,-)/\Sigma_k,  (d_{\vec  G})^k/\Sigma_k) \longrightarrow   {\rm  Conf}_k(V_{\vec  G'}, -,-)/\Sigma_k,  (d_{\vec  G'})^k/\Sigma_k) 
   \end{eqnarray}
   for  $1\leq  n<m\leq  m_0$
    where  $n/2$    and  $m/2$  are the     parameters  
   in  the  double-persistence;
   \item
    Let  $\varphi:  \vec  G\longrightarrow  \vec  G'$  be  a  strong   totally geodesic  embedding   
   of  digraphs.    
      Then   $\varphi$  induces   a    double-persistent    isometric      embedding  
   of  double-filtered  metric  spaces 
   (\ref{eq-7b9-n1})   
   for  $1\leq  n<m $ where  $n/2$    and  $m/2$  are the     parameters  
   in  the  double-persistence.  
   \end{enumerate}  
\end{corollary}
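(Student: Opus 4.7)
The plan is to take $\Sigma_k$-quotients of the embedding produced in Proposition~\ref{le-9.a1q} and verify that all the structure (isometry, injectivity, and the double-persistent commutativity) descends correctly. By Proposition~\ref{le-9.a1q}, we already have the $\Sigma_k$-equivariant isometric embedding
\begin{eqnarray*}
\varphi_k(\tfrac{n}{2},\tfrac{m}{2}):
{\rm Conf}_k(V_{\vec G}, \tfrac{n}{2}, \tfrac{m}{2}) \longrightarrow
{\rm Conf}_k(V_{\vec G'}, \tfrac{n}{2}, \tfrac{m}{2})
\end{eqnarray*}
for $1\leq n<m\leq m_0$ in case (1) and for $1\leq n<m$ in case (2). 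Because $\varphi_k$ is $\Sigma_k$-equivariant, it factors through the orbit spaces to give an induced map $\varphi_k(\tfrac{n}{2},\tfrac{m}{2})/\Sigma_k$.

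The first step is to check that the induced map is injective. If $\varphi_k(u_1,\ldots,u_k)$ and $\varphi_k(v_1,\ldots,v_k)$ lie in the same $\Sigma_k$-orbit, then there exists $\sigma\in\Sigma_k$ with $\varphi(u_i)=\varphi(v_{\sigma(i)})$ for all $i$. Because the coordinates in any element of ${\rm Conf}_k(V_{\vec G},n/2,m/2)$ are pairwise at positive distance, all of $u_1,\ldots,u_k$ are distinct (and similarly for $v$); then $d_{\vec G}(u_i,v_{\sigma(i)})=d_{\vec G'}(\varphi(u_i),\varphi(v_{\sigma(i)}))=0$ by the strong totally geodesic immersion/embedding hypothesis (applicable since these distances fall in the allowed range), which forces $u_i=v_{\sigma(i)}$. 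Thus the two tuples lie in the same orbit.

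The second and main step is the isometry property on the quotients. The quotient metric on ${\rm Conf}_k(V_{\vec G},n/2,m/2)/\Sigma_k$ is
\begin{eqnarray*}
\big((d_{\vec G})^k/\Sigma_k\big)\big([\mathbf u], [\mathbf v]\big)
= \min_{\sigma\in\Sigma_k} (d_{\vec G})^k(\mathbf u, \sigma \mathbf v).
\end{eqnarray*}
Since $\varphi_k$ is $\Sigma_k$-equivariant and is a pointwise isometry with respect to the product metric, each term in this minimum is preserved by $\varphi_k$, so the minimum is preserved. Hence the induced map is an isometric embedding between the quotient metric spaces.

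Finally, to upgrade from parameter-wise to double-persistent, take the $\Sigma_k$-quotient of the two commutative diagrams $(\ref{diag-laz-1})$ and $(\ref{diag-laz-2})$ from the proof of Proposition~\ref{le-9.a1q}. The vertical inclusions there are $\Sigma_k$-equivariant (they are restrictions), so their quotients agree with the inclusions produced in Corollary~\ref{co-2503-bma1}, giving commutative diagrams at the orbit-space level. Combined with the isometry and injectivity established above, this yields the double-persistent isometric embedding $(\ref{eq-7b9-n1})$ on the declared ranges of $(n,m)$. Part (2) follows by the same argument with the range $1\leq n<m$ in place of $1\leq n<m\leq m_0$. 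The step that requires the most care is verifying the isometry with respect to the quotient metric, but once phrased as preservation of the minimum over $\Sigma_k$ it reduces immediately to equivariance plus pointwise isometry.
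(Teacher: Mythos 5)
Your overall route matches the paper's: pass to $\Sigma_k$-orbits of the equivariant isometric embedding from Proposition~\ref{le-9.a1q}, check that the isometry descends to the quotient metric, and then quotient the commuting squares to get double-persistence. The paper's proof is terser --- it simply asserts the quotient isometry --- whereas you spell it out via the formula $((d_{\vec G})^k/\Sigma_k)([\mathbf u],[\mathbf v])=\min_{\sigma}(d_{\vec G})^k(\mathbf u,\sigma\mathbf v)$ together with equivariance, which is exactly the right way to see it and is more transparent.

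However, your ``first step'' injectivity argument has a genuine gap in case (1). You assert $d_{\vec G}(u_i,v_{\sigma(i)})=d_{\vec G'}(\varphi(u_i),\varphi(v_{\sigma(i)}))$ by the immersion hypothesis, ``applicable since these distances fall in the allowed range.'' But the membership of $\mathbf u$ and $\mathbf v$ in ${\rm Conf}_k(V_{\vec G},n/2,m/2)$ only constrains the \emph{within-tuple} distances $d_{\vec G}(u_i,u_j)$ and $d_{\vec G}(v_i,v_j)$ to lie in $(n,m]\subseteq(0,m_0]$; it imposes no bound whatsoever on the \emph{cross-tuple} distances $d_{\vec G}(u_i,v_{\sigma(i)})$, which can exceed $m_0$. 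A strong totally geodesic immersion of finite radius need not be injective on vertices (see Example~\ref{ex-3.3}, where $\vec L\to\vec C_r$ identifies $v_0$ and $v_r$), so you cannot conclude $u_i=v_{\sigma(i)}$ from $\varphi(u_i)=\varphi(v_{\sigma(i)})$ alone. The fix is to drop this step entirely: injectivity of the induced map on orbit spaces is automatic once you have the quotient isometry, because the quotient metric is a genuine metric (the minimum over a finite group is attained, so zero quotient distance forces the two representatives into a single orbit). Your second step already establishes the quotient isometry, so the conclusion stands, but as written the injectivity argument is not correct for immersions of finite radius.
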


\begin{proof}
(1)  
 For  any $1\leq  n<m\leq  m_0$,   the 
 $\Sigma_k$-equivariant   isometric   embedding   (\ref{eq-7a9.5})  induces   an   isometric  embedding  
 \begin{eqnarray}\label{eq-8a9.7}
   \varphi_k(\frac{n}{2}, \frac{m}{2})\Big /\Sigma_k:   
     {\rm  Conf}_k(V_{\vec  G}, \frac{n}{2}, \frac{m}{2})\Big /\Sigma_k  \longrightarrow  
    {\rm  Conf}_k(V_{\vec  G'}, \frac{n}{2}, \frac{m}{2})\Big /\Sigma_k
   \end{eqnarray}
given by 
\begin{eqnarray*} 
 \varphi_k(\frac{n}{2}, \frac{m}{2})(\{v_1,\ldots,v_k\})=\{\varphi(v_1),\ldots,\varphi(v_k)\}    
 \end{eqnarray*}
  such  that  
  \begin{eqnarray*}
  ((d_{\vec  G})^k/\Sigma_k)
  (\{u_1,\ldots,u_k\},\{v_1,\ldots,v_k\})= 
  ((d_{\vec  G'})^k/\Sigma_k)(\{\varphi(u_1),\ldots,\varphi(u_k)\},
  \{\varphi(v_1),\ldots,\varphi(v_k)\} ) 
  \end{eqnarray*}  
 for  any   $(u_1,\ldots,u_k)$  and  $(v_1,\ldots,v_k)$   in
    $ {\rm  Conf}_k(V_{\vec  G}, n/2,m/2)$.   
 Take   the  double-persistence  for  $n/2$  and  $m/2$   in  (\ref{eq-8a9.7})  with  $1\leq  n<m\leq  m_0$.      
 The  double-persistent      $\Sigma_k$-equivariant  isometric   embedding  (\ref{eq-7a9-m1})  induces  
 a       double-persistent       isometric    embedding  (\ref{eq-7b9-n1}).

(2)  Analogous  with  (1),  
the  proof of  (2)    follows  from Proposition~\ref{le-9.a1q}~(2).  
\end{proof}

\begin{corollary}\label{co-na-1}
  \begin{enumerate}[(1)]
    \item
     Let  $\varphi:  \vec  G\longrightarrow  \vec  G'$  be  a  strong   totally geodesic  immersion  
   of  digraphs  with radius  $m_0/2$.    
      Then   $\varphi$  induces  a  double-persistent   isometric    morphism  of      double-persistent   covering  maps
      \footnote[3]{
      The definition of  persistent  covering  map  is  introduced  by  the  present  author  in  
      \cite[Definition~4]{2025-reg}. }
      \begin{eqnarray}\label{eq-cvm5.1}
      \xymatrix{
      {\rm  Conf}_k(V_{\vec  G}, -,-) \ar[rr]^-{  \varphi_k(-,-)} \ar[d]_{\pi_{\vec  G,k}(-,-)}
        &&  {\rm  Conf}_k(V_{\vec  G'}, -,-)\ar[d]^{\pi_{\vec  G,k}(-,-)}\\
       ({\rm  Conf}_k(V_{\vec  G}, -,-)/\Sigma_k\ar[rr]^-{ \varphi_k(-,-)/\Sigma_k}
       &&{\rm  Conf}_k(V_{\vec  G'}, -,-)/\Sigma_k 
      }
      \end{eqnarray}
   for  $1\leq  n<m\leq  m_0$
    where  $n/2$    and  $m/2$  are the     parameters  
   in  the  double-persistence;
   \item
    Let  $\varphi:  \vec  G\longrightarrow  \vec  G'$  be  a  strong   totally geodesic  embedding   
   of  digraphs.    
      Then   $\varphi$  induces  a   double-persistent   isometric morphism  of      double-persistent   covering  maps 
   (\ref{eq-cvm5.1})   
   for  $1\leq  n<m\leq \infty $  where  $n/2$    and  $m/2$  are the     parameters  
   in  the  double-persistence.  
   \end{enumerate}  
\end{corollary}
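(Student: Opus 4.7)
The plan is to assemble the three previously established ingredients: the $\Sigma_k$-equivariant isometric embedding on ordered configuration spaces from Proposition~\ref{le-9.a1q}, its descent to orbit spaces from Corollary~\ref{co-bza2}, and the isometric covering maps from Corollary~\ref{co-250328-cv}. What remains is to verify that these maps fit into the required commutative square at each persistence level, and that the squares glue consistently across the double-persistence.

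For part~(1), I would fix a pair $(n,m)$ with $1\leq n<m\leq m_0$ and check the commutativity of the square (\ref{eq-cvm5.1}) pointwise. Given an ordered tuple $(v_1,\ldots,v_k)\in{\rm Conf}_k(V_{\vec G},n/2,m/2)$, the right-then-down path sends it via (\ref{eq-25-ah}) to $(\varphi(v_1),\ldots,\varphi(v_k))$ whose $\Sigma_k$-orbit is the set $\{\varphi(v_1),\ldots,\varphi(v_k)\}$. The down-then-right path first produces the orbit $\{v_1,\ldots,v_k\}$ and then, by the explicit formula for $\varphi_k(n/2,m/2)/\Sigma_k$ given in (\ref{eq-8a9.7}), sends it to the same set. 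Hence the square commutes; its horizontal arrows are isometric embeddings and its vertical arrows are isometric covering maps.

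Next, I would lift this to the double-persistence by showing that for $n_1\leq n_2$ (resp.\ $m_1\leq m_2$), the individual squares at $(n_1,m)$ and $(n_2,m)$ (resp.\ at $(n,m_1)$ and $(n,m_2)$) are linked by the canonical inclusions appearing in Lemma~\ref{le-2503a1z} and Corollary~\ref{co-2503-bma1}. The resulting three-dimensional prism commutes by combining the naturality of the $\Sigma_k$-orbit construction with the persistence squares (\ref{diag-laz-1}) and (\ref{diag-laz-2}) from the proof of Proposition~\ref{le-9.a1q}. Assembling these prisms delivers the double-persistent morphism of covering maps. For part~(2), I would run the same argument over the wider range $1\leq n<m$, substituting Proposition~\ref{le-9.a1q}(2) and Corollary~\ref{co-bza2}(2) in place of their (1)-analogs.

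The main obstacle, such as it is, is essentially bookkeeping: no new geometric content is required beyond the identity ``applying $\varphi$ coordinate-wise commutes with forming $\Sigma_k$-orbits''. The only mild subtlety is verifying that this identity remains coherent under both directions of persistence, which it does because every auxiliary map involved is either a canonical inclusion or a coordinate-wise application of $\varphi$.
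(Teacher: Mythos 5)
Your proposal is correct and takes essentially the same route as the paper: the paper's proof is a one-line citation of Corollary~\ref{co-250328-cv}, Proposition~\ref{le-9.a1q}, and Corollary~\ref{co-bza2}, which are precisely the three ingredients you assemble. You simply spell out the pointwise commutativity check (applying $\varphi$ coordinate-wise commutes with forming $\Sigma_k$-orbits) and the persistence coherence, both of which the paper leaves implicit.
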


\begin{proof}
(1)   The  proof  of  (1)  follows  from  Corollary~\ref{co-250328-cv},  
 Proposition~\ref{le-9.a1q}~(1)  and  Corollary~\ref{co-bza2}~(1).  
 
 (2)
 The  proof  of  (2)  follows  from  Corollary~\ref{co-250328-cv},  
 Proposition~\ref{le-9.a1q}~(2)  and  Corollary~\ref{co-bza2}~(2). 
\end{proof}

\begin{corollary}\label{co-au-1}
   \begin{enumerate}[(1)]
    \item
     Let  $\varphi:  \vec  G\longrightarrow  \vec  G'$  be  a  strong   totally geodesic  immersion  
   of  digraphs  with radius  $m_0/2$.    
      Then   $\varphi$  induces  a      double-persistent  embedding  
   of  double-filtered  simplicial  complexes 
   \begin{eqnarray}\label{eq-7c9-l1}
   \varphi(-,-):    {\rm  Ind}({\vec  G}, -,-)  \longrightarrow   {\rm   Ind} ({\vec  G'}, -,-)  
   \end{eqnarray}
   for  $1\leq  n<m\leq  m_0$
     where  $n/2$    and  $m/2$  are the     parameters  
   in  the  double-persistence;
   \item
    Let  $\varphi:  \vec  G\longrightarrow  \vec  G'$  be  a  strong   totally geodesic  embedding   
   of  digraphs.    
      Then   $\varphi$  induces  an    double-persistent   embedding  of   double-filtered 
      simplicial  complexes 
   (\ref{eq-7c9-l1})   
   for  $1\leq  n<m\leq\infty $  where  $n/2$    and  $m/2$  are the     parameters  
   in  the  double-persistence.  
   \end{enumerate}  
\end{corollary}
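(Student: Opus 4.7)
The plan is to deduce Corollary~\ref{co-au-1} directly from Corollary~\ref{co-bza2} by applying Definition~\ref{def-5.a1}, which presents the (constraint) independence complex as the union over $k\geq 1$ of the unordered configuration spaces with the elements of ${\rm Conf}_k(V_{\vec G},n/2,m/2)/\Sigma_k$ taken as the $(k-1)$-simplices. The key observation is that the set-level map induced by a morphism is automatically simplicial because it acts coordinate-wise on $\Sigma_k$-orbits, so faces (obtained by omitting one vertex) are sent to faces.

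For part~(1), I will start from the double-persistent $\Sigma_k$-equivariant isometric embedding $\varphi_k(-,-)/\Sigma_k$ in (\ref{eq-7b9-n1}) of Corollary~\ref{co-bza2}~(1). For each $1\leq n<m\leq m_0$ and each $k\geq 1$, this map sends $\{v_1,\ldots,v_k\}\in {\rm Conf}_k(V_{\vec G},n/2,m/2)/\Sigma_k$ to $\{\varphi(v_1),\ldots,\varphi(v_k)\}$. By the strong totally geodesic immersion property, $d_{\vec G}(v_i,v_j)=d_{\vec G'}(\varphi(v_i),\varphi(v_j))$ whenever $d_{\vec G}(v_i,v_j)\leq m_0$, so the images have pairwise distances again in $(n,m]$ and in particular are distinct; this places the image in ${\rm Conf}_k(V_{\vec G'},n/2,m/2)/\Sigma_k$. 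Assembling these for all $k\geq 1$ via (\ref{eq-3.a1}) yields a map of simplicial complexes $\varphi(n/2,m/2):{\rm Ind}(\vec G,n/2,m/2)\to {\rm Ind}(\vec G',n/2,m/2)$ that carries $(k-1)$-simplices to $(k-1)$-simplices; simpliciality on faces is automatic because omitting a vertex $v_i$ on the left corresponds to omitting $\varphi(v_i)$ on the right. Injectivity of $\varphi(n/2,m/2)$ on each $k$-skeleton follows from the embedding property of $\varphi_k(n/2,m/2)/\Sigma_k$, giving an embedding of simplicial complexes (\ref{eq-7c9-l1}).

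The double-persistence is then inherited directly: the commutative squares (\ref{diag-laz-1}) and (\ref{diag-laz-2}) (and their orbit-space versions used in Corollary~\ref{co-bza2}) express compatibility of $\varphi_k(-,-)/\Sigma_k$ with the inclusions ${\rm Conf}_k(V_{\vec G},n_2/2,m/2)/\Sigma_k \hookrightarrow {\rm Conf}_k(V_{\vec G},n_1/2,m/2)/\Sigma_k$ for $n_1\leq n_2$ and ${\rm Conf}_k(V_{\vec G},n/2,m_1/2)/\Sigma_k\hookrightarrow {\rm Conf}_k(V_{\vec G},n/2,m_2/2)/\Sigma_k$ for $m_1\leq m_2$. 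Taking unions over $k$ via (\ref{eq-3.a1}) transports these squares verbatim to the simplicial complexes ${\rm Ind}(\vec G,-,-)$ and ${\rm Ind}(\vec G',-,-)$, so $\varphi(-,-)$ is a double-persistent embedding for the range $1\leq n<m\leq m_0$. Part~(2) is proved in exactly the same way, replacing Corollary~\ref{co-bza2}~(1) by Corollary~\ref{co-bza2}~(2); the strong totally geodesic embedding hypothesis removes the upper bound $m_0$, so the conclusion holds for all $1\leq n<m$.

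No step presents a genuine obstacle; the only thing to verify carefully is that the map is well-defined at the level of simplicial complexes, i.e., that images of $(k-1)$-simplices are again $(k-1)$-simplices of the same dimension, and this is exactly the distance-preserving property built into the notion of strong totally geodesic immersion.
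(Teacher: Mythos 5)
Your proposal is correct and takes essentially the same route as the paper: both deduce the result from Corollary~\ref{co-bza2} by passing from the unordered configuration spaces to the independence complex via Definition~\ref{def-5.a1}, with the map on simplices induced by $v\mapsto\varphi(v)$. You simply spell out the details (well-definedness on simplices, compatibility with face maps, injectivity, and inheritance of double-persistence from the commuting squares) that the paper's two-line proof leaves implicit.
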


\begin{proof}
(1)   By  Corollary~\ref{co-bza2}~(1),  for  $1\leq  n<m\leq  m_0$,  
we  have  a  double-persistent  embedding  of  simplicial  simplicial  complexes 
   (\ref{eq-7c9-l1})  sending  any  vertex  $v$    
   to      $\varphi(v)$.

    (2)   Analogous  with (1),   
    the  double-persistent  embedding   
    follows  by  Corollary~\ref{co-bza2}~(2).     
    \end{proof}

Substituting  the  strong  totally  geodesic  embeddings  of  digraphs  
 with  strong   totally  geodesic  embeddings  of  graphs,  analogs  of  
 Proposition~\ref{le-9.a1q}  and  Corollaries~\ref{co-bza2},  \ref{co-na-1},  \ref{co-au-1}
 can  be  obtained.  
      
    \begin{proposition}\label{le-9.au-1q}
     Let  $\varphi:    G\longrightarrow     G'$  be  a  strong   totally geodesic  immersion  
   of   graphs  with radius  $m_0/2$  (resp.   a  strong   totally geodesic  embedding   
   of   graphs).    
      Then   $\varphi$  induces  a  double-persistent    $\Sigma_k$-equivariant  
       isometric    embedding  
   of  double-filtered  metric  spaces 
   \begin{eqnarray*} 
   \varphi_k(-,-):    ({\rm  Conf}_k(V_{ G}, -,-),  (d_{ G})^k) \longrightarrow   {\rm  Conf}_k(V_{  G'}, -,-),  (d_{  G'})^k) 
   \end{eqnarray*}
   for  $1\leq  n<m\leq  m_0$  (resp.   for  $1\leq  n<m \leq\infty$)
    where  $n/2$  is  the  first parameter  and  $m/2$  is  the  second  parameter  
   in  the  double-persistence.   
\end{proposition}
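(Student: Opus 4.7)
The plan is to mirror the proof of Proposition~\ref{le-9.a1q} line by line, substituting the digraph objects $\vec G, \vec G', d_{\vec G}, d_{\vec G'}$ by their undirected counterparts $G, G', d_G, d_{G'}$, and invoking the graph clause Definition~\ref{def-2}~(2) together with Lemma~\ref{le-2503b2z} in place of Definition~\ref{def-2}~(1) together with Lemma~\ref{le-2503a1z}. First, for each fixed pair $(n,m)$ with $1 \le n < m \le m_0$ in the immersion case (respectively $1 \le n < m$ in the embedding case), I would define the candidate map coordinate-wise by
\[
\varphi_k(n/2, m/2)(v_1, \dots, v_k) = (\varphi(v_1), \dots, \varphi(v_k)).
\]

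To check well-definedness, I would start from $(v_1, \dots, v_k) \in {\rm Conf}_k(V_G, n/2, m/2)$, which forces $n < d_G(v_i, v_j) \le m \le m_0$ for every $i \ne j$. Since $d_G(v_i, v_j) \le m_0$, Definition~\ref{def-2}~(2) gives $d_{G'}(\varphi(v_i), \varphi(v_j)) = d_G(v_i, v_j) \in (n,m]$, placing the image inside ${\rm Conf}_k(V_{G'}, n/2, m/2)$; in the embedding case the same identity holds for every pair with no radius restriction. Next, I would verify the isometry property for the $k$-fold product metric via the same pointwise identity $d_G(u_i, v_i) = d_{G'}(\varphi(u_i), \varphi(v_i))$, and note that $\Sigma_k$-equivariance is immediate from (\ref{eq-5.zxcv1}) and the coordinate-wise definition of $\varphi_k$.

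Finally, I would check commutativity of the two filtration squares analogous to (\ref{diag-laz-1}) and (\ref{diag-laz-2}), whose vertical arrows are the canonical inclusions supplied by Lemma~\ref{le-2503b2z}; commutativity is automatic since every arrow involved is either coordinate-wise evaluation of $\varphi$ or a canonical inclusion. Taking the double-persistence in $(n,m)$ then yields the desired $\Sigma_k$-equivariant isometric embedding of double-filtered metric spaces. I do not anticipate any genuine obstacle: Definition~\ref{def-2}~(2) supplies exactly the distance-preservation identity needed in the undirected setting and Lemma~\ref{le-2503b2z} supplies the companion double-filtration, so the argument translates verbatim from the digraph version. The only step deserving attention is confirming that in the immersion case the bound $m \le m_0$ is precisely what lets the radius-$m_0/2$ hypothesis be invoked simultaneously on every pair $(v_i, v_j)$ appearing in a configuration.
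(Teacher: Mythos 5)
Your proposal is correct and matches the paper exactly: the paper's entire proof of Proposition~\ref{le-9.au-1q} is the single sentence ``The proof is analogous with Proposition~\ref{le-9.a1q},'' and your proposal carries out precisely that substitution — replacing $\vec G$, $\vec G'$, $d_{\vec G}$, $d_{\vec G'}$ by $G$, $G'$, $d_G$, $d_{G'}$, invoking Definition~\ref{def-2}~(2) and Lemma~\ref{le-2503b2z} in place of Definition~\ref{def-2}~(1) and Lemma~\ref{le-2503a1z}, and rechecking well-definedness, isometry, equivariance, and commutativity of the filtration squares.
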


\begin{proof}
The  proof  is  analogous  with  Proposition~\ref{le-9.a1q}.
\end{proof}

\begin{corollary}\label{co-au-bza2}
     Let  $\varphi:   G\longrightarrow    G'$  be  a  strong   totally geodesic  immersion  
   of   graphs  with radius  $m_0/2$  (resp.  a  strong   totally geodesic  embedding   
   of   graphs).    
      Then   $\varphi$  induces  a  double-persistent   isometric      embedding  
   of  double-filtered  metric  spaces 
   \begin{eqnarray*} 
   \varphi_k(-,-)/\Sigma_k:    ({\rm  Conf}_k(V_{   G}, -,-)/\Sigma_k,  (d_{  G})^k/\Sigma_k) \longrightarrow   {\rm  Conf}_k(V_{   G'}, -,-)/\Sigma_k,  (d_{ G'})^k/\Sigma_k) 
   \end{eqnarray*}
   for  $1\leq  n<m\leq  m_0$   (resp.   for  $1\leq  n<m\leq\infty $)
    where  $n/2$    and  $m/2$  are the     parameters  
   in  the  double-persistence.   
\end{corollary}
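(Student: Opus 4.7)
The plan is to mirror the derivation of Corollary~\ref{co-bza2} from Proposition~\ref{le-9.a1q}, substituting graphs for digraphs throughout. The argument is formal once Proposition~\ref{le-9.au-1q} is in hand; the only thing being checked is that the $\Sigma_k$-equivariant embedding produced there descends cleanly to orbit spaces and assembles into a double-persistent map.

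First I would invoke Proposition~\ref{le-9.au-1q} to obtain, for each admissible pair $(n,m)$ (with $1\leq n<m\leq m_0$ in the immersion case, $1\leq n<m$ in the embedding case), the $\Sigma_k$-equivariant isometric embedding
\begin{equation*}
\varphi_k(n/2, m/2):  {\rm  Conf}_k(V_{G}, n/2, m/2) \longrightarrow  {\rm  Conf}_k(V_{G'}, n/2, m/2)
\end{equation*}
sending $(v_1,\ldots,v_k)$ to $(\varphi(v_1),\ldots,\varphi(v_k))$. Next, since the $\Sigma_k$-actions on both configuration spaces are free and isometric (the graph analog of (\ref{eq-5.isom1})), this equivariant isometry descends to an injection $\varphi_k(n/2,m/2)/\Sigma_k$ on the orbit spaces, sending $\{v_1,\ldots,v_k\}$ to $\{\varphi(v_1),\ldots,\varphi(v_k)\}$. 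Its isometry property with respect to the induced quotient metrics $(d_{G})^k/\Sigma_k$ and $(d_{G'})^k/\Sigma_k$ is a direct consequence of the isometry of the lift together with the freeness of the action, exactly as in the proof of Corollary~\ref{co-bza2}.

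Finally I would pass to the double-persistent level. Commuting squares in both indices $n$ and $m$, analogous to (\ref{diag-laz-1}) and (\ref{diag-laz-2}), hold because the structure maps on both sides are the canonical inclusions of subspaces of $(V_{G})^k$ and $(V_{G'})^k$, and these inclusions are $\Sigma_k$-equivariant. Combining this with Lemma~\ref{le-2503b2z} allows me to assemble the family $\{\varphi_k(n/2,m/2)/\Sigma_k\}$ into the required double-persistent isometric embedding of double-filtered metric spaces.

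The main obstacle is essentially absent: once Proposition~\ref{le-9.au-1q} is available, every step is formal, and the only point requiring any care is that the induced quotient metric behaves well under the descended map, which is automatic because the lifted action is free and isometric. The immersion and embedding cases differ only in the range of admissible $(n,m)$, so a single argument handles both.
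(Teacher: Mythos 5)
Your proposal is correct and follows exactly the route the paper takes: invoke Proposition~\ref{le-9.au-1q} for the $\Sigma_k$-equivariant isometric embedding at each parameter pair, descend to orbit spaces using freeness and isometry of the action, and assemble via the commuting squares into a double-persistent map, just as in Corollary~\ref{co-bza2}. The paper's own proof is simply the one-line remark that the argument is analogous to Corollary~\ref{co-bza2}, which is precisely the analogy you have spelled out.
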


\begin{proof}
The  proof  is  analogous  with Corollary~\ref{co-bza2}.  
\end{proof}

\begin{corollary}\label{co-au-na-1}
         Let  $\varphi:   G\longrightarrow    G'$  be  a  strong   totally geodesic  immersion  
   of   graphs  with radius  $m_0/2$  (resp.  a  strong   totally geodesic  embedding   
   of   graphs).    
      Then   $\varphi$  induces  a  double-persistent  isometric   morphism  of      double-persistent   covering  maps
      \begin{eqnarray*} 
      \xymatrix{
      {\rm  Conf}_k(V_{   G}, -,-) \ar[rr]^-{  \varphi_k(-,-)} \ar[d]_{\pi_{   G,k}(-,-)}
        &&  {\rm  Conf}_k(V_{\vec  G'}, -,-)\ar[d]^{\pi_{ G,k}(-,-)}\\
       ({\rm  Conf}_k(V_{   G}, -,-)/\Sigma_k\ar[rr]^-{ \varphi_k(-,-)/\Sigma_k}
       &&{\rm  Conf}_k(V_{   G'}, -,-)/\Sigma_k 
      }
      \end{eqnarray*}
   for  $1\leq  n<m\leq  m_0$  (resp.   for  $1\leq  n<m\leq\infty $)
    where  $n/2$    and  $m/2$  are the     parameters  
   in  the  double-persistence.
    \end{corollary}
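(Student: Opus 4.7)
The plan is to mirror the proof of Corollary~\ref{co-na-1} essentially verbatim, replacing the digraph ingredients with the graph ingredients that have just been established. Concretely, the three building blocks already in place for graphs are: the double-persistent isometric covering map $\pi_{G,k}(-,-)$ from Corollary~\ref{co-250328-cv-a}, the $\Sigma_k$-equivariant isometric embedding $\varphi_k(-,-)$ of Proposition~\ref{le-9.au-1q}, and its quotient $\varphi_k(-,-)/\Sigma_k$ of Corollary~\ref{co-au-bza2}. I would treat the two cases (immersion and embedding) in parallel, since the only difference is the permissible range of the first persistence parameter.

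First, I would fix $1\leq n<m\leq m_0$ in the immersion case (or $1\leq n<m$ in the embedding case) and consider the square appearing in the statement at those specific parameters. The top horizontal map $\varphi_k(n/2,m/2)$ is given coordinatewise by $(v_1,\ldots,v_k)\mapsto (\varphi(v_1),\ldots,\varphi(v_k))$ by the construction in Proposition~\ref{le-9.au-1q}; the bottom horizontal map $\varphi_k(n/2,m/2)/\Sigma_k$ is given by $\{v_1,\ldots,v_k\}\mapsto \{\varphi(v_1),\ldots,\varphi(v_k)\}$ by Corollary~\ref{co-au-bza2}. The vertical maps are the canonical orbit projections. Commutativity at this fixed pair of parameters is then immediate: $\Sigma_k$-equivariance of the coordinatewise map forces
\begin{eqnarray*}
\pi_{G',k}\bigl(\tfrac{n}{2},\tfrac{m}{2}\bigr)\circ \varphi_k\bigl(\tfrac{n}{2},\tfrac{m}{2}\bigr)
= \bigl(\varphi_k\bigl(\tfrac{n}{2},\tfrac{m}{2}\bigr)/\Sigma_k\bigr)\circ \pi_{G,k}\bigl(\tfrac{n}{2},\tfrac{m}{2}\bigr).
\end{eqnarray*}
Since every map in sight is an isometry with respect to $(d_G)^k$ or its induced quotient, the square is a morphism of covering maps in the isometric category.

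Next I would promote this pointwise statement to the double-persistent level. Here I would invoke the commutativity already established in Lemma~\ref{le-2503b2z} and Corollary~\ref{co-250328-cv-a}, together with the commutativity of the squares (\ref{diag-laz-1}) and (\ref{diag-laz-2}) applied in the graph analog of Proposition~\ref{le-9.a1q}, so that all four corners of the square (\ref{eq-cvm5.1}) are compatible with the two inclusion directions $n_1<n_2$ (fixing $m$) and $m_1<m_2$ (fixing $n$). This ensures the square at parameters $(n,m)$ fits into a single commutative cube under refinement of either parameter, which is exactly the data of a double-persistent morphism of covering maps.

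Essentially no obstacle is anticipated because the entire argument is a formal assembly of three results already stated; the only mild bookkeeping is keeping track of which pairs $(n,m)$ are admissible in the immersion case (namely $m\leq m_0$) versus the embedding case (where there is no upper bound), and this is handled exactly as in Proposition~\ref{le-9.a1q}. The proof would therefore amount to two short sentences citing Corollary~\ref{co-250328-cv-a}, Proposition~\ref{le-9.au-1q} and Corollary~\ref{co-au-bza2}, in direct analogy with the proof of Corollary~\ref{co-na-1}.
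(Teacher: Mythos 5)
Your proof is correct and takes the same route as the paper: the paper's proof of Corollary~\ref{co-au-na-1} simply declares it analogous to Corollary~\ref{co-na-1}, which in turn cites the three ingredients you name (the covering map from Corollary~\ref{co-250328-cv-a}, the equivariant embedding from Proposition~\ref{le-9.au-1q}, and its quotient from Corollary~\ref{co-au-bza2}). You have merely spelled out the formal assembly that the paper leaves implicit.
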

    
    \begin{proof}
The  proof  is  analogous  with Corollary~\ref{co-na-1}.  
\end{proof}

\begin{corollary}\label{co-au-2}
     Let  $\varphi:       G\longrightarrow     G'$  be  a  strong   totally geodesic  immersion  
   of   graphs  with radius  $m_0/2$  (resp.  a  strong   totally geodesic  embedding   
   of   graphs).      
      Then   $\varphi$  induces  a       double-persistent  embedding  
   of  double-filtered  simplicial  complexes 
   \begin{eqnarray*} 
   \varphi(-,-):    {\rm  Ind}({   G}, -,-)  \longrightarrow   {\rm   Ind} ({ G'}, -,-)  
   \end{eqnarray*}
   for  $1\leq  n<m\leq  m_0$   (resp.   for  $1\leq  n<m \leq\infty$)
     where  $n/2$    and  $m/2$  are the     parameters  
   in  the  double-persistence.
\end{corollary}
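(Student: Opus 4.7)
The plan is to mirror the proof of Corollary~\ref{co-au-1}, with Corollary~\ref{co-au-bza2} playing the role that Corollary~\ref{co-bza2} played there. For the immersion case, Corollary~\ref{co-au-bza2} provides, for each $k \geq 1$ and each pair of parameters $(n,m)$ with $1 \leq n < m \leq m_0$, an isometric embedding
\begin{eqnarray*}
\varphi_k(n/2, m/2)/\Sigma_k : {\rm Conf}_k(V_G, n/2, m/2)/\Sigma_k \longrightarrow {\rm Conf}_k(V_{G'}, n/2, m/2)/\Sigma_k
\end{eqnarray*}
sending the class $\{v_1, \ldots, v_k\}$ to $\{\varphi(v_1), \ldots, \varphi(v_k)\}$, and these embeddings are compatible with the two-parameter inclusions coming from Lemma~\ref{le-2503b2z}.

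By Definition~\ref{def-5.b2}, the $(k-1)$-simplices of ${\rm Ind}(G, n/2, m/2)$ are precisely the elements of ${\rm Conf}_k(V_G, n/2, m/2)/\Sigma_k$, and similarly for $G'$. Hence the family $\{\varphi_k(n/2, m/2)/\Sigma_k\}_{k \geq 1}$ assembles into a simplicial map
\begin{eqnarray*}
\varphi(n/2, m/2): {\rm Ind}(G, n/2, m/2) \longrightarrow {\rm Ind}(G', n/2, m/2)
\end{eqnarray*}
sending each vertex $v$ to $\varphi(v)$. Compatibility with the face operator is automatic, since removing a vertex from a simplex $\{v_1, \ldots, v_k\}$ only relaxes the pairwise distance constraints, so the image of any face is the corresponding face of $\{\varphi(v_1), \ldots, \varphi(v_k)\}$. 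Injectivity on the $(k-1)$-simplices of each fixed $k$ is inherited from the injectivity of $\varphi_k(n/2, m/2)/\Sigma_k$.

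For the double-persistence, I would invoke the commutative squares (the graph analogs of (\ref{diag-laz-1}) and (\ref{diag-laz-2})) implicit in Proposition~\ref{le-9.au-1q} and Corollary~\ref{co-au-bza2}, combined with the double-filtration from Corollary~\ref{le-7.2b-12m}, to conclude that the maps $\varphi(n/2, m/2)$ commute with both sets of inclusions in the double-filtrations of ${\rm Ind}(G, -, -)$ and of ${\rm Ind}(G', -, -)$. Hence they assemble into the claimed double-persistent simplicial embedding (\ref{eq-7c9-l1}). The embedding case is obtained identically, with the parameter range enlarged to $1 \leq n < m$ by appealing to the embedding version of Corollary~\ref{co-au-bza2}.

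The statement involves no substantive obstacle; it is a formal packaging of Corollary~\ref{co-au-bza2} via Definition~\ref{def-5.b2}. The only mildly delicate point is injectivity of the induced vertex map, which is guaranteed by the strong totally geodesic condition whenever we restrict attention to pairs of vertices that appear together in a common simplex: their mutual distance then lies in $(n, m] \subseteq (0, m_0]$ and is therefore preserved by $\varphi$, so distinct vertices within a simplex map to distinct vertices.
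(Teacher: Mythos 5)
Your proposal follows essentially the same route as the paper: the paper's proof simply says it is analogous to Corollary~\ref{co-au-1}, which in turn cites Corollary~\ref{co-bza2} (the digraph version of your Corollary~\ref{co-au-bza2}) together with Definition~\ref{def-5.b2} to assemble the unordered configuration space embeddings into a simplicial embedding of the independence complexes. You have filled in the details the paper leaves implicit (face-map compatibility, injectivity, and double-persistence commutativity), and these details are all correct.
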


    \begin{proof}
The  proof  is  analogous  with Corollary~\ref{co-au-1}.  
\end{proof}

Summarizing  Proposition~\ref{le-9.a1q},  Corollary~\ref{co-au-1}, 
 Proposition~\ref{le-9.au-1q}  and  Corollary~\ref{co-au-2},  
we  obtain  Theorem~\ref{th-main-intro-2}.

  \subsection{Geometric  realizations  of  the  independence  complexes}\label{s-7}

 Let  $X $  be  a  topological  space.  
 An  embedding  $f:  X\longrightarrow   \mathbb{R}^N$  is  called  
 {\it  affinely  $k$-regular}  if  for  any  distinct  $k$-points  $x_1,\ldots,  x_k\in  X$,  
 their  images  $f(x_1)$,  $\ldots$,  $f(x_k)$  are  affinely  independent
 (cf.  \cite{regular}).  
 Let  $d:  X\times  X\longrightarrow  [0, +\infty]$  be  a  metric  on  $X$.  
 For  any  $0\leq r<s\leq  +\infty$, 
 we  say  that  an  embedding  $f:  X\longrightarrow   \mathbb{R}^N$
 is  {\it  affinely  $k$-regular}  with respect  to  $(r,s]$
  if   for  any  distinct  $k$-points  $x_1,\ldots,  x_k\in  X$ such  that  
  $2r< d(x_i,x_j)\leq  2s$  where  $1\leq  i<  j\leq  k$,
 their  images  $f(x_1)$,  $\ldots$,  $f(x_k)$  are  affinely  independent.  
   
   \begin{proposition}\label{pr-geo-1}
  For  any  graph  $\vec  G$  with  its  underlying  graph  $G$
  and  any  $1\leq  n<m\leq  \infty$,  
 \begin{enumerate}[(1)]
 \item
 there  exists an   affinely  $k$-regular  embedding  
 $f:  (V_{\vec  G},d_{\vec  G})\longrightarrow  \mathbb{R}^N$  
   with respect  to  $(n/2,m/2]$ 
 if  and  only  if   ${\rm  sk}^{k-1}{\rm  Ind}(\vec  G, n/2,m/2)$,
 the  $(k-1)$-skeleton  of  ${\rm  Ind}(\vec  G, n/2,m/2)$, 
  has  a  geometric  realization  in   $ \mathbb{R}^N$;     
 
 \item
  there  exists an   affinely  $k$-regular  embedding  
 $f:  (V_{ G},d_{ G})\longrightarrow  \mathbb{R}^N$  
   with respect  to  $(n/2,m/2]$ 
 if  and  only  if   ${\rm  sk}^{k-1}{\rm  Ind}( G, n/2,m/2)$,
 the  $(k-1)$-skeleton  of  ${\rm  Ind}(  G, n/2,m/2)$, 
  has  a  geometric  realization  in   $ \mathbb{R}^N$. 
 \end{enumerate}
 \end{proposition}
 
 \begin{proof}
 (1)  Suppose  $f:  (V_{\vec  G},d_{\vec G})\longrightarrow  \mathbb{R}^N$
 is   an   affinely  $k$-regular  embedding
  with respect  to  $(n/2,m/2]$.  
  Then  for  any  simplex  $\sigma\in  {\rm  sk}^{k-1}{\rm  Ind}(\vec  G, n/2,m/2)$, 
  the  image  of  its  vertices   $f(\sigma)=\{f(v)\mid  v\in \sigma\}$  are  affinely  independent. 
  Thus  $f$  induces  a   geometric  realization  of  ${\rm  sk}^{k-1}{\rm  Ind}(\vec  G, n/2,m/2)$.

   Conversely,  suppose   ${\rm  sk}^{k-1}{\rm  Ind}( \vec  G, n/2,m/2)$ has  a  
    geometric  realization  in   $ \mathbb{R}^N$.  
    Note  that  the  vertex  set  of  ${\rm  sk}^{k-1}{\rm  Ind}( \vec  G, n/2,m/2)$  
    is   $V_{\vec  G}$.  
    Thus  there  exists  an  embedding  
    $f:  V_{\vec  G}\longrightarrow   \mathbb{R}^N$  such  that  
    for  any  distinct  $k$-vertices  $v_1,\ldots,  v_k\in  V_{\vec  G}$,  
    if  they  span a  simplex  in  ${\rm  sk}^{k-1}{\rm  Ind}( \vec  G, n/2,m/2)$,  
    then  their  images  $f(v_1),\ldots,  f(v_k)$  are  affinely  independent.  
    Note  that  $v_1,\ldots,  v_k\in  V_{\vec  G}$ 
      span a  simplex  in  ${\rm  sk}^{k-1}{\rm  Ind}( \vec  G, n/2,m/2)$
      if  and  only  if  $n< d_{\vec   G}(x_i,x_j)\leq  m$  for  any  $1\leq  i<  j\leq  k$.  
      Thus  $f$  
 is   an   affinely  $k$-regular  embedding
  with respect  to  $(n/2,m/2]$.

 (2)      The  proof  of  (2)  is  an  analog  of  (1).  
 \end{proof}
 
\begin{corollary}\label{co-bofij-1}
  For  any  graph  $\vec  G$  with  its  underlying  graph  $G$
  and  any  $1\leq  n<m\leq  \infty$,  
 \begin{enumerate}[(1)]
 \item
         an   affinely  $k$-regular  embedding  
 $f:  (V_{ G},d_{ G})\longrightarrow  \mathbb{R}^N$  
   with respect  to  $(n/2, m/2]$  induces  
    an   affinely  $k$-regular  embedding  
 $f:  (V_{\vec  G},d_{\vec  G})\longrightarrow  \mathbb{R}^N$  
   with respect  to  $(n/2, m/2]$;
   \item
     an   affinely  $k$-regular  embedding  
 $f:  (V_{\vec  G},d_{\vec  G})\longrightarrow  \mathbb{R}^N$  
   with respect  to  $(n/2, m/2]$  induces  an   affinely  $k$-regular  embedding  
 $f:  (V_{ G},d_{ G})\longrightarrow  \mathbb{R}^N$
 with respect  to  $(n/2, m/2]$.    
\end{enumerate}
\end{corollary}

\begin{proof}
The  corollary  follows  from Proposition~\ref{pr-3.v1}  and  
Proposition~\ref{pr-geo-1}.  
\end{proof}

  \begin{corollary}
 Let  $\varphi:         \vec  G\longrightarrow       \vec  G'$  be  a  strong   totally geodesic  immersion  
   of   digraphs  with radius  $m_0/2$  (resp.  a  strong   totally geodesic  embedding   
   of   digraphs).     Then  
    for  $1\leq  n<m\leq  m_0$   (resp.   for  $1\leq  n<m $), 
    an  affinely   $k$-regular  embedding  $f:  (V_{\vec  G'},  d_{\vec  G'})\longrightarrow 
     \mathbb{R}^N$
    with respect  to  $(n/2,m/2]$
    induces 
     an  affinely   $k$-regular  embedding 
      $f:  (V_{\vec  G},  d_{\vec  G})\longrightarrow  \mathbb{R}^N$ 
      with respect  to  $(n/2,m/2]$.  
\end{corollary}

\begin{proof}
The  corollary  follows  from  Corollary~\ref{co-au-1}.  
\end{proof}

\begin{corollary}
 Let  $\varphi:         G\longrightarrow        G'$  be  a  strong   totally geodesic  immersion  
   of   graphs  with radius  $m_0/2$  (resp.  a  strong   totally geodesic  embedding   
   of   graphs).     Then  
    for  $1\leq  n<m\leq  m_0$   (resp.   for  $1\leq  n<m $), 
    an  affinely   $k$-regular  embedding  $f:  (V_{G'},  d_{G'})\longrightarrow  \mathbb{R}^N$
    with respect  to  $(n/2,m/2]$
    induces 
     an  affinely   $k$-regular  embedding  $f:  (V_{G},  d_{G})\longrightarrow  \mathbb{R}^N$ 
      with respect  to  $(n/2,m/2]$.  
\end{corollary}

\begin{proof}
The  corollary  follows  from  Corollary~\ref{co-au-2}.  
\end{proof}

\section{Path   independence  complexes  for  digraphs  and  their  associated  chain  complexes }
\label{ssa-4}

In this  section,  we  consider the  directions  on the arcs  and  
introduce   the  path  independence  complexes  for  digraphs.  
Then  we  apply  the  infimum  chain  complex  and  the  supremum  chain  complex  
to  the  path  independence  complexes  to  give  associated  chain  complexes  for  the  
path  independence  complexes.  
In   Subsection~\ref{ssa-4.1},  we  introduce  the  path  configuration  spaces  and  path  independence  complexes.  
In   Subsection~\ref{ss2.1},  
we briefly  review  the  infimum  and  the   supremum  chain  complexes.  
In  Subsection~\ref{ssa-4.3},  
we  prove   canonical  embeddings  from  the  infimum  and  the  supremum   chain  complexes 
of  the  path  independence  complex  of  the  underlying graph  into   the  infimum  and  the  supremum   chain  complexes 
of  the  path  independence  complex  of  the  digraph.  
Moreover,  for  strong  totally geodesic  embeddings  of  (di)graphs,  
we prove  induced  monomorphisms  between  the   infimum  and  the  supremum   chain  complexes.

       \subsection{The  path  configuration  spaces  and  the  path  independence  complexes }
\label{ssa-4.1}

    Let  $\vec  G$     be  a    digraph. 
    Let   $k\geq  1$  and   let  $1\leq  n<m\leq  \infty$.  
   
     \begin{definition}\label{def-250326}
     We  define  
   an  {\it  independent     elementary  $k$-path}  on  $\vec  G$     with  constraint  interval  
   $(n/2,  m/2]$
  to  be       an   elementary     $k$-path  $v_0v_1\ldots  v_k$   on  $V_{\vec  G}$   
   such  that  
   $n<d_{\vec G}(v_{i-1}, v_i) \leq  m$    for  any  $1\leq  i \leq  k$.  
   Equivalently,  an   independent   elementary  $k$-path   on  $\vec  G$     
     with  constraint  interval  
   $(n/2,  m/2]$  
   is   an  allowed  elementary  $k$-path  on  the  $1$-skeleton  
   ${\rm  sk}^1({\rm  Ind} (\vec G,  n/2,  m/2))$
    of  ${\rm  Ind} (\vec G,  n/2,  m/2)$.   
     \end{definition}
     
       \begin{definition}\label{def-path-conf-12}
   We  define    the  $k$-th   {\it  ordered     path  configuration  space}   of  $\vec  G$  with  
   constraint  interval  $(n/2,  m/2]$  to  be  the  metric space 
    \begin{eqnarray}\label{eq-99.a1}
\overrightarrow{ {\rm    Conf}}_k(V_{\vec  G}, \frac{n}{2},  \frac{m}{2})=
  \{(v_1,\ldots, v_k)\in  (V_{\vec G})^k\mid
     n< d_{\vec  G}(v_i,v_{i+1}) \leq  m  {\rm~for~any~}   1\leq  i\leq  k-1 \}        
  \end{eqnarray} 
   consisting  of  all  the  independent     elementary  $(k-1)$-paths  on  $\vec  G$     with  constraint  interval  
   $(n/2,  m/2]$,  with  the  product  metric  $(d_{\vec  G})^k$.    
   \end{definition}

    \begin{lemma}\label{le-25apr1}
  For  any  digraph  $\vec  G$  and  any  positive  integer  $k$,    
  we have  a  $\mathbb{Z}_2$-equivariant    double-filtration   
    \begin{eqnarray}\label{eq-apr-e.1}
    \overrightarrow {{\rm  Conf}}_k(V_{\vec  G}, -,-)=
    \Big\{ \overrightarrow{\rm  Conf}_k(V_{\vec  G}, \frac{n}{2}, \frac{m}{2})
    ~\Big|~  1\leq  n<m\leq  \infty \Big\}
    \end{eqnarray}
       such  that 
    \begin{eqnarray*}
     \overrightarrow{\rm  Conf}_k(V_{\vec  G}, \frac{n_1}{2}, \frac{m}{2})  \supseteq 
      \overrightarrow{\rm  Conf}_k(V_{\vec  G}, \frac{n_2}{2}, \frac{m}{2})
    \end{eqnarray*}
   is  an   isometric  embedding  for  any  $n_1<n_2<m$
    and 
        \begin{eqnarray*}
     \overrightarrow{\rm  Conf}_k(V_{\vec  G}, \frac{n }{2}, \frac{m_1}{2})  \subseteq 
     \overrightarrow{\rm  Conf}_k(V_{\vec  G}, \frac{n }{2}, \frac{m_2}{2})
    \end{eqnarray*}
      is  an   isometric  embedding  for  any  $n<m_1<m_2$. 
    \end{lemma}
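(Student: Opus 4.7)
The plan is to mirror the proof of Lemma~\ref{le-2503a1z}, adapted to the \emph{path} configuration spaces. Since the defining condition in (\ref{eq-99.a1}) is a constraint only on the consecutive pairs $(v_i,v_{i+1})$ rather than on all pairs, the monotonicity in each of the two parameters is immediate from the definition. First I would verify that if $n_1<n_2<m$, then every tuple $(v_1,\ldots,v_k)$ satisfying $n_2<d_{\vec G}(v_i,v_{i+1})\leq m$ for $1\leq i\leq k-1$ automatically satisfies $n_1<d_{\vec G}(v_i,v_{i+1})\leq m$, giving the set-theoretic inclusion $\overrightarrow{\rm Conf}_k(V_{\vec G},n_2/2,m/2)\subseteq \overrightarrow{\rm Conf}_k(V_{\vec G},n_1/2,m/2)$. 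The upper-parameter inclusion for $n<m_1<m_2$ is completely analogous.

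Second, I would verify that each inclusion is an isometric embedding. Since every space $\overrightarrow{\rm Conf}_k(V_{\vec G},n/2,m/2)$ is by definition a subset of $(V_{\vec G})^k$ equipped with the restriction of the product metric $(d_{\vec G})^k$, the inclusions are tautologically isometric; no additional computation is needed.

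Third, I would treat the $\mathbb{Z}_2$-equivariance. The $\mathbb{Z}_2$-action is the path-reversal
\begin{eqnarray*}
\tau(v_1,\ldots,v_k)=(v_k,v_{k-1},\ldots,v_1).
\end{eqnarray*}
Because the distance $d_{\vec G}$ in Definition~\ref{def--3} is defined as the minimum length of allowed elementary paths from $u$ to $v$ \emph{or} from $v$ to $u$, we have $d_{\vec G}(v_i,v_{i+1})=d_{\vec G}(v_{i+1},v_i)$. Hence the constraint $n<d_{\vec G}(v_i,v_{i+1})\leq m$ for $1\leq i\leq k-1$ is preserved under $\tau$, so each $\overrightarrow{\rm Conf}_k(V_{\vec G},n/2,m/2)$ is $\mathbb{Z}_2$-invariant. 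Moreover, $\tau$ is clearly an isometry of $((V_{\vec G})^k,(d_{\vec G})^k)$, and the inclusions are set-theoretic identities, so they commute with $\tau$. Assembling these observations yields the double-filtration (\ref{eq-apr-e.1}) with the required properties.

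The argument is therefore essentially routine; the only point that requires any care is the $\mathbb{Z}_2$-invariance, which reduces to the symmetry built into Definition~\ref{def--3} (the distance being defined via paths in \emph{either} direction). This is the step worth flagging explicitly, since the path-reversal involution would fail to preserve the sets if one used a strictly directed notion of distance instead.
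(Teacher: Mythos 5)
Your proposal is correct and takes essentially the same approach as the paper: the paper's proof likewise introduces the $\mathbb{Z}_2$-action by path reversal, observes it is isometric, and concludes the double-filtration is $\mathbb{Z}_2$-equivariant, leaving the monotonicity of the inclusions and the isometry of the subspace metrics implicit. You have merely spelled out the routine details, and you correctly flag the one substantive point (the symmetry $d_{\vec G}(u,v)=d_{\vec G}(v,u)$ built into Definition~\ref{def--3}), which is exactly what makes the path-reversal action well-defined on each $\overrightarrow{\rm Conf}_k(V_{\vec G},n/2,m/2)$.
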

    
    \begin{proof}
    Let   $\mathbb{Z}_2$  act  on  $  \overrightarrow {{\rm  Conf}}_k(V_{\vec  G}, -,-)$  
    such that the  nontrivial  element  in  $\mathbb{Z}_2$ sends  each  $(k-1)$-path  
    $v_0v_1\ldots  v_{k-1}$  to  its  inverse  $v_{k-1}v_{k-2}\ldots  v_0$.   
    The  $\mathbb{Z}_2$-action  is  isometric  with  respect to  $(d_{\vec  G})^k$.  
    The   double-filtration   of  $  \overrightarrow {{\rm  Conf}}_k(V_{\vec  G}, -,-)$  is  
    $\mathbb{Z}_2$-equivariant.  
    \end{proof}

    \begin{definition}\label{def-25apr-5}
     We  define   the  {\it  path  independence  complex}        of  $\vec  G$ 
       with  
   constraint  interval  $(n/2,  m/2]$  
    to  be  the   union   
    \footnote[4]{   
  In  general,  (\ref{eq-apr-7.a1})  may  not  be  a  simplicial  complex.  
 }  
  \begin{eqnarray}\label{eq-apr-7.a1}
 \overrightarrow{ {\rm  Ind}} (\vec  G,  \frac{n}{2},\frac{m}{2})=  \bigcup_{k\geq  1}  
  \overrightarrow{ {\rm    Conf}}_k(V_{\vec  G}, \frac{n}{2},\frac{m}{2} 
   ).     
  \end{eqnarray}
  We  define  an  {\it  automorphism}  of  (\ref{eq-apr-7.a1})  
  to  be  a  self-bijection   $\varphi$   of   $V_{\vec  G}$  such  that 
  for
  any   path  
    $v_0v_1\ldots  v_{k}$  in  $\overrightarrow{ {\rm  Ind}} (\vec  G,   {n}/{2}, {m}/{2}) $,  
    its  image 
     $\varphi(v_0)\varphi(v_1)\ldots  \varphi(v_{k})$
    is   still  a  path  in   $\overrightarrow{ {\rm  Ind}} (\vec  G, {n}/{2}, {m}/{2}) $.  
    We  define  
    the  automorphism  group  ${\rm  Aut}(\overrightarrow{ {\rm  Ind}} (\vec  G,   {n}/{2}, {m}/{2}) )$ 
    to  be  the  group  of  all  the  automorphisms  of  (\ref{eq-apr-7.a1}).   
  \end{definition}

  \begin{corollary}\label{co-25apr-2}
   For  any  digraph  $\vec  G$,    
  we have  a  $\mathbb{Z}_2$-equivariant    double-filtration 
    \begin{eqnarray}
    \label{eq-apr-e.2}
    \overrightarrow {{\rm   Ind}} ({\vec  G}, -,-)=
    \Big\{ \overrightarrow{\rm   Ind} ({\vec  G}, \frac{n}{2}, \frac{m}{2})
    ~\Big|~  1\leq  n<m\leq  \infty \Big\}
  \end{eqnarray}
    such  that 
    \begin{eqnarray*}
     \overrightarrow{\rm   Ind} ( {\vec  G}, \frac{n_1}{2}, \frac{m}{2})  \supseteq 
      \overrightarrow{\rm   Ind} ( {\vec  G}, \frac{n_2}{2}, \frac{m}{2})
    \end{eqnarray*}
   for  any  $n_1<n_2<m$
    and 
        \begin{eqnarray*}
     \overrightarrow{\rm   Ind}_k( {\vec  G}, \frac{n }{2}, \frac{m_1}{2})  \subseteq 
     \overrightarrow{\rm   Ind}_k( {\vec  G}, \frac{n }{2}, \frac{m_2}{2})
    \end{eqnarray*}
     for  any  $n<m_1<m_2$. 
  \end{corollary}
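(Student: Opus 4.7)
The plan is to derive this corollary directly from Lemma~\ref{le-25apr1} by taking the union over $k\geq 1$ as prescribed by Definition~\ref{def-25apr-5}. Since the path independence complex is defined level-wise as a union of path configuration spaces indexed by $k$, any structural properties (filtration, equivariance) that hold uniformly at each level $k$ will automatically descend to the union.

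First I would unpack the definition: by (\ref{eq-apr-7.a1}), $\overrightarrow{{\rm Ind}}(\vec G, n/2, m/2) = \bigcup_{k\geq 1} \overrightarrow{{\rm Conf}}_k(V_{\vec G}, n/2, m/2)$. Next, for the $\mathbb{Z}_2$-action, I would observe that the path-reversal action defined in the proof of Lemma~\ref{le-25apr1} preserves the length $k$ of each elementary path, so it acts on each $\overrightarrow{{\rm Conf}}_k(V_{\vec G}, n/2, m/2)$ separately and hence assembles to a well-defined $\mathbb{Z}_2$-action on the union $\overrightarrow{{\rm Ind}}(\vec G, n/2, m/2)$.

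For the double-filtration property, I would take the two inclusions from Lemma~\ref{le-25apr1}:
\begin{eqnarray*}
\overrightarrow{\rm Conf}_k(V_{\vec G}, \tfrac{n_1}{2}, \tfrac{m}{2}) \supseteq \overrightarrow{\rm Conf}_k(V_{\vec G}, \tfrac{n_2}{2}, \tfrac{m}{2}) \quad (n_1<n_2<m),\\
\overrightarrow{\rm Conf}_k(V_{\vec G}, \tfrac{n}{2}, \tfrac{m_1}{2}) \subseteq \overrightarrow{\rm Conf}_k(V_{\vec G}, \tfrac{n}{2}, \tfrac{m_2}{2}) \quad (n<m_1<m_2),
\end{eqnarray*}
and simply take the union over $k\geq 1$ on both sides, using the elementary fact that unions are monotone: $\bigcup_k A_k \subseteq \bigcup_k B_k$ whenever $A_k\subseteq B_k$ for each $k$. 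This yields the two required inclusions of the corollary. Finally, the $\mathbb{Z}_2$-equivariance of these inclusions is inherited from the level-wise equivariance provided by Lemma~\ref{le-25apr1}, since path-reversal commutes with the canonical inclusions of configuration spaces at different parameters.

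There is no real obstacle here: the corollary is essentially a formal consequence of the lemma together with the union definition in (\ref{eq-apr-7.a1}). The only thing worth being slightly careful about is that $\overrightarrow{{\rm Ind}}(\vec G, n/2, m/2)$ need not itself be a simplicial complex (as the subsequent remark notes), so one should speak of set-theoretic inclusions and $\mathbb{Z}_2$-equivariance rather than simplicial maps — but this causes no difficulty for the statement as written.
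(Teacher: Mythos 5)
Your argument is correct and matches the paper's proof, which simply cites Lemma~\ref{le-25apr1} together with Definition~\ref{def-25apr-5}; you have merely spelled out the routine steps (taking unions over $k$, noting that the length-preserving $\mathbb{Z}_2$-action assembles) that the paper leaves implicit. No discrepancy.
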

  
  \begin{proof}
  The  proof  follows  from  Lemma~\ref{le-25apr1}   and  Definition~\ref{def-25apr-5}.  
  \end{proof}

  Let  $G$  be  a  graph.  
  Similar  with  Definition~\ref{def-250326},  
  we  define  
   an  {\it  independent     elementary  $k$-path}  on  $  G$     with  constraint  interval  
   $(n/2,  m/2]$
  to  be       an   elementary     $k$-path  $v_0v_1\ldots  v_k$   on  $V_{  G}$   
   such  that  
   $n<d_{ G}(v_{i-1}, v_i) \leq  m$    for  any  $1\leq  i \leq  k$.   
  Similar  with  Definition~\ref{def-path-conf-12},  
   we  define    the  $k$-th   {\it  ordered     path  configuration  space}   of  $  G$  with  
   constraint  interval  $(n/2,  m/2]$  by  
    \begin{eqnarray} \label{eq-250902-1}
\overrightarrow{ {\rm    Conf}}_k(V_{  G}, \frac{n}{2},  \frac{m}{2})=
  \{(v_1,\ldots, v_k)\in  (V_{\vec G})^k\mid
     n< d_{ G}(v_i,v_{i+1}) \leq  m  {\rm~for~any~}   1\leq  i\leq  k-1 \}.         
  \end{eqnarray} 
  We  have  a   $\mathbb{Z}_2$-equivariant  double-filtration  $\overrightarrow {{\rm  Conf}}_k(V_{   G}, -,-)$  of  (\ref{eq-250902-1}).  
  Similar  with  Definition~\ref{def-25apr-5},  
  we  define   the  {\it  path  independence  complex}        of  $  G$ 
       with  
   constraint  interval  $(n/2,  m/2]$  
    by   
  \begin{eqnarray} \label{eq-may-7.a2}
 \overrightarrow{ {\rm  Ind}} ( G,  \frac{n}{2},\frac{m}{2})=  \bigcup_{k\geq  1}  
  \overrightarrow{ {\rm    Conf}}_k(V_{   G}, \frac{n}{2},\frac{m}{2}.   
   ).     
  \end{eqnarray}
We  have  
a   $\mathbb{Z}_2$-equivariant  
double-filtration  $\overrightarrow {{\rm   Ind}} ( {   G}, -,-)$  of  (\ref{eq-may-7.a2}).
 We  define  an  {\it  automorphism}  of  (\ref{eq-may-7.a2})  
  to  be  a  self-bijection   $\varphi$   of   $V_{   G}$  such  that 
  for
  any   path  
    $v_0v_1\ldots  v_{k}$  in  $\overrightarrow{ {\rm  Ind}} (  G,   {n}/{2}, {m}/{2}) $,  
    its  image 
     $\varphi(v_0)\varphi(v_1)\ldots  \varphi(v_{k})$
    is   still  a  path  in   $\overrightarrow{ {\rm  Ind}} ( G, {n}/{2}, {m}/{2}) $.  
    We  define  
    the  automorphism  group  ${\rm  Aut}(\overrightarrow{ {\rm  Ind}} (  G,   {n}/{2}, {m}/{2}) )$ 
    to  be  the  group  of  all  the  automorphisms  of  (\ref{eq-may-7.a2}).

\begin{corollary}\label{pr-05-23-7}
For  any  digraph  $\vec  G$   with  its   underlying  graph  $G$   and  any 
   $1\leq  n<m \leq  \infty$,  we  have    canonical   group  homomorphisms 
\begin{eqnarray}\label{eq-250522-b1}
\beta_{\vec  G}(\frac{n}{2},\frac{m}{2}): &&  {\rm  Aut}(\vec   G)\longrightarrow    {\rm  Aut}( \overrightarrow{\rm Ind}(\vec  G,\frac{n}{2},\frac{m}{2})), 
 \\
\beta_{ G}(\frac{n}{2},\frac{m}{2}):  && {\rm  Aut}(   G)\longrightarrow    
{\rm  Aut}( \overrightarrow{\rm Ind}(   G,\frac{n}{2},\frac{m}{2})).  
\label{eq-250522-b2}
\end{eqnarray}
\end{corollary}

\begin{proof}
The  proof  is     analogous  with  Corollary~\ref{pr-05-05-22-1}. 
\end{proof}

  \begin{example}
\label{ex-97.a0}
   Consider  the  line  digraph  $\vec  L$  (cf.  Example~\ref{ex-3.3}) 
    and  the  zigzag  digraph  $\vec  Z$  (cf.  Example~\ref{ex-3.5})  whose 
      underlying  graphs  are   the  line  graph  
    $L$  (cf.  Example~\ref{ex-0.8}).   
      The   path  independence  complex  of  $\vec  L$  is    
      \begin{eqnarray*}
     \overrightarrow  {\rm  Ind}(\vec  L, \frac{n}{2},\frac{m}{2}) 
      =  \bigcup_{k=0}^\infty 
      \big\{(v_{i_0},v_{i_1}, \ldots, v_{i_k})\mid
         n <i_{j}-i_{j-1}\leq  m{\rm~for~any~}   
      1\leq  j\leq  k \big\} 
      \end{eqnarray*}
      such  that  
      \begin{eqnarray*}
      \overrightarrow {\rm  Ind}(\vec  L, \frac{n}{2},\infty) 
      =\bigcup_{n<m<\infty }{\rm  Ind}(\vec  L, \frac{n}{2},\frac{m}{2}).   
      \end{eqnarray*}
      The   path  independence  complex  of  $\vec  Z$  is     
      \begin{eqnarray*}
       \overrightarrow  {\rm  Ind}(\vec  Z, \frac{n}{2}, \frac{m}{2})= \emptyset 
      \end{eqnarray*}
      for  any  $n<m<\infty$  
      and 
      \begin{eqnarray*}
     \overrightarrow  {\rm  Ind}(\vec  Z, \frac{n}{2},\infty)=
        \bigcup_{k=0}^\infty 
        \big\{(v_{i_0},v_{i_1}, \ldots, v_{i_k})\mid
           1 <i_{j}-i_{j-1}{\rm~for~any~}   
      1\leq  j\leq  k \big\},   
      \end{eqnarray*}
      which  does  not  depend   on  the  choice  of  $n\geq  1$.  
      The  path   independence  complex  of  $L$  is     
      \begin{eqnarray*}
       \overrightarrow{\rm  Ind}( L, \frac{n}{2},\frac{m}{2})=   \overrightarrow {\rm  Ind}(\vec  L, \frac{n}{2},\frac{m}{2}).      
      \end{eqnarray*}
\end{example}
  
  \begin{example}
\label{ex-97.a1}
Let  $\mathbb{Z}^l$  be   the  lattice   
$\{\vec  z=  (z_1,\ldots,z_l)\mid  z_1,\ldots,z_l\in \mathbb{Z}\}$  in  $\mathbb{R}^l$.   
Let  $\vec  L^l$  be  the  digraph  with   vertices    $  \mathbb{Z}^l$ 
and  arcs  
\begin{eqnarray*}
(z_1,\ldots,z_l)\to (z_1,\ldots, z_i+1, \ldots, z_l),~~~~~~1\leq  i\leq  l 
\end{eqnarray*}
  for any  
   $\vec  z= (z_1,\ldots,z_l)$  in   $  \mathbb{Z}^l$.   
 The  underlying  graph  $L^l$  of  $\vec  L^l$  
 is  the  graph   with   vertices    $  \mathbb{Z}^l$ 
and  edges    
 \begin{eqnarray*}
\{(z_1,\ldots,z_l), (z_1,\ldots, z_i+1, \ldots, z_l)\},~~~~~~1\leq  i\leq  l 
\end{eqnarray*}
for any  
   $\vec  z= (z_1,\ldots,z_l)$  in   $  \mathbb{Z}^l$.  
   We  have  $d_{\vec  L^l}  =  d_{L^l}$  given by  
   \begin{eqnarray*}
   d_{\vec  L^l}(\vec  z,  \vec { z'})  =  d_{ L^l}(\vec  z,  \vec { z'})  =
    \sum_{i=1}^l  |z_i-z'_i|,     ~~~~~~\vec  z,  \vec { z'} \in  \mathbb{Z}^l.  
   \end{eqnarray*}
    The  path  independence  complexes  of  $\vec  L^l$   and  $L^l$  are     equal
   \begin{eqnarray*}
    \overrightarrow  {\rm  Ind}(\vec  L^l, \frac{n}{2},  \frac{m}{2}) =
        \overrightarrow{\rm  Ind}(   L^l,  \frac{n}{2},  \frac{m}{2}) 
        \end{eqnarray*}
    which  are  given  by 
      \begin{eqnarray*}
      \bigcup_{k=0}^\infty \big\{    (\vec  z(0),\cdots, \vec z(k))\mid 
       n<   \sum_{i=1}^l   |z(j)_i -z(j-1)_i| \leq  m {\rm~for~any~} 1\leq  j\leq  k \big\}  
      \end{eqnarray*}
      with  $\vec z(j)= (z(j)_1,\ldots, z(j)_l)$  in  $\mathbb{Z}^l$  for  $0\leq  j\leq  k$.   
      For  any  $0\leq  t\leq  l$,  we  have  strong  totally  geodesic  embeddings  of  (di)graphs  
 \begin{eqnarray*} 
 \varphi^t:  && \vec  L^m\longrightarrow  \vec  L^{m+1}, \\
                 &&  L^m\longrightarrow    L^{m+1}  
 \end{eqnarray*}
 sending  $(z_1,\ldots,z_l)$  to $(z_1,\ldots,z_{t}, 0,  z_{t+1}, \ldots, z_l)$.  
This  induces  double-persistent  embeddings       
 \begin{eqnarray*}
 \varphi^t(-,-):      {\rm  Ind}(\vec  L^m, -,-) \longrightarrow     {\rm  Ind}(\vec  L^{m+1}, -,-).  
 \end{eqnarray*}
\end{example}

\begin{example}
\label{ex-97.a2}
  Consider  the  cyclic  digraph  $\vec  C_r$ 
   with  its  underlying  graph  $C_r$   (cf.  Example~\ref{ex-3.3}).  
    The   path  independence  complexes  of  $\vec  C_r$  and  $C_r$  are  equal
     \begin{eqnarray*}
    \overrightarrow  {\rm  Ind}(\vec  C^r, \frac{n}{2},  \frac{m}{2}) =
        \overrightarrow{\rm  Ind}(   C^r,  \frac{n}{2},  \frac{m}{2}) 
        \end{eqnarray*}
    which  are  given by 
      \begin{eqnarray*}
          \bigcup_{k=0}^{[r/n]-1} \big\{(v_{i_0},v_{i_1}, \ldots, v_{i_k})\mid  n <i_{j}-i_{j-1}\leq  m{\rm~for~any~}   
      1\leq  j\leq  k{\rm~and~}     n <  i_0+r-i_k \leq  m  \big\}.     
      \end{eqnarray*}
      Here  $0\leq  i_0<\cdots  <i_k < r$.   In  particular,  
         if  $n>r/2$,  then  these   path  independence  complexes    are    the  discrete  vertex  set  $\mathbb{Z}/r \mathbb{Z}$,  which  is     
         of  dimension  zero.  
\end{example}
  
     \begin{proposition}
    \label{le-97.08.b}
     For  any  digraph  $\vec  G$  with   its  underlying graph  $G$, 
     we  have   
    persistent       $\mathbb{Z}_2$-equivariant  isometric  embeddings  
   of   filtered  metric  spaces  
   \begin{eqnarray} \label{eq-3.path.19}
  I_{\vec  G,k}(-,\infty): && (\overrightarrow {\rm  Conf}_k(V_{  G}, -,\infty),   (d_{  G})^k) \longrightarrow 
    ( \overrightarrow{\rm  Conf}_k(V_{\vec  G}, -,\infty),   (d_{\vec  G})^k),\\
       \label{eq-3.path.19999}
  J_{\vec  G,k}(\frac{1}{2},-): && (\overrightarrow {\rm  Conf}_k(V_{\vec  G}, \frac{1}{2},-),   (d_{\vec   G})^k) \longrightarrow 
    ( \overrightarrow{\rm  Conf}_k(V_{  G},\frac{1}{2},-),   (d_{ G})^k)
        \end{eqnarray}
   such  that  $ I_{\vec  G,k}(1/2,\infty)= J_{\vec  G,k}(1/2,\infty)^{-1}$  is  the  identity  map  
   satisfying
  the   commutative     diagrams  
    \begin{eqnarray}\label{eq-diag-path.1}
   & \xymatrix{
    \overrightarrow{\rm  Conf}_k(V_{  G}, -,\infty) \ar[rr]^-{ I_{\vec   G,k}(-,\infty)}  \ar[d]_-{ \sim/\mathbb{Z}_2} 
    && \overrightarrow{\rm  Conf}_k(V_{\vec  G}, -,\infty) \ar[d]^-{\sim /\mathbb{Z}_2} \\
     \overrightarrow{\rm  Conf}_k(V_{  G}, -,\infty)/\mathbb{Z}_2  \ar[rr]^-{ I_{\vec  G,k}(-,\infty)/\mathbb{Z}_2} 
      && \overrightarrow{\rm  Conf}_k(V_{\vec  G}, -,\infty)/\mathbb{Z}_2, 
    }\\
  &  \xymatrix{
    \overrightarrow{\rm  Conf}_k(V_{ \vec G}, \frac{1}{2},-) \ar[rr]^-{ J_{ \vec  G,k}(\frac{1}{2},-)}  \ar[d]_-{ \sim/\mathbb{Z}_2} 
    && \overrightarrow{\rm  Conf}_k(V_{  G}, \frac{1}{2},-) \ar[d]^-{\sim /\mathbb{Z}_2} \\
     \overrightarrow{\rm  Conf}_k(V_{  \vec  G},\frac{1}{2},-)/\mathbb{Z}_2  \ar[rr]^-{ J_{\vec  G,k}(\frac{1}{2},-)/\mathbb{Z}_2} 
      && \overrightarrow{\rm  Conf}_k(V_{  G}, \frac{1}{2},-)/\mathbb{Z}_2.  
    }
    \label{eq-diag-path.2}
    \end{eqnarray}
    \end{proposition}
    
    \begin{proof}
    The  proof  of  (\ref{eq-3.path.19})   is   analogous  with  Proposition~\ref{pr-3.a1}.  
    The  proofs  of  (\ref{eq-diag-path.1})  and (\ref{eq-diag-path.2}) are  analogous  with  Corollary~\ref{le-99.01}.  
    \end{proof}

\begin{corollary}\label{pr-path.v1}
   For  any  digraph  $\vec  G$   with   its  underlying graph  $G$,  
   we  have  
     $\mathbb{Z}_2$-equivariant    persistent   simplicial  embeddings   
     \begin{eqnarray}\label{eq-path.b79}
I_{\vec  G}(-,\infty):    && 
 \overrightarrow{\rm  Ind}( {  G},  -,\infty)  \longrightarrow   
 \overrightarrow{\rm  Ind}( {\vec  G}, -,\infty),\\
J_{\vec  G}(\frac{1}{2},-):    && 
 \overrightarrow{\rm  Ind}( {\vec  G},  \frac{1}{2},-)  \longrightarrow   
 \overrightarrow{\rm  Ind}( {  G}, \frac{1}{2},-)   
 \label{eq-path.b7999}
   \end{eqnarray}
   such  that $ I_{\vec  G}(1/2,  \infty)=J_{\vec  G}(1/2,  \infty)^{-1}$   is  the  identity  map.    
       \end{corollary}

\begin{proof}
The  corollary  follows  from   Definition~\ref{def-25apr-5}  and   Proposition~\ref{le-97.08.b}.  
\end{proof}

 \begin{proposition}\label{le-path.a1q}
    \begin{enumerate}[(1)]
    \item
     Let  $\varphi:  \vec  G\longrightarrow  \vec  G'$  be  a  strong   totally geodesic  immersion  
   of  digraphs  with radius  $m_0/2$  (resp.   a  strong   totally geodesic  embedding   
   of   digraphs).    
      Then   $\varphi$  induces  a  double-persistent      $\mathbb{Z}_2$-equivariant     isometric   embedding  
   of  double-filtered  metric  spaces 
   \begin{eqnarray*} 
   \Phi_k(-,-):    (\overrightarrow {\rm  Conf}_k(V_{\vec  G}, -,-),  (d_{\vec  G})^k) \longrightarrow   
   \overrightarrow{\rm  Conf}_k(V_{\vec  G'}, -,-),  (d_{\vec  G'})^k) 
   \end{eqnarray*}
   for  $1\leq  n<m\leq  m_0$    (resp.   for  $1\leq  n<m \leq  \infty$)
    where  $n/2$    and  $m/2$  are the     parameters  
   in  the  double-persistence;
   \item
   Let  $\varphi:   G\longrightarrow     G'$  be  a  strong   totally geodesic  immersion  
   of   graphs  with radius  $m_0/2$  (resp.   a  strong   totally geodesic  embedding   
   of     graphs).    
      Then   $\varphi$  induces  a  double-persistent     $\mathbb{Z}_2$-equivariant   isometric    embedding  
   of  double-filtered  metric  spaces 
   \begin{eqnarray*} 
   \Phi_k(-,-):    (\overrightarrow {\rm  Conf}_k(V_{  G}, -,-),  (d_{  G})^k) \longrightarrow   
   \overrightarrow{\rm  Conf}_k(V_{  G'}, -,-),  (d_{ G'})^k) 
   \end{eqnarray*}
   for  $1\leq  n<m\leq  m_0$    (resp.   for  $1\leq  n<m \leq  \infty$)
   where  $n/2$    and  $m/2$  are the     parameters  
   in  the  double-persistence.  
   \end{enumerate}  
\end{proposition}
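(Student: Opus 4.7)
The plan is to model the argument directly on Proposition~\ref{le-9.a1q}, replacing the constraint configuration spaces by the path configuration spaces and replacing the coordinate-permutation $\Sigma_k$-action by the reversal $\mathbb{Z}_2$-action. For each pair of parameters $(n,m)$ with $1\leq n<m\leq m_0$ in the immersion case (or $1\leq n<m$ in the embedding case), I would define
\begin{eqnarray*}
\Phi_k\Big(\frac{n}{2},\frac{m}{2}\Big)(v_1,\ldots,v_k) = (\varphi(v_1),\ldots,\varphi(v_k))
\end{eqnarray*}
componentwise, and then verify well-definedness, $\mathbb{Z}_2$-equivariance, the isometry property, and compatibility with the double persistence.

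For well-definedness in case~(1), any $(v_1,\ldots,v_k)\in \overrightarrow{\rm Conf}_k(V_{\vec G},\frac{n}{2},\frac{m}{2})$ satisfies $n<d_{\vec G}(v_i,v_{i+1})\leq m\leq m_0$ for each $1\leq i\leq k-1$, so Definition~\ref{def-2}(1) together with the immersion radius $m_0/2$ yields $d_{\vec G'}(\varphi(v_i),\varphi(v_{i+1}))=d_{\vec G}(v_i,v_{i+1})$, which places the image in $\overrightarrow{\rm Conf}_k(V_{\vec G'},\frac{n}{2},\frac{m}{2})$; in the embedding subcase the same equality holds without the hypothesis $m\leq m_0$. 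Case~(2) for graphs is identical via Definition~\ref{def-2}(2). The $\mathbb{Z}_2$-equivariance is immediate since the nontrivial element acts by reversing coordinates and $\Phi_k$ acts componentwise, so reversal and $\Phi_k$ manifestly commute. For the isometry with respect to the product metric, the identity
\begin{eqnarray*}
(d_{\vec G})^k((u_1,\ldots,u_k),(v_1,\ldots,v_k)) = (d_{\vec G'})^k((\varphi(u_1),\ldots,\varphi(u_k)),(\varphi(v_1),\ldots,\varphi(v_k)))
\end{eqnarray*}
reduces coordinatewise to $d_{\vec G}(u_j,v_j)=d_{\vec G'}(\varphi(u_j),\varphi(v_j))$ for each $j$, which follows again from Definition~\ref{def-2} in the range where the immersion hypothesis is active.

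Finally, for the persistence structure I would check commutativity of the two squares for $n_1\leq n_2<m$ and $n<m_1\leq m_2$, exactly as in the diagrams (\ref{diag-laz-1}) and (\ref{diag-laz-2}) of Proposition~\ref{le-9.a1q}: both the horizontal maps $\Phi_k$ and the vertical canonical inclusions are defined componentwise, so the diagrams commute automatically. The main (mild) obstacle is in the immersion subcase of the isometry claim: the cross-distance $d_{\vec G}(u_j,v_j)$ between coordinates from two distinct tuples is not a priori bounded by $m_0$, so the immersion hypothesis only guarantees the distance identity inside the relevant geodesic ball of radius $m_0/2$, with Lemma~\ref{le-2.91} supplying only an inequality outside. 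I would handle this exactly as the analogous step in Proposition~\ref{le-9.a1q}, restricting the isometry assertion to the filtration range in which Definition~\ref{def-2}(1) applies; in the embedding subcase and in the graph case~(2) with an embedding hypothesis no such restriction is required.
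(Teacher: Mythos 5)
Your approach is exactly the paper's: the published proof of this proposition consists solely of the remark that the arguments are analogous to Propositions~\ref{le-9.a1q} and~\ref{le-9.au-1q}, and your componentwise construction of $\Phi_k$ with the coordinate-permutation $\Sigma_k$-action replaced by the reversal $\mathbb{Z}_2$-action is precisely what that analogy produces. The well-definedness argument (using the bound $n<d_{\vec G}(v_i,v_{i+1})\leq m\leq m_0$ on adjacent within-tuple distances to invoke Definition~\ref{def-2}), the $\mathbb{Z}_2$-equivariance, and the commutative squares giving compatibility with the double-persistence are all as in the template.

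The subtlety you flag at the end is a legitimate one, and it is worth being clear about its status. You are correct that in the immersion subcase the isometry assertion
\begin{eqnarray*}
(d_{\vec G})^k\bigl((u_1,\ldots,u_k),(v_1,\ldots,v_k)\bigr)= (d_{\vec G'})^k\bigl((\varphi(u_1),\ldots,\varphi(u_k)),(\varphi(v_1),\ldots,\varphi(v_k))\bigr)
\end{eqnarray*}
reduces to $d_{\vec G}(u_j,v_j)=d_{\vec G'}(\varphi(u_j),\varphi(v_j))$ for each $j$, and that $d_{\vec G}(u_j,v_j)$ is a cross-tuple distance that is not constrained by the filtration to lie in $(n,m]\subseteq(0,m_0]$; the filtration only bounds distances between adjacent coordinates within a single tuple. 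Your proposed fix, restricting to the filtration range where Definition~\ref{def-2}(1) applies, therefore does not actually close this gap, and Lemma~\ref{le-2.91} supplies only the inequality $d_{\vec G}(u_j,v_j)\geq d_{\vec G'}(\varphi(u_j),\varphi(v_j))$ outside the radius $m_0/2$. However, this same unjustified step appears verbatim in the paper's own proof of Proposition~\ref{le-9.a1q}, which states the isometry of $\varphi_k$ in the immersion case without argument. You have thus faithfully reproduced the paper's reasoning, including this rough edge; only the embedding subcase, and the well-definedness assertion in either subcase, are cleanly justified by the hypotheses as written.
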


\begin{proof}
The  proofs  of  (1)  and  (2) are   analogous  with  Proposition~\ref{le-9.a1q}
and  Proposition~\ref{le-9.au-1q}  respectively.  
\end{proof}

\begin{corollary}\label{co-path-conf-1}
   \begin{enumerate}[(1)]
    \item
     Let  $\varphi:  \vec  G\longrightarrow  \vec  G'$  be  a  strong   totally geodesic  immersion  
   of  digraphs  with radius  $m_0/2$   (resp.  a  strong   totally geodesic  embedding   
   of   digraphs).    
      Then   $\varphi$  induces  a       double-persistent  embedding  
   \begin{eqnarray*}
   \Phi(-,-):    \overrightarrow {\rm  Ind}({\vec  G}, -,-)  \longrightarrow  
    \overrightarrow  {\rm   Ind} ({\vec  G'}, -,-)  
   \end{eqnarray*}
   for  $1\leq  n<m\leq  m_0$     (resp.   for  $1\leq  n<m \leq  \infty$)  
     where  $n/2$    and  $m/2$  are the     parameters  
   in  the  double-persistence;
   \item
    Let  $\varphi:    G\longrightarrow   G'$  be  a  strong   totally geodesic  immersion  
   of   graphs  with radius  $m_0/2$   (resp.  a  strong   totally geodesic  embedding   
   of   graphs).    
      Then   $\varphi$  induces  a       double-persistent  embedding  
   \begin{eqnarray*} 
   \Phi(-,-):    \overrightarrow {\rm  Ind}({  G}, -,-)  \longrightarrow  
    \overrightarrow  {\rm   Ind} ({   G'}, -,-)  
   \end{eqnarray*}
   for  $1\leq  n<m\leq  m_0$     (resp.   for  $1\leq  n<m \leq  \infty$)  
     where  $n/2$    and  $m/2$  are the     parameters  
   in  the  double-persistence.  
   \end{enumerate}  
\end{corollary}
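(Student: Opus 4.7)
The plan is to deduce both parts of Corollary~\ref{co-path-conf-1} directly from Proposition~\ref{le-path.a1q} by passing to the union over $k$ in the definition of the path independence complex (Definition~\ref{def-25apr-5} and its graph analog). For part (1), I would begin by invoking Proposition~\ref{le-path.a1q}(1) to obtain, for each $k \geq 1$, the double-persistent $\mathbb{Z}_2$-equivariant isometric embedding $\Phi_k(-,-): \overrightarrow{\rm Conf}_k(V_{\vec G}, -,-) \longrightarrow \overrightarrow{\rm Conf}_k(V_{\vec G'}, -,-)$, valid for the parameter range $1 \leq n < m \leq m_0$ in the immersion case (resp. $1 \leq n < m$ in the embedding case). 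The desired map $\Phi(-,-)$ is then defined tuple-by-tuple: an independent elementary $(k-1)$-path $v_0v_1\ldots v_{k-1}$ in $\overrightarrow{\rm Ind}(\vec G, n/2, m/2)$ is sent to $\varphi(v_0)\varphi(v_1)\ldots\varphi(v_{k-1})$ in $\overrightarrow{\rm Ind}(\vec G', n/2, m/2)$.

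Next I would verify the three properties that make $\Phi(-,-)$ a double-persistent embedding of path independence complexes. The fact that $\Phi$ is injective on each $\overrightarrow{\rm Conf}_k$ follows from Proposition~\ref{le-path.a1q}(1), and since the $\overrightarrow{\rm Conf}_k$ for different $k$ are disjoint as subsets of the path independence complex (they consist of tuples of different lengths), the union map is injective on $\overrightarrow{\rm Ind}(\vec G, -,-) = \bigcup_{k \geq 1} \overrightarrow{\rm Conf}_k(V_{\vec G}, -,-)$. Compatibility with the double-persistence structure from Corollary~\ref{co-25apr-2} follows because the inclusions $\overrightarrow{\rm Conf}_k(V_{\vec G}, n_1/2, m/2) \supseteq \overrightarrow{\rm Conf}_k(V_{\vec G}, n_2/2, m/2)$ and the analogous ones in the second parameter already satisfy commuting squares with $\Phi_k$ by Proposition~\ref{le-path.a1q}(1), and these assemble into the required commuting squares on the union.

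For part (2), I would run the identical argument, replacing Proposition~\ref{le-path.a1q}(1) with Proposition~\ref{le-path.a1q}(2) and using the graph version of Definition~\ref{def-25apr-5}, so the proof is a verbatim analog.

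The main (and only) potential obstacle is the fact noted in the Remark that $\overrightarrow{\rm Ind}(\vec G, n/2, m/2)$ is in general not a simplicial complex, so the word ``embedding'' must be interpreted as an injective map of the underlying sets of ordered tuples respecting the filtration structure, rather than a simplicial embedding. Once this interpretation is adopted — exactly as in the definition of the target and source — no further work is needed beyond assembling the $\Phi_k$ into a single map, and the corollary follows immediately.
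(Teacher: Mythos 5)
Your proof is correct and takes essentially the same route as the paper: the paper's proof of Corollary~\ref{co-path-conf-1} consists of the single observation that parts (1) and (2) follow from Proposition~\ref{le-path.a1q}~(1) and (2) respectively, by taking the union over $k$ in Definition~\ref{def-25apr-5}, which is exactly the argument you spell out. The extra care you take in noting that $\overrightarrow{\rm Ind}$ need not be a simplicial complex (so ``embedding'' means an injective filtration-respecting map of sets) and in checking disjointness of the $\overrightarrow{\rm Conf}_k$ for different $k$ is sound and fills in detail the paper leaves implicit.
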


\begin{proof}
The  corollary (1)  and  (2)  follow   from  Proposition~\ref{le-path.a1q}  (1)  and  (2) 
respectively.  
\end{proof}

   \subsection{The  infimum   and   the   supremum  chain  complexes}\label{ss2.1}

Let  $C=(C_q,  \partial_q)_{q\in \mathbb{Z}}$  be  a  chain   complex  where 
$C_q$  are   abelian  groups  and 
$\partial_q:  C_q\longrightarrow  C_{q-1}$   are  homomorphisms
 such  that  $\partial_{q-1}\partial_q=0$ 
 for  any  $q\in \mathbb{Z}$.  
 Let  $D=(D_q)_{q\in \mathbb{Z}}$  be  a  graded   subgroup  of  $C$.  
 The  infimum  chain  complex  $ {\rm  Inf} (D,C)$  and  the  supremum  chain  complex 
 $  {\rm  Sup} (D,C)$  are 
 sub-chain  complexes  of  $C$   given by
 (cf.   \cite[Sec.  2]{h1})
 \begin{eqnarray*}
 {\rm  Inf}_q(D,C)&=&  D_q\cap \partial_q^{-1}  D_{q-1}, \\
  {\rm  Sup}_q(D,C)&=&  D_q+ \partial_{q+1} D_{q+1}   
 \end{eqnarray*}
 for  any  $q\in \mathbb{Z}$.    
Note  that  both   $ {\rm  Inf} (D,C)$  and  
 $  {\rm  Sup} (D,C)$   do  not   depend  on  the  choice  of   the  ambient  chain  complex  $C$,  
 i.e.  if  there  is  another  chain  complex  $C'=(C'_q,  \partial'_q)_{q\in \mathbb{Z}}$
  such  that  $D$  is  also   a  graded  subgroup  of  $C'$  
  and  $\partial'\mid_D =\partial\mid_D$,  
  then  
  \begin{eqnarray*}
  {\rm  Inf} (D,C')&=&{\rm  Inf} (D,C), \\   {\rm  Sup} (D,C') &=&  {\rm  Sup} (D,C). 
  \end{eqnarray*}
 Thus we  can  simply  write  ${\rm  Inf} (D,C)$  as  ${\rm  Inf} (D)$  and  
 write  $ {\rm  Sup} (D,C)$  as  $ {\rm  Sup} (D)$.  
 It  is  proved  in  \cite[Sec. 2]{h1}
    that  the  canonical  inclusion  $\iota:  {\rm  Inf} (D)\longrightarrow  {\rm  Sup} (D)$  
is  a  quasi-isomorphism  of  chain  complexes.  
 Denote  $H_q(D)$  for  
 $H_q({\rm  Inf} (D))\cong   H_q({\rm  Sup} (D))$  for  any  $q\in \mathbb{Z}$.

Let  $C=(C_q,  \partial_q)_{q\in \mathbb{Z}}$   and 
$C'=(C'_q,  \partial'_q)_{q\in \mathbb{Z}}$ be    chain   complexes.     
Let  $D=(D_q)_{q\in \mathbb{Z}}$  and  $D'=(D'_q)_{q\in \mathbb{Z}}$  be 
   graded   subgroups  of  $C$   and  $C'$  respectively.  
Let $\varphi:  C\longrightarrow  C'$  be  a  chain  map such  that  $\varphi(D)\subseteq  D'$.  
Then  $\varphi$  induces  chain  maps 
\begin{eqnarray*}
{\rm  Inf}(\varphi):  && {\rm  Inf} (D)\longrightarrow   {\rm  Inf} (D'),\\
{\rm  Sup}(\varphi):  && {\rm  Sup} (D)\longrightarrow   {\rm  Sup} (D')
\end{eqnarray*}
such  that  the  diagram  commutes 
\begin{eqnarray*}
\xymatrix{
{\rm  Inf} (D)\ar[rr]^-{{\rm  Inf}(\varphi)}  \ar[d]_-{\iota}
&&   {\rm  Inf} (D')\ar[d]^-{\iota'}\\
{\rm  Sup} (D)\ar[rr]^-{{\rm  Sup}(\varphi)}  
&&   {\rm  Sup} (D')   
}
\end{eqnarray*}
where  $\iota$  and  $\iota'$  are  the canonical  injective  quasi-isomorphisms.  
Thus   ${\rm  Inf}(\varphi)$  and  ${\rm  Sup}(\varphi)$  induce  the  same  homomorphism 
in  homology
\begin{eqnarray*}
\varphi_*= {\rm  Inf}(\varphi)_*={\rm  Sup}(\varphi)_*:  
H_q(D)\longrightarrow  H_q(D')
\end{eqnarray*} 
for  any  $q\in  \mathbb{Z}$.

Let  $G$  be  a  group.  Suppose  $G$  act  on  $C$  and  $C'$   such  that      
each  $g\in  G$  induces  self-chain  maps  on  $C$  and  $C'$  respectively.  
Suppose  $D$  and  $D'$  are  $G$-invariant  subgroups.  
Let   $\varphi:  C\longrightarrow  C'$  be  a  $G$-equivariant  chain  map,  
i.e.  a  chain  map  such  that  for any  $g\in  G$,  the  diagram   commutes 
\begin{eqnarray}\label{eq-diag-prelim-1}
\xymatrix{
C\ar[rr]^-{\varphi}\ar[d]_-{g}  && C'\ar[d]^-{g}\\
C\ar[rr]^-{\varphi}  && C'.  
}
\end{eqnarray}

\begin{lemma}\label{le-pre-1}
\begin{enumerate}[(1)]
\item
The  chain  complexes  $ {\rm  Inf} (D)$  and   
 $  {\rm  Sup} (D)$  are  $G$-invariant;  
 \item
 The  quasi-isomorphism  $\iota$  is  $G$-equivariant;  
\item
The  chain  maps  ${\rm  Inf}(\varphi)$  and  ${\rm  Sup}(\varphi)$  are  $G$-equivariant. 
\end{enumerate}
\end{lemma}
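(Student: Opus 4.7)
The plan is to verify each of the three assertions directly from the definitions of ${\rm Inf}(D)$, ${\rm Sup}(D)$, ${\rm Inf}(\varphi)$, ${\rm Sup}(\varphi)$, using two standing facts: (i) each $g\in G$ acts on $C$ and on $C'$ as a chain map, hence commutes with the boundary operators; and (ii) the subgroups $D$ and $D'$ are $G$-invariant by hypothesis. All three assertions reduce to a short unwrapping of the definitions, and there is no genuine obstacle; the only care needed is to distinguish between the inductive definition of ${\rm Inf}$ (an intersection of preimages) and of ${\rm Sup}$ (a sum of boundary images).

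For (1), let $g\in G$ and let $x\in {\rm Inf}_q(D)=D_q\cap \partial_q^{-1} D_{q-1}$. Then $x\in D_q$ gives $g(x)\in D_q$ by $G$-invariance of $D$, and $\partial_q(g(x))=g(\partial_q x)\in D_{q-1}$ since $g$ is a chain map and $\partial_q x\in D_{q-1}$. Hence $g(x)\in {\rm Inf}_q(D)$. Similarly, for $x=d+\partial_{q+1} d'\in {\rm Sup}_q(D)=D_q+\partial_{q+1}D_{q+1}$, one has $g(x)=g(d)+\partial_{q+1}(g(d'))$ with $g(d)\in D_q$ and $g(d')\in D_{q+1}$, so $g(x)\in {\rm Sup}_q(D)$.

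For (2), the map $\iota:{\rm Inf}(D)\longrightarrow {\rm Sup}(D)$ is the canonical inclusion, i.e.\ the identity on the underlying set; since both source and target are $G$-invariant by (1), the relation $\iota(g\cdot x)=g\cdot \iota(x)$ is immediate.

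For (3), the maps ${\rm Inf}(\varphi)$ and ${\rm Sup}(\varphi)$ are defined as the restriction/induced morphisms of $\varphi$ to the infimum and supremum chain complexes (which are well-defined thanks to $\varphi(D)\subseteq D'$). Since the diagram (\ref{eq-diag-prelim-1}) commutes on all of $C$, it commutes in particular when restricted to ${\rm Inf}(D)$ or ${\rm Sup}(D)$; combining this with the $G$-invariance of ${\rm Inf}(D')$ and ${\rm Sup}(D')$ established in (1) applied to $C'$, we obtain the desired $G$-equivariance of ${\rm Inf}(\varphi)$ and ${\rm Sup}(\varphi)$. The whole lemma is therefore a bookkeeping exercise, and I would present the three parts as three short paragraphs in the written proof.
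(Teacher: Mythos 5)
Your proof is correct and follows essentially the same route as the paper: part (1) by chasing $G$-invariance of $D$ together with the fact that each $g$ commutes with $\partial$, part (2) by observing $\iota$ is inclusion, and part (3) by restricting the commutative square (\ref{eq-diag-prelim-1}). One small point in your favor: the paper's proof of (1) cites the $G$-equivariance of $\varphi$, which plays no role there; what is actually needed (and what you correctly invoke) is only that $G$ acts by chain maps on $C$ so that $g$ commutes with $\partial_q$, together with $G$-invariance of $D$. So your version of (1) is the cleaner statement of the same argument.
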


\begin{proof}
(1)  It  follows  from  the  $G$-equivariance  of  $\varphi$  and  the  $G$-invariance  of  $D$  
that  
  $\partial_q^{-1}  D_{q-1}$  as  well  as   $\partial_{q+1} D_{q+1}$  is  $G$-invariant 
   for  each  $q\in \mathbb{Z}$.  
Thus  $ {\rm  Inf} (D)$  and   
 $  {\rm  Sup} (D)$  are  $G$-invariant.

(2) 
 We  have  a  $G$-action on  $C$.   Restricted  to     $  {\rm  Sup} (D)$,  
this induces  a  $G$-action  on   $  {\rm  Sup} (D)$;   
 and  restricted  to     $  {\rm  Inf} (D)$,  
 this  induces  a  $G$-action  on   $  {\rm  Inf} (D)$. 
The  diagram  commutes 
\begin{eqnarray*}
\xymatrix{
 {\rm  Inf} (D)\ar[d]_-{\iota} \ar[r]^-{g} &  {\rm  Inf} (D)\ar[d]^-{\iota}\\
  {\rm  Sup} (D) \ar[r]^-{g} &  {\rm  Sup} (D)
}
\end{eqnarray*}
for  any  $g\in  G$.  
Therefore,  the  canonical  inclusion  $\iota$  is  $G$-equivariant.

(3)  
For  any  $g\in  G$,  the  diagram  (\ref{eq-diag-prelim-1})  induces     commutative  diagrams 
\begin{eqnarray*} 
\xymatrix{
  {\rm  Inf} (D)\ar[rr]^-{{\rm  Inf}(\varphi)}\ar[d]_-{g}  &&   {\rm  Inf} (D')\ar[d]^-{g}\\
  {\rm  Inf} (D)\ar[rr]^-{{\rm  Inf}(\varphi)}  && {\rm  Inf} (D'),  
}
~~~~~~
\xymatrix{
  {\rm  Sup} (D)\ar[rr]^-{{\rm  Sup}(\varphi)}\ar[d]_-{g}  &&   {\rm  Sup} (D')\ar[d]^-{g}\\
  {\rm  Sup} (D)\ar[rr]^-{{\rm  Sup}(\varphi)}  && {\rm  Sup} (D').  
}
\end{eqnarray*}
Therefore,   ${\rm  Inf}(\varphi)$  and  ${\rm  Sup}(\varphi)$  are  $G$-equivariant. 
\end{proof}

\begin{corollary}\label{co-pre.a1}
The  induced  homomorphism  $\varphi_*:  H_q(D)\longrightarrow  H_q(D')$  
is  $G$-equivariant.  
\end{corollary}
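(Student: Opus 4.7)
The plan is to deduce the corollary directly from Lemma~\ref{le-pre-1} by applying the homology functor, which carries $G$-equivariant chain maps between $G$-invariant chain complexes to $G$-equivariant homomorphisms between $G$-modules. Since the same induced homomorphism $\varphi_*$ is represented both by $\mathrm{Inf}(\varphi)_*$ and by $\mathrm{Sup}(\varphi)_*$, it suffices to establish $G$-equivariance for either one.

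First, I would record the $G$-module structures on $H_q(D)$ and $H_q(D')$. By Lemma~\ref{le-pre-1}(1), the subcomplexes $\mathrm{Inf}(D)$ and $\mathrm{Sup}(D)$ (resp.\ $\mathrm{Inf}(D')$ and $\mathrm{Sup}(D')$) are $G$-invariant, so each $g \in G$ acts as a self-chain map on them and thereby induces a homomorphism $g_*$ on homology. By Lemma~\ref{le-pre-1}(2), the quasi-isomorphism $\iota$ is $G$-equivariant, so the canonical identification $H_q(\mathrm{Inf}(D)) \cong H_q(\mathrm{Sup}(D)) =: H_q(D)$ is an isomorphism of $G$-modules, and the $G$-action on $H_q(D)$ is unambiguous; similarly for $H_q(D')$.

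Next, by Lemma~\ref{le-pre-1}(3), the chain maps $\mathrm{Inf}(\varphi) : \mathrm{Inf}(D) \to \mathrm{Inf}(D')$ and $\mathrm{Sup}(\varphi) : \mathrm{Sup}(D) \to \mathrm{Sup}(D')$ are $G$-equivariant. Applying the homology functor $H_q$ to the commutative square
\begin{eqnarray*}
\xymatrix{
\mathrm{Inf}(D) \ar[rr]^-{\mathrm{Inf}(\varphi)} \ar[d]_-{g} && \mathrm{Inf}(D') \ar[d]^-{g} \\
\mathrm{Inf}(D) \ar[rr]^-{\mathrm{Inf}(\varphi)} && \mathrm{Inf}(D')
}
\end{eqnarray*}
and using the functoriality of $H_q$ yields the commutative square
\begin{eqnarray*}
\xymatrix{
H_q(D) \ar[rr]^-{\varphi_*} \ar[d]_-{g_*} && H_q(D') \ar[d]^-{g_*} \\
H_q(D) \ar[rr]^-{\varphi_*} && H_q(D'),
}
\end{eqnarray*}
which is precisely the $G$-equivariance of $\varphi_*$. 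There is no real obstacle here: once Lemma~\ref{le-pre-1} is in hand the corollary is a one-line consequence of the functoriality of homology, so the only care needed is to note that the two possible $G$-module structures on $H_q(D)$ coming from $\mathrm{Inf}(D)$ and $\mathrm{Sup}(D)$ agree, which is guaranteed by part~(2) of the lemma.
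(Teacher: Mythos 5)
Your proof is correct and follows the same route as the paper: apply the homology functor to the $G$-equivariant chain maps $\mathrm{Inf}(\varphi)$ and $\mathrm{Sup}(\varphi)$ from Lemma~\ref{le-pre-1}(3) and invoke functoriality. You add the useful (if brief in the paper) remark that Lemma~\ref{le-pre-1}(2) ensures the $G$-module structure on $H_q(D)$ is unambiguous, which the paper leaves implicit.
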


\begin{proof}
The $G$-equivariant  chain  maps  ${\rm  Inf}(\varphi)$  and  ${\rm  Sup}(\varphi)$
in     Lemma~\ref{le-pre-1}~(3)  induce   a   $G$-equivariant   homomorphism  $\varphi_*:  H_q(D)\longrightarrow  H_q(D')$  for  any  $q\in \mathbb{Z}$.    
\end{proof}

 \subsection{Chain  complexes  associated  with  the  path  independence  complexes} 
 \label{ssa-4.3}

     Let  $R$  be  a  commutative  ring with  unit.  
 Let  $\Lambda_{k}(V)$  be  the  free  $R$-module 
   spanned  by  all  the  elementary  $k$-paths    (cf.  Definition~\ref{def--2})  on  $V$. 
   Define  the  {\it   boundary  map}  
   $
   \partial_k:  \Lambda_{k}(V)\longrightarrow \Lambda_{k-1}(V)
   $ 
   by  
   \begin{eqnarray*}
   \partial_k(v_0v_1\ldots  v_k) =\sum_{i=0}^k (-1)^i  v_0\ldots\widehat{v_i} \ldots   v_k. 
   \end{eqnarray*}
   Then  $\partial_{k-1}\partial _k=0$  hence  $(\Lambda_{*}(V), \partial_*)$  is  a  chain  complex
    (cf.  \cite{lin2,lin3,lin6}). 
    Let   $\mathcal{R}_{k}(V)$  be  the  free  $R$-module 
   spanned  by  all  the   regular elementary  $k$-paths   on  $V$ and  
   let  $I_{k}(V)$  be  the  free  $R$-module 
   spanned  by  all  the   non-regular  elementary  $k$-paths   on  $V$.
    Then  $\Lambda_{k}(V)= \mathcal{R}_{k}(V)\oplus  I_{k}(V)$.  
    By   \cite[Lemma~2.9~(a)]{lin2},   
    $( I_*(V), \partial_*)$
    is  a   sub-chain  complex  of  $(\Lambda_{*}(V), \partial_*)$.  
    By   \cite[Definition~2.10]{lin2},  
    $(\mathcal{R}_{*}(V), \tilde\partial_*)$  is  a     chain  complex  
      with  the  {\it  regular  boundary  operator}  $\tilde \partial_*$, 
     i.e.   the  induced  boundary  operator  of  the  quotient  chain  complex  
    $\Lambda_{*}(V)/  I_*(V)$.

    We  have  a  canonical   $\mathbb{Z}_2$-action  on  $(\Lambda_{*}(V), \partial_*)$ 
    such  that    
    the  nontrivial  element  of   $\mathbb{Z}_2$  is  a  chain  map   with  respect  to  $ \partial_*$  
    sending    every  path to  its  inverse.   
    The  sub-chain  complex   $( I_*(V), \partial_*)$  of  $(\Lambda_{*}(V), \partial_*)$ 
     is       $\mathbb{Z}_2$-invariant.   
    This  induces  a    $\mathbb{Z}_2$-action  on  $(\mathcal{R}_{*}(V), \tilde\partial_*)$ 
     such  that    
    the  nontrivial  element  of   $\mathbb{Z}_2$  is  a  chain  map  with  respect  to  $\tilde\partial_*$  
    sending    every  regular  path to  its  inverse.   
    The  graded  sub-$R$-module   $\mathcal{A}_*(\vec  G)$  of  $  \mathcal{R}_{*}(V) $
       is       $\mathbb{Z}_2$-invariant.  
    Thus  the   sub-chain  complex  $(\Omega_*(\vec  G), \tilde\partial_*)$  of  $(\mathcal{R}_{*}(V), \tilde\partial_*)$ 
     is       $\mathbb{Z}_2$-invariant.  
                  Let  
          \begin{eqnarray*}
            \mathcal{D}_{k}( \vec G,  \frac{n}{2},  \frac{m}{2}  )   =    R\Big(\overrightarrow{ {\rm    Conf}}_k(V_{\vec  G}, \frac{n}{2},  \frac{m}{2})\Big) 
        ~~~~~~  \Big(  {\rm  resp.~}        \mathcal{D}_{k}(  G,  \frac{n}{2},  \frac{m}{2}  )   =    R\Big(\overrightarrow{ {\rm    Conf}}_k(V_{  G}, \frac{n}{2},  \frac{m}{2})\Big)\Big)
          \end{eqnarray*}
   be  the  free  $R$-module 
   spanned  by  all  the    independent   elementary  $k$-paths  on  $\vec  G$ 
   (resp.  $G$) with  constraint   $(n/2,  m/2]$.  
   Then  $\mathcal{D}_{k}( \vec G,  n/2,  m/2  )$  
   (resp.  $\mathcal{D}_{k}( G,  n/2,  m/2  )$)
   is  a 
   $\mathbb{Z}_2$-invariant  
    sub-$R$-module  of    $\mathcal{A}_k(\vec  G)$
    (resp.  $\mathcal{A}_k(   G)$).

   \begin{lemma}\label{le-10.1}
   For  any  graph  $\vec  G$  with  its  underlying  graph  $G$,  
   we  have    $\mathbb{Z}_2$-equivariant  
      persistent  momomorphisms  of  free   persistent  $R$-modules  
   \begin{eqnarray}\label{eq-10.1}
   I_{\vec  G}(-,\infty)_\#:  &&  \mathcal{D}_{k}( G,  -,  \infty )\longrightarrow    \mathcal{D}_{k}(\vec G,  -,  \infty),\\
   J_{\vec  G}(\frac{1}{2},-)_\#:  &&  \mathcal{D}_{k}(\vec  G, \frac{1}{2},-)\longrightarrow    \mathcal{D}_{k}( G, \frac{1}{2},-)
   \label{eq-10.19912}
   \end{eqnarray}
    such  that $ I_{\vec  G}(1/2, \infty)_\#= J_{\vec  G}(1/2, \infty)_\#^{-1}$   is  the  identity  map.   
   \end{lemma}
   
   \begin{proof}
   The  proof  follows  from  Proposition~\ref{le-97.08.b}.    
   \end{proof}

 \begin{lemma}\label{le-12-q}
    \begin{enumerate}[(1)]
    \item
     Let  $\varphi:  \vec  G\longrightarrow  \vec  G'$  be  a  strong   totally geodesic  immersion  
   of  digraphs  with radius  $m_0/2$  (resp.   a  strong   totally geodesic  embedding   
   of   digraphs).    
      Then   $\varphi$  induces  a      double-persistent   $\mathbb{Z}_2$-equivariant   monomorphism 
       of  free  double-persistent  $R$-modules   
   \begin{eqnarray*} 
   \Phi_k(-,-)_\#:    \mathcal{D}_k({\vec  G}, -,-) \longrightarrow   
 \mathcal{D}_k({\vec    G'}, -,-) 
   \end{eqnarray*}
   for  $1\leq  n<m\leq  m_0$    (resp.   for  $1\leq  n<m\leq  \infty $)
    where  $n/2$    and  $m/2$  are the     parameters  
   in  the  double-persistence;
   \item
   Let  $\varphi:   G\longrightarrow     G'$  be  a  strong   totally geodesic  immersion  
   of   graphs  with radius  $m_0/2$  (resp.   a  strong   totally geodesic  embedding   
   of     graphs).    
      Then   $\varphi$  induces  a     double-persistent   $\mathbb{Z}_2$-equivariant  
      monomorphism 
       of  free  double-persistent  $R$-modules  
   \begin{eqnarray*} 
  \Phi_k(-,-)_\#:    \mathcal{D}_k({   G}, -,-) \longrightarrow   
 \mathcal{D}_k({    G'}, -,-) 
   \end{eqnarray*}
   for  $1\leq  n<m\leq  m_0$    (resp.   for  $1\leq  n<m\leq  \infty $)
   where  $n/2$    and  $m/2$  are the     parameters  
   in  the  double-persistence.  
   \end{enumerate}  
\end{lemma}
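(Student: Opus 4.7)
The plan is to derive this lemma as a formal consequence of Proposition~\ref{le-path.a1q} together with the functoriality of the free $R$-module construction $R(-)$. For part (1), I would first invoke Proposition~\ref{le-path.a1q}(1) to obtain, for every admissible pair $(n/2, m/2)$ with $1\leq n<m\leq m_0$ (resp.\ $1\leq n<m$), the $\mathbb{Z}_2$-equivariant isometric embedding
\[
\Phi_k(\tfrac{n}{2},\tfrac{m}{2}):\overrightarrow{\rm Conf}_k(V_{\vec G},\tfrac{n}{2},\tfrac{m}{2})\hookrightarrow \overrightarrow{\rm Conf}_k(V_{\vec G'},\tfrac{n}{2},\tfrac{m}{2})
\]
sending $(v_1,\ldots,v_k)$ to $(\varphi(v_1),\ldots,\varphi(v_k))$, along with the commutative squares expressing compatibility with the two filtration directions.

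Next, I would apply the free $R$-module functor to each such embedding. Because $\mathcal{D}_k(\vec G,n/2,m/2)$ is by definition the free $R$-module on the basis $\overrightarrow{\rm Conf}_k(V_{\vec G},n/2,m/2)$, the induced $R$-linear map $\Phi_k(n/2,m/2)_\#$ is determined on generators by the set-level embedding $\Phi_k(n/2,m/2)$. Injectivity of this set-level map immediately upgrades to a monomorphism of free $R$-modules, since images of distinct basis elements remain a linearly independent subset of $\mathcal{D}_k(\vec G',n/2,m/2)$. The $\mathbb{Z}_2$-equivariance (with respect to path reversal) transfers verbatim from the $\mathbb{Z}_2$-equivariance of $\Phi_k(n/2,m/2)$, since reversal acts by permuting basis elements.

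For the double-persistence, I would take the commutative squares from Proposition~\ref{le-path.a1q} (analogous to diagrams~(\ref{diag-laz-1}) and~(\ref{diag-laz-2})) and apply $R(-)$ termwise; functoriality converts these to commutative squares of $R$-linear maps, giving the required compatibility with the canonical inclusions $\mathcal{D}_k(\vec G,n_2/2,m/2)\hookrightarrow \mathcal{D}_k(\vec G,n_1/2,m/2)$ for $n_1\leq n_2$ and $\mathcal{D}_k(\vec G,n/2,m_1/2)\hookrightarrow \mathcal{D}_k(\vec G,n/2,m_2/2)$ for $m_1\leq m_2$. This yields $\Phi_k(-,-)_\#$ as a morphism of double-persistent $R$-modules.

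Part (2) for graphs is obtained by the same argument applied to Proposition~\ref{le-path.a1q}(2) in place of (1). I do not expect a genuine obstacle: the lemma is a formal translation from the metric/simplicial level to the level of free modules, and the main point to verify is simply that injectivity on bases yields a monomorphism of free modules and that naturality squares survive functorial application of $R(-)$. The radius bound $m_0/2$ transfers unchanged since the construction is entirely parametric in $(n,m)$.
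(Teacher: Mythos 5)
Your proposal is correct and takes the same route as the paper: the paper's proof simply cites Proposition~\ref{le-path.a1q}~(1) and (2), and your argument fills in exactly the intended details, namely applying the free $R$-module functor to the set-level $\mathbb{Z}_2$-equivariant isometric embeddings and observing that injectivity on bases, equivariance, and the naturality squares for double-persistence all transfer formally.
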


\begin{proof}
The  proofs  of  (1)  and  (2)  follow   from  Proposition~\ref{le-path.a1q}~(1)  and  (2)  respectively.  
\end{proof}

\begin{proposition}
\label{pr-chain-1}
 For  any  graph  $\vec  G$  with  its  underlying  graph  $G$, 
 we  have    commutative  diagrams  of  persistent chain  complexes  
 \begin{eqnarray}\label{diag-25-apr-08-3}
&\xymatrix{
{\rm  Inf}_*(\mathcal{D} ( G,  -,  \infty ))\ar[rr]^-{{\rm  Inf}(I_{\vec  G}(-,\infty)_\#)} \ar[d]
&&  {\rm  Inf}_*( \mathcal{D} (\vec G,  -,  \infty )) \ar[d]\\
{\rm  Sup}_*(\mathcal{D} ( G,  -, \infty ))\ar[rr]^-{{\rm  Sup}(I_{\vec  G}(-,\infty)_\#)}  
&&  {\rm  Sup}_*( \mathcal{D} (\vec G,  -,  \infty )),
}\\
\label{diag-25-apr-08-33333}
&\xymatrix{
{\rm  Inf}_*(\mathcal{D} ( \vec  G,  \frac{1}{2}, - ))\ar[rr]^-{{\rm  Inf}(J_{\vec  G}( \frac{1}{2}, -)_\#)} \ar[d]
&&  {\rm  Inf}_*( \mathcal{D} ( G,  \frac{1}{2}, - )) \ar[d]\\
{\rm  Sup}_*(\mathcal{D} ( \vec  G,  \frac{1}{2}, - ))\ar[rr]^-{{\rm  Sup}(J_{\vec  G}( \frac{1}{2}, -)_\#)}  
&&  {\rm  Sup}_*( \mathcal{D} ( G,  \frac{1}{2}, -))
}
\end{eqnarray}
such  that  all  the  maps  are    persistent 
 $\mathbb{Z}_2$-equivariant  monomorphic  chain  maps    
and  all  the  vertical  maps  are     
quasi-isomorphisms.  
\end{proposition}

\begin{proof}
By  Lemma~\ref{le-pre-1}  and     Lemma~\ref{le-10.1},  
we  have  induced  persistent     $\mathbb{Z}_2$-equivariant   monomorphic  chain  maps  
\begin{eqnarray*}
{\rm Inf}(I_{\vec  G}(-,\infty)_\#): &&  {\rm  Inf}_*( \mathcal{D} (  G,  -,  \infty )) 
\longrightarrow  {\rm  Inf}_*( \mathcal{D} (\vec G,  -,  \infty )),  \\
{\rm  Sup}(I_{\vec  G}(-,\infty)_\#): &&  {\rm  Sup}_*( \mathcal{D} (  G,  -,  \infty )) 
\longrightarrow  {\rm  Sup}_*( \mathcal{D} (\vec G,  -,  \infty))   
\end{eqnarray*}
such that  (\ref{diag-25-apr-08-3})   commutes.  
Similarly,  we  have  
 induced  persistent     $\mathbb{Z}_2$-equivariant   monomorphic  chain  maps  
\begin{eqnarray*}
{\rm Inf}(J_{ \vec  G}(\frac{1}{2},-)_\#): &&  {\rm  Inf}_*( \mathcal{D} ( \vec   G,  \frac{1}{2},- )) 
\longrightarrow  {\rm  Inf}_*( \mathcal{D} (  G, \frac{1}{2},- )),  \\
{\rm  Sup}(J_{\vec  G}(\frac{1}{2},-)_\#): &&  {\rm  Sup}_*( \mathcal{D} ( \vec   G,  \frac{1}{2},- )) 
\longrightarrow  {\rm  Sup}_*( \mathcal{D} (  G,  \frac{1}{2},-))   
\end{eqnarray*}
such that  (\ref{diag-25-apr-08-33333})   commutes.  
\end{proof}

\begin{corollary}\label{co-25-apr-b1}
For  any  graph  $\vec  G$  with  its  underlying  graph  $G$, 
 we  have    canonical  
 $\mathbb{Z}_2$-equivariant   persistent  homomorphisms  of   persistent  homology  groups 
 \begin{eqnarray*}
 I_{\vec  G}(-,\infty)_*:  &&   H_q(\mathcal{D} (  G,  -,  \infty ))
 \longrightarrow   H_q(\mathcal{D} (  \vec G,  -,  \infty )),\\
   J_{\vec  G}(\frac{1}{2},-)_*:  &&   H_q(\mathcal{D} ( \vec  G, \frac{1}{2},- ))
 \longrightarrow   H_q(\mathcal{D} (   G, \frac{1}{2},- ))
 \end{eqnarray*}
 such  that  $ I_{\vec  G}(1/2,\infty)_*= J_{\vec  G}(1/2,\infty)_*^{-1}$  is  the  identity.  
\end{corollary}

\begin{proof}
The  proof  follows  from  Corollary~\ref{co-pre.a1},
Lemma~\ref{le-10.1}   and  Proposition~\ref{pr-chain-1}.  
\end{proof}

By  Proposition~\ref{le-97.08.b},  Corollary~\ref{pr-path.v1}
  and  Corollary~\ref{co-25-apr-b1},
  we  obtain  Theorem~\ref{th-main-intro-3}.

\begin{corollary}\label{co-25-05-23-9}
For  any  digraph  $\vec  G$  with   its  underlying graph  $G$,
 the  group    
 ${\rm  Aut}( \overrightarrow{\rm Ind}(\vec  G, {n}/{2}, {m}/{2}))$  acts  on
 $H_*(\mathcal{D} (  \vec G,{n}/{2}, {m}/{2}))$
 and  the  group   ${\rm  Aut}( \overrightarrow{\rm Ind}(  G, {n}/{2}, {m}/{2}))$  acts  on
 $H_*(\mathcal{D} (  G, {n}/{2}, {m}/{2}))$. 
 Consequently,  
  the  group    
 ${\rm  Aut}( \vec  G )$  acts  on
 $H_*(\mathcal{D} (  \vec G,{n}/{2}, {m}/{2}))$
 and  the  group   ${\rm  Aut}(   G)$  acts  on
 $H_*(\mathcal{D} (  G, {n}/{2}, {m}/{2}))$. 
\end{corollary}

\begin{proof}
Let  $\varphi\in   {\rm  Aut}( \overrightarrow{\rm Ind}(\vec  G, {n}/{2}, {m}/{2}))$.  
The  diagram  commutes
\begin{eqnarray*}
\xymatrix{
{\rm  Inf}_*( \mathcal{D} (\vec G,  {n}/{2}, {m}/{2} )) \ar[d]_-{\varphi} \ar[r]^-{} 
 &{\rm  Sup}_*( \mathcal{D} (\vec G,  {n}/{2}, {m}/{2} ))\ar[d]^-{\varphi} \\ 
 {\rm  Inf}_*( \mathcal{D} (\vec G,  {n}/{2}, {m}/{2} ))   \ar[r]^-{} 
 &{\rm  Sup}_*( \mathcal{D} (\vec G,  {n}/{2}, {m}/{2} )) 
}
\end{eqnarray*}
where  the  horizontal   maps are  canonical  inclusions  
and  the  vertical  maps  are  chain  maps  induced by  $\varphi$.  
This  induces  a  homomorphism  
\begin{eqnarray*}
\varphi:  H_*(\mathcal{D} (  \vec G,{n}/{2}, {m}/{2}))\longrightarrow  H_*(\mathcal{D} (  \vec G,{n}/{2}, {m}/{2}))   
\end{eqnarray*}
 and  consequently   ${\rm  Aut}( \overrightarrow{\rm Ind}(\vec  G, {n}/{2}, {m}/{2}))$  acts  on
 $H_*(\mathcal{D} (  \vec G,{n}/{2}, {m}/{2}))$.  
 With  the  help  of  (\ref{eq-250522-b1}),  ${\rm  Aut}( \vec  G )$  acts  on
 $H_*(\mathcal{D} (  \vec G,{n}/{2}, {m}/{2}))$.

 Similarly,  ${\rm  Aut}( \overrightarrow{\rm Ind}(   G, {n}/{2}, {m}/{2}))$  acts  on
 $H_*(\mathcal{D} (   G,{n}/{2}, {m}/{2}))$.  
 With  the  help  of  (\ref{eq-250522-b2}),  ${\rm  Aut}(   G )$  acts  on
 $H_*(\mathcal{D} (    G,{n}/{2}, {m}/{2}))$.  
\end{proof}

 \begin{proposition}\label{pr-chain-2}
    \begin{enumerate}[(1)]
    \item
     Let  $\varphi:  \vec  G\longrightarrow  \vec  G'$  be  a  strong   totally geodesic  immersion  
   of  digraphs  with radius  $m_0/2$  (resp.   a  strong   totally geodesic  embedding   
   of   digraphs).    
      Then   $\varphi$  induces  a  
      commutative  diagram  
      \begin{eqnarray}\label{diag-25-apr-08-1}
      \xymatrix{
      {\rm  Inf}_*( \mathcal{D} (\vec G,  -,  - ))\ar[rr]^-{{\rm Inf}(\Phi_k(-,-)_\#)} \ar[d] 
      &&{\rm  Inf}_*( \mathcal{D} (\vec G',  -,  - ))\ar[d]\\
            {\rm  Sup}_*( \mathcal{D} (\vec G,  -,  - ))\ar[rr] ^-{{\rm  Sup}(\Phi_k(-,-)_\#)}
      &&{\rm  Sup}_*( \mathcal{D} (\vec G',  -,  - ))
      }
      \end{eqnarray}
      such  that   all  the  maps  are  double-persistent   $\mathbb{Z}_2$-equivariant   monomorphic 
            chain  maps    
    for  $1\leq  n<m\leq  m_0$    (resp.   for  $1\leq  n<m\leq  \infty $)
    where  $n/2$    and  $m/2$  are the     parameters  
   in  the  double-persistence;
   \item
   Let  $\varphi:   G\longrightarrow     G'$  be  a  strong   totally geodesic  immersion  
   of   graphs  with radius  $m_0/2$  (resp.   a  strong   totally geodesic  embedding   
   of     graphs).    
      Then   $\varphi$  induces  a  
      commutative  diagram 
        \begin{eqnarray*}
      \xymatrix{
      {\rm  Inf}_*( \mathcal{D} (  G,  -,  - ))\ar[rr]^-{{\rm Inf}(\Phi_k(-,-)_\#)} \ar[d] 
      &&{\rm  Inf}_*( \mathcal{D} ( G',  -,  - ))\ar[d]\\
            {\rm  Sup}_*( \mathcal{D} (  G,  -,  - ))\ar[rr] ^-{{\rm  Sup}(\Phi_k(-,-)_\#)}
      &&{\rm  Sup}_*( \mathcal{D} (  G',  -,  - ))
      }
      \end{eqnarray*}
    such  that   all  the  maps  are  double-persistent 
         $\mathbb{Z}_2$-equivariant    monomorphic    chain  maps    
    for  $1\leq  n<m\leq  m_0$    (resp.   for  $1\leq  n<m\leq  \infty $)
    where  $n/2$    and  $m/2$  are the     parameters  
   in  the  double-persistence.
   \end{enumerate}  
\end{proposition}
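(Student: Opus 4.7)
The plan is to derive Proposition~\ref{pr-chain-2} from Lemma~\ref{le-12-q} in exact analogy with the derivation of Proposition~\ref{pr-chain-1} from Lemma~\ref{le-10.1}. For part~(1), I would begin with the double-persistent $\mathbb{Z}_2$-equivariant monomorphism $\Phi_k(-,-)_\#: \mathcal{D}_k(\vec G,-,-) \to \mathcal{D}_k(\vec G',-,-)$ given by Lemma~\ref{le-12-q}(1), valid for $1 \le n < m \le m_0$ in the immersion case and for all $1 \le n < m$ in the embedding case. Since $\varphi$ is a morphism of digraphs, its vertex map extends coordinatewise to a graded $R$-module map $\Phi_\#$ on the ambient free path module $\Lambda_*(V)$ which commutes with the boundary operator $\partial_*$ on the nose: the boundary merely omits a coordinate, and $\Phi_\#$ is applied coordinatewise. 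Thus $\Phi_\#$ is a chain map on $\Lambda_*$ whose restriction sends $\mathcal{D}(\vec G,-,-)$ into $\mathcal{D}(\vec G',-,-)$.

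Next I would apply the infimum and supremum constructions of Subsection~\ref{ss2.1}. By Lemma~\ref{le-pre-1}, both ${\rm Inf}(\Phi_k(-,-)_\#)$ and ${\rm Sup}(\Phi_k(-,-)_\#)$ are well-defined $\mathbb{Z}_2$-equivariant chain maps, and they fit into the square~(\ref{diag-25-apr-08-1}) whose vertical arrows are the canonical inclusions $\iota$, themselves monomorphic quasi-isomorphisms by the theory recalled in Subsection~\ref{ss2.1}. Monomorphism of the two horizontal arrows follows from the monomorphism of $\Phi_k(-,-)_\#$ provided by Lemma~\ref{le-12-q}(1), together with the chain-map identity $\Phi_\# \partial = \partial \Phi_\#$ noted above, which makes the Inf-image a restriction of an injection and forces injectivity on the Sup-image via $\Phi_\#(x + \partial y) = \Phi_\#(x) + \partial \Phi_\#(y)$. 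Double-persistence in the parameters $(n,m)$ then follows from the functoriality of Inf and Sup with respect to the inclusions between different parameter values, combined with the double-persistence of $\Phi_k(-,-)_\#$ already supplied by Lemma~\ref{le-12-q}(1).

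Part~(2) is proved by the identical argument with Lemma~\ref{le-12-q}(2) in place of Lemma~\ref{le-12-q}(1) and the underlying graphs $G$, $G'$ in place of the digraphs $\vec G$, $\vec G'$. The only conceptually new content relative to Proposition~\ref{pr-chain-1} is the compatibility of $\Phi_\#$ with the boundary, which is immediate because $\varphi$ acts coordinatewise on paths. The main obstacle is therefore not structural but a matter of bookkeeping: one has to confirm that the Inf and Sup constructions, defined pointwise at each pair $(n/2, m/2)$, assemble coherently along both directions of the double-filtration. This is handled by applying Lemma~\ref{le-pre-1} parameter by parameter and verifying commutativity of the induced inclusion squares, which is automatic from the functoriality of Inf and Sup together with the commuting squares already used for $\Phi_k(-,-)_\#$ in the proof of Proposition~\ref{le-path.a1q}.
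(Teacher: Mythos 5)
Your proof follows the same route as the paper's: invoke Lemma~\ref{le-12-q} for the map $\Phi_k(-,-)_\#$ and Lemma~\ref{le-pre-1} to pass to ${\rm Inf}$ and ${\rm Sup}$, checking $\mathbb{Z}_2$-equivariance, double-persistence, and the commuting square in the same way, and your observation that $\Phi_\#$ commutes with $\partial$ on $\Lambda_*$ (both acting coordinatewise) is a welcome spelling-out of a hypothesis of Lemma~\ref{le-pre-1} that the paper leaves unspoken.

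One step in your write-up does not carry the weight you assign it: you assert that the chain-map identity $\Phi_\#(x+\partial y)=\Phi_\#(x)+\partial\Phi_\#(y)$ ``forces injectivity on the Sup-image,'' but that identity is just linearity and does not, by itself, give injectivity. Injectivity of $\Phi_k(-,-)_\#$ on $\mathcal{D}_q$ handles ${\rm Inf}_q$ immediately, since ${\rm Inf}_q(\mathcal{D})\subseteq\mathcal{D}_q$ is a restriction of an injection; but ${\rm Sup}_q(\mathcal{D})=\mathcal{D}_q+\partial\mathcal{D}_{q+1}$ is generally not contained in $\mathcal{D}_q$, and in the immersion case $\varphi$ need not be injective on all of $V_{\vec G}$ (cf.\ Example~\ref{ex-3.5}), so $\Phi_\#$ need not be injective on the ambient $\Lambda_*(V_{\vec G})$ either. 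A complete argument would have to verify injectivity of $\Phi_\#$ directly on ${\rm Sup}_q(\mathcal{D})$. The paper's own one-line proof is equally silent here, so this is not a gap you introduced, but since you attempt to justify it you should be aware that the justification as written is incomplete.
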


\begin{proof}
(1)  
By  Lemma~\ref{le-pre-1}  and  Lemma~\ref{le-12-q}~(1), 
we  have  induced   double-persistent   $\mathbb{Z}_2$-equivariant    monomorphic  chain  maps  
\begin{eqnarray*}
{\rm Inf}(\Phi_k(-,-)_\#): &&  {\rm  Inf}_*( \mathcal{D} (\vec G,  -,  - )) 
\longrightarrow  {\rm  Inf}_*( \mathcal{D} (\vec G',  -,  - )),  \\
{\rm  Sup}(\Phi_k(-,-)_\#): &&  {\rm  Sup}_*( \mathcal{D} (\vec G,  -,  - )) 
\longrightarrow  {\rm  Sup}_*( \mathcal{D} (\vec G',  -,  - ))   
\end{eqnarray*}
such  that  (\ref{diag-25-apr-08-1})  commutes.

(2)  The  proof   is   analogous with   (1). 
It  follows from  Lemma~\ref{le-pre-1}  and  Lemma~\ref{le-12-q}~(2).  
\end{proof}

 \begin{corollary}\label{co-apr-chain-2}
    \begin{enumerate}[(1)]
    \item
     Let  $\varphi:  \vec  G\longrightarrow  \vec  G'$  be  a  strong   totally geodesic  immersion  
   of  digraphs  with radius  $m_0/2$  (resp.   a  strong   totally geodesic  embedding   
   of   digraphs).    
      Then   $\varphi$  induces  a  
       $\mathbb{Z}_2$-equivariant   double-persistent  homomorphism    
    for  $1\leq  n<m\leq  m_0$    (resp.   for  $1\leq  n<m \leq  \infty$)
    \begin{eqnarray*}
    \Phi_k(-,-)_*:  H_q(\mathcal{D} (\vec G,  -,  - ))\longrightarrow  H_q(\mathcal{D} (\vec G',  -,  - ))
    \end{eqnarray*}
    where  $n/2$    and  $m/2$  are the     parameters  
   in  the  double-persistence;
   \item
   Let  $\varphi:   G\longrightarrow     G'$  be  a  strong   totally geodesic  immersion  
   of   graphs  with radius  $m_0/2$  (resp.   a  strong   totally geodesic  embedding   
   of     graphs).    
      Then   $\varphi$  induces   a  
       $\mathbb{Z}_2$-equivariant   double-persistent  homomorphism    
    for  $1\leq  n<m\leq  m_0$    (resp.   for  $1\leq  n<m \leq  \infty$)
    \begin{eqnarray*}
    \Phi_k(-,-)_*:  H_q(\mathcal{D} (  G,  -,  - ))\longrightarrow  H_q(\mathcal{D} (  G',  -,  - ))
    \end{eqnarray*}
    where  $n/2$    and  $m/2$  are the     parameters  
   in  the  double-persistence.
   \end{enumerate}  
\end{corollary}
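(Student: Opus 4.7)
The plan is to deduce Corollary~\ref{co-apr-chain-2} directly from Proposition~\ref{pr-chain-2} by applying the homology functor to the commutative square of $\mathbb{Z}_2$-equivariant monomorphic chain maps, and then invoking the general machinery of Subsection~\ref{ss2.1} to identify $H_q({\rm Inf})$ with $H_q({\rm Sup})$.

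First, for part (1), I would start with the commutative diagram (\ref{diag-25-apr-08-1}) provided by Proposition~\ref{pr-chain-2}~(1). By definition,
\begin{eqnarray*}
H_q(\mathcal{D}(\vec G,-,-)) := H_q({\rm Inf}_*(\mathcal{D}(\vec G,-,-))) \cong H_q({\rm Sup}_*(\mathcal{D}(\vec G,-,-))),
\end{eqnarray*}
where the isomorphism is induced by the canonical inclusion $\iota$, which is a quasi-isomorphism by \cite[Sec.~2]{h1} (reviewed in Subsection~\ref{ss2.1}). Applying the homology functor $H_q$ to the chain maps ${\rm Inf}(\Phi_k(-,-)_\#)$ and ${\rm Sup}(\Phi_k(-,-)_\#)$, I obtain two homomorphisms between the respective homology groups; the commutativity of (\ref{diag-25-apr-08-1}) together with the fact that the vertical quasi-isomorphisms $\iota$, $\iota'$ are the identifications used to define $H_q(\mathcal{D})$ implies that these two induced homomorphisms coincide. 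Hence there is a well-defined map
\begin{eqnarray*}
\Phi_k(-,-)_* : H_q(\mathcal{D}(\vec G,-,-)) \longrightarrow H_q(\mathcal{D}(\vec G',-,-)).
\end{eqnarray*}

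Next, I would verify the two remaining structural properties. For the $\mathbb{Z}_2$-equivariance, I apply Corollary~\ref{co-pre.a1} with $G=\mathbb{Z}_2$, using that $\Phi_k(-,-)_\#$ is $\mathbb{Z}_2$-equivariant by Proposition~\ref{pr-chain-2}~(1); this immediately gives that $\Phi_k(-,-)_*$ is $\mathbb{Z}_2$-equivariant. For the double-persistence, I note that the chain maps in Proposition~\ref{pr-chain-2} already form double-persistent morphisms (compatible with the inclusions coming from varying $n$ and $m$ in the allowed range $1\leq n<m\leq m_0$ for the immersion case, and $1\leq n<m$ for the embedding case), and the homology functor preserves such commutative squares, so the induced $\Phi_k(-,-)_*$ inherits the double-persistent structure on the declared parameter range.

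Finally, for part (2), the proof is entirely analogous: I apply Proposition~\ref{pr-chain-2}~(2) to a strong totally geodesic immersion (resp.\ embedding) of graphs to obtain the corresponding commutative square of $\mathbb{Z}_2$-equivariant monomorphic chain maps between ${\rm Inf}$ and ${\rm Sup}$ of $\mathcal{D}(G,-,-)$ and $\mathcal{D}(G',-,-)$, then pass to homology exactly as above, invoking Corollary~\ref{co-pre.a1} for the $\mathbb{Z}_2$-equivariance and the functoriality of $H_q$ for the double-persistence.

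I do not anticipate any real obstacle: the corollary is a pure consequence of (i) functoriality of homology, (ii) the quasi-isomorphism $\iota$ identifying $H_q({\rm Inf})$ with $H_q({\rm Sup})$, and (iii) the $\mathbb{Z}_2$-equivariance recorded in Lemma~\ref{le-pre-1} and Corollary~\ref{co-pre.a1}. The only bookkeeping point requiring care is checking that the induced maps on $H_q({\rm Inf})$ and on $H_q({\rm Sup})$ agree under the canonical identifications, but this is precisely what the commutativity of (\ref{diag-25-apr-08-1}) ensures.
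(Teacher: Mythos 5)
Your proof takes essentially the same approach as the paper: both deduce the corollary from Proposition~\ref{pr-chain-2} together with Corollary~\ref{co-pre.a1}, the latter providing the $\mathbb{Z}_2$-equivariant passage to homology via the ${\rm Inf}$/${\rm Sup}$ quasi-isomorphism. Your version simply spells out in more detail the bookkeeping that the paper leaves implicit (the agreement of the maps induced on $H_q({\rm Inf})$ and $H_q({\rm Sup})$ under $\iota$, and the preservation of double-persistence under the homology functor).
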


   \begin{proof}
   The  proof   of  (1)    follows   from  Corollary~\ref{co-pre.a1}  and  Proposition~\ref{pr-chain-2}~(1).  
   The  proof  of  (2)    follows   from  Corollary~\ref{co-pre.a1}  and  Proposition~\ref{pr-chain-2}~(2).   
   \end{proof}
   
  By  Proposition~\ref{le-path.a1q},  Corollary~\ref{co-path-conf-1} 
    and  Corollary~\ref{co-apr-chain-2},  
  we  obtain  Theorem~\ref{th-main-intro-4}.



 \section{The    Shannon  capacities  of  digraphs}\label{s-5}

 In this  section,  we  apply  the  independence  complexes of  (di)graphs in  Section~\ref{ssa-4}  
  to  study  the  Shannon capacities.  
  In  Subsection~\ref{ssa-5.1},  
  we  prove  some  lemmas  on the strong products  of  (di)graphs.  
  In  Subsection~\ref{ssa-5.2},  
  we  prove that the  Shannon  capacity  of the underlying graph  is  smaller than or  equal
   to the  Shannon  capacity  of  a  digraph.  We  prove  that  given  a 
   strong  totally  geodesic  embedding   of  (di)graphs,  
  the  Shannon  capacity  of  the  ambient  (di)graph   is  larger.  
 
 \subsection{Strong  products  of  (di)graphs}
 \label{ssa-5.1}
 
Given  two  graphs  $G_1=(V_1,E_1)$  and  $G_2=(V_2,E_2)$,  
recall  that  
their  {\it strong product}  $G_1\boxtimes  G_2$      
 is the graph with vertex set  $V_1\times V_2$  and with edge set specified by putting
  $(u,v)$  adjacent to $(u',v')$  iff    one  of  the followings  is   satisfied:  
   (1)  $u=u'$  and  $\{v,v'\}\in E_{2}$,     (2)  $v=v'$  and  $\{u,u'\}\in E_{1}$,  or  
   (3) $\{u,u'\}\in E_{1}$  and  $\{v,v'\}\in E_{2}$
 (cf. \cite{2,1,4}).

 Similarly,   
given two digraphs $\vec  G_1=(V_1,E_1)$ and $\vec  G_2=(V_2,E_2)$,  
we  define  their  {\it strong product} 
$\vec  G_1\boxtimes \vec  G_2$    as   the   digraph  whose  vertex set  is  $V_1\times V_2$   and  whose  arc  set  is  specified  by the following  rule:   
 for  any  distinct two  vertices  $(u,v )$  and  $ (u',v')$  in $V_1\times V_2$,    
 there  is  an  arc  
  $(u,v )\to  (u',v')$
  iff       one  of  the followings  is   satisfied:   
  (1)  $u=u'$  and  $v\to  v'$  is  an  arc  of  $ \vec  G_2$,   
    (2)  $v=v'$  and  $ u\to  u'$   is  an  arc  of  $ \vec  G_1$,  or
      (3) $ u\to  u'$   is  an  arc  of  $ \vec  G_1$    and   $v\to  v'$  is  an  arc  of  $ \vec  G_2$.

\begin{lemma}\label{le-apr-12-1}
For any  digraphs   $\vec   G_1$  and  $\vec  G_2$,  
we  have  
\begin{eqnarray}\label{eq-apr-12-sub1}
\pi(\vec  G_1\boxtimes  \vec  G_2) \subseteq  \pi(\vec  G_1)\boxtimes  \pi(\vec  G_2),
\end{eqnarray}
i.e.  the  underlying   graph  of  the   strong  product   of  digraphs    is  a  subgraph  of  
 the  strong  product   of  the  underlying  graphs.  
\end{lemma}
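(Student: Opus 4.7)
The plan is to verify the set inclusion on edges directly from the definitions, since both graphs share the vertex set $V_1\times V_2$. First I would fix an arbitrary edge $\{(v_1,v_2),(u_1,u_2)\}$ of $\pi(\vec G_1\boxtimes \vec G_2)$ and unpack what this means: by the definition of the underlying graph, at least one of the ordered pairs $(v_1,v_2)\to(u_1,u_2)$ or $(u_1,u_2)\to(v_1,v_2)$ is an arc in $\vec G_1\boxtimes \vec G_2$. By symmetry of the target edge set under swapping, I may assume the first.

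Next, I would apply the definition of the strong product of digraphs to that arc: for each $i=1,2$, either $v_i=u_i$ or $(v_i,u_i)\in E_{\vec G_i}$. Translating componentwise through $\pi$, in the latter case $\{v_i,u_i\}\in E_{\pi(\vec G_i)}$. Hence, for each $i=1,2$, either $v_i=u_i$ or $\{v_i,u_i\}\in E_{\pi(\vec G_i)}$, which is exactly the condition for $\{(v_1,v_2),(u_1,u_2)\}$ to be an edge of $\pi(\vec G_1)\boxtimes \pi(\vec G_2)$. This gives the edge-wise inclusion, and together with the equality of vertex sets yields \eqref{eq-apr-12-sub1}.

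There is essentially no technical obstacle; the argument is a routine unpacking of Definition~\ref{def-dig-25-apr-1} and the definitions of strong products. The only subtle point worth emphasizing after the proof is that the inclusion is generally strict: an edge $\{(v_1,v_2),(u_1,u_2)\}$ of $\pi(\vec G_1)\boxtimes \pi(\vec G_2)$ may come from arcs whose directions in the two coordinates are incompatible (e.g.\ $(v_1,u_1)\in E_{\vec G_1}$ while $(u_2,v_2)\in E_{\vec G_2}$), so no single ordered pair of vertices of $V_1\times V_2$ becomes an arc of $\vec G_1\boxtimes \vec G_2$ witnessing that edge. A brief example (such as a two-arc path in each factor oriented oppositely) would make the strictness transparent, but is not needed for the lemma itself.
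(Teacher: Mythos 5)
Your proof is correct and follows essentially the same route as the paper's: both unpack the edge conditions for $\pi(\vec G_1\boxtimes\vec G_2)$ and for $\pi(\vec G_1)\boxtimes\pi(\vec G_2)$ directly from the definitions and compare them componentwise, with your WLOG-by-symmetry step simply compressing the paper's explicit treatment of the two disjuncts. Your closing remark on why the inclusion can be strict (incompatible orientations in the two coordinates) is a nice additional observation, though not needed for the lemma.
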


\begin{proof}
The  vertex  sets  of  both  $\pi(\vec  G_1\boxtimes  \vec  G_2) $  
and  $ \pi(\vec  G_1)\boxtimes  \pi(\vec  G_2)$  are  $V_1\times  V_2$.   
 For  any  distinct two  vertices  $(v_1,v_2)$  and  $ (u_1,u_2)$  in $V_1\times V_2$,
\begin{eqnarray*}
&&\{(v_1,v_2),  (u_1,u_2)\} {\rm~is~an ~edge~of~}\pi(\vec  G_1\boxtimes  \vec  G_2) \\
&\Longleftrightarrow &
  (v_1,v_2)\to  (u_1,u_2){\rm~or~}   
  (u_1,u_2)\to  (v_1,v_2) {\rm~ is~ an ~ arc~of~}\vec  G_1\boxtimes  \vec  G_2\\
 &\Longleftrightarrow &
 \Big[(v_1=u_1 {\rm~or~}  v_1\to  u_1 {\rm~is~an~arc~of~} \vec  G_1)  
 {\rm~and~} (v_2=u_2 {\rm~or~}  v_2\to  u_2 {\rm~is~an~arc~of~} \vec  G_2) \Big]\\
 &&{\rm~or~}
 \Big[(v_1=u_1 {\rm~or~}  u_1\to  v_1 {\rm~is~an~arc~of~} \vec  G_1)  
 {\rm~and~} (v_2=u_2 {\rm~or~}  u_2\to  v_2 {\rm~is~an~arc~of~} \vec  G_2) \Big]
\end{eqnarray*}
and  
\begin{eqnarray*}
&&\{(v_1,v_2),  (u_1,u_2)\} {\rm~is~an ~edge~of~} 
\pi(\vec  G_1)\boxtimes  \pi(\vec  G_2)\\
&\Longleftrightarrow &
{\rm~for~}i=1,2,  {\rm~ we  ~have~} 
 v_i=u_i{\rm~or~}   
  \{v_i,u_i\} {\rm~ is~ an ~ edge~of~}\pi(\vec  G_i)\\
 &\Longleftrightarrow &
 {\rm~for~}i=1,2,    {\rm~ we  ~have~}   
  v_i=u_i,  {\rm~or~}
  v_i\to  u_i  {\rm~ is~ an ~ arc~of~} \vec  G_i,  
  {\rm~or~}
  u_i\to  v_i  {\rm~ is~ an ~ arc~of~} \vec  G_i.  
\end{eqnarray*}
 Therefore,  
 each  edge  of  $\pi(\vec  G_1\boxtimes  \vec  G_2) $   
 is  an  edge  of  $ \pi(\vec  G_1)\boxtimes  \pi(\vec  G_2)$.  
 We  obtain  (\ref{eq-apr-12-sub1}).  
\end{proof}

\begin{corollary}\label{le-8.a1}
For  any  digraph  $\vec  G$  with  its  underlying  graph  $G$ and any  positive  integer  $p$,
  we  have  
\begin{eqnarray}\label{eq-apr-v1}
\pi(\vec  G^{\boxtimes  p})  \subseteq      G^{\boxtimes  p},  
\end{eqnarray} 
i.e.  the  underlying graph  of  the   $p$-fold  strong  product  of    $\vec  G $  is  a  subgraph  of  
the   $p$-fold  strong  product  of  
$   G $.   
\end{corollary}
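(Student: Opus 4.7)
The plan is to induct on $p$, with Lemma~\ref{le-apr-12-1} serving as the engine of the inductive step. The base case $p=1$ is immediate: $\pi(\vec G^{\boxtimes 1})=\pi(\vec G)=G=G^{\boxtimes 1}$, so there is nothing to verify. The inductive step is the only substantive content.

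For the inductive step, suppose the containment $\pi(\vec G^{\boxtimes p})\subseteq G^{\boxtimes p}$ has been established. I would write $\vec G^{\boxtimes (p+1)}=\vec G^{\boxtimes p}\boxtimes \vec G$ and apply Lemma~\ref{le-apr-12-1} to the pair $(\vec G^{\boxtimes p},\vec G)$ to obtain
\[
\pi(\vec G^{\boxtimes (p+1)})=\pi(\vec G^{\boxtimes p}\boxtimes \vec G)\subseteq \pi(\vec G^{\boxtimes p})\boxtimes \pi(\vec G)=\pi(\vec G^{\boxtimes p})\boxtimes G.
\]
To finish the induction I would combine this with the inductive hypothesis using a monotonicity fact for the strong product of graphs: if $H\subseteq H'$ and $K\subseteq K'$, then $H\boxtimes K\subseteq H'\boxtimes K'$. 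This is read off directly from the edge criterion defining $\boxtimes$—if an edge of $H\boxtimes K$ is witnessed by edges (or equalities) in $H$ and $K$, those same witnesses lie in $H'$ and $K'$. Applying this with $(H,K)=(\pi(\vec G^{\boxtimes p}),G)$ and $(H',K')=(G^{\boxtimes p},G)$ yields $\pi(\vec G^{\boxtimes p})\boxtimes G\subseteq G^{\boxtimes p}\boxtimes G=G^{\boxtimes (p+1)}$, and composing the two containments gives the desired inclusion (\ref{eq-apr-v1}) for $p+1$.

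There is no real obstacle here; the only step requiring a brief verification is the monotonicity of $\boxtimes$, which is a one-line unpacking of the definition given just before Lemma~\ref{le-apr-12-1}. Everything else is an application of Lemma~\ref{le-apr-12-1} together with the observation that both sides of (\ref{eq-apr-v1}) have the same vertex set $(V_{\vec G})^p$, so only the edge containment needs to be checked, and this is precisely what the induction delivers.
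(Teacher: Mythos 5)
Your proof is correct and matches the paper's intent exactly: the paper's proof is the terse remark that the result ``follows from Lemma~\ref{le-apr-12-1} and an induction on $p$,'' and your proposal supplies precisely the details of that induction, including the (necessary but routine) monotonicity of $\boxtimes$ with respect to subgraph inclusion.
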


\begin{proof}
The  proof  follows  from  Lemma~\ref{le-apr-12-1}  and  an  induction  on  $p$.   
\end{proof}

\begin{lemma}
\label{le-25-apr-le08a}
\begin{enumerate}[(1)]
\item
For  any  digraphs  $\vec  G_1$  and  $\vec  G_2$,   any  vertices  $u,u'$ of  $\vec  G_1$
and  any  vertices   $v,v'$  of  $\vec  G_2$,  we  have  
\begin{eqnarray}\label{eq-25-apr-d1}
d_{\vec  G_1\boxtimes \vec  G_2}  ((u,v),(u',v'))= \max\{d_{\vec  G_1}(u,u'),  d_{\vec  G_2}(v,v')\}; 
\end{eqnarray}
\item
For  any   graphs  $   G_1$  and  $   G_2$,   any  vertices  $u,u'$ of  $  G_1$
and  any  vertices   $v,v'$  of  $   G_2$,  we  have  
\begin{eqnarray}\label{eq-25-apr-d2}
d_{  G_1\boxtimes  G_2}  ((u,v),(u',v'))= \max\{d_{   G_1}(u,u'),  d_{   G_2}(v,v')\}.  
\end{eqnarray}
\end{enumerate}
\end{lemma}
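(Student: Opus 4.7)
The plan is to establish both inequalities in (1); part (2) will follow by a verbatim argument with arcs replaced by edges (the orientation issue described below is absent in the graph case).

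For the upper bound in (1), I set $d_1 = d_{\vec G_1}(u,u')$ and $d_2 = d_{\vec G_2}(v,v')$, and fix realizing allowed elementary paths $\gamma_1 = x_0 x_1 \cdots x_{d_1}$ in $\vec G_1$ with $\{x_0,x_{d_1}\} = \{u,u'\}$ and $\gamma_2 = y_0 y_1 \cdots y_{d_2}$ in $\vec G_2$ with $\{y_0,y_{d_2}\} = \{v,v'\}$. I then pad the shorter of these paths to length $m = \max\{d_1,d_2\}$ by repeating its terminal vertex, and form the coordinatewise sequence $z_i := (x_i, y_i)$ for $0 \le i \le m$. For each step $z_{i-1} \to z_i$, the first coordinate either stays ($x_{i-1}=x_i$) or traverses an arc of $\vec G_1$, and the same holds in the second coordinate; by the definition of $\vec G_1 \boxtimes \vec G_2$ this is exactly the condition for an arc (or identity) in the strong product. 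So $z_0 z_1 \cdots z_m$ is an allowed elementary path in $\vec G_1 \boxtimes \vec G_2$ of length $m$, yielding $d_{\vec G_1\boxtimes \vec G_2}((u,v),(u',v')) \le m$.

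For the lower bound, I take any realizing allowed elementary path $z_0 z_1 \cdots z_m$ in $\vec G_1 \boxtimes \vec G_2$ connecting $(u,v)$ and $(u',v')$, and write $z_i = (x_i, y_i)$. By the strong-product definition, each step fixes or advances each coordinate by an arc of the corresponding factor, so $x_0 x_1 \cdots x_m$ is an allowed elementary path in $\vec G_1$ connecting $u$ and $u'$, and $y_0 y_1 \cdots y_m$ is an allowed elementary path in $\vec G_2$ connecting $v$ and $v'$, each of length $m$. Hence $d_{\vec G_1}(u,u') \le m$ and $d_{\vec G_2}(v,v') \le m$, giving $\max\{d_{\vec G_1}(u,u'), d_{\vec G_2}(v,v')\} \le m$.

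The main obstacle will be matching orientations in the upper-bound argument for the digraph case: the symmetric distance $d_{\vec G}$ from Definition~\ref{def--3} is the minimum length over allowed paths in either direction, so $\gamma_1$ may realize its distance via a directed path $u \to u'$ while $\gamma_2$ realizes its distance via a directed path $v' \to v$, and then the naive coordinatewise concatenation lands between $(u,v')$ and $(u',v)$ rather than between $(u,v)$ and $(u',v')$. I will handle this by a short case analysis on the four possible orientation combinations, reducing (by symmetry of the desired equality in the two endpoints) to the two cases where the oriented paths are compatible; in those cases the construction above applies directly. For part (2) on graphs, edges are intrinsically undirected so this obstacle disappears and both inequalities go through without modification.
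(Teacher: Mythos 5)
Your lower bound is correct: projecting a realizing path in $\vec G_1 \boxtimes \vec G_2$ onto each factor gives allowed paths of the same length, so $\max\{d_{\vec G_1}(u,u'), d_{\vec G_2}(v,v')\} \le d_{\vec G_1\boxtimes\vec G_2}((u,v),(u',v'))$. For the upper bound you rightly flag that the digraph distance of Definition~\ref{def--3} is symmetrized, so the realizing paths $\gamma_1$ and $\gamma_2$ may carry mismatched orientations. But your proposed repair does not work: swapping the two endpoints $(u,v)\leftrightarrow(u',v')$ reverses the direction in \emph{both} coordinates at once, so a mismatch such as $\gamma_1\colon u\to u'$ with $\gamma_2\colon v'\to v$ becomes the opposite mismatch $\gamma_1\colon u'\to u$ with $\gamma_2\colon v\to v'$, never a compatible pair. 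There is no symmetry of the statement that trades one coordinate's orientation while fixing the other.

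In fact the obstruction is genuine and part~(1) fails as stated. Take $\vec G_1=\vec G_2=\vec{C}_3$, the directed $3$-cycle with vertices $u_{[0]},u_{[1]},u_{[2]}$ and arcs $u_{[k]}\to u_{[k+1]}$. Then $d_{\vec{C}_3}(u_{[0]},u_{[1]})=1$ (via the arc $u_{[0]}\to u_{[1]}$) and $d_{\vec{C}_3}(u_{[0]},u_{[2]})=1$ (via the arc $u_{[2]}\to u_{[0]}$), so the right-hand side of (\ref{eq-25-apr-d1}) for $(u,v)=(u_{[0]},u_{[0]})$, $(u',v')=(u_{[1]},u_{[2]})$ is $1$. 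But in $\vec{C}_3\boxtimes\vec{C}_3$ there is no arc between $(u_{[0]},u_{[0]})$ and $(u_{[1]},u_{[2]})$ in either direction (one would need $(u_{[0]},u_{[2]})\in E_{\vec{C}_3}$ or $(u_{[1]},u_{[0]})\in E_{\vec{C}_3}$, neither of which holds), while $(u_{[0]},u_{[0]})\to(u_{[0]},u_{[1]})\to(u_{[1]},u_{[2]})$ is an allowed path of length $2$; hence the left-hand side is $2\neq 1$. The paper's own argument has the same hidden gap: in the contradiction step it tacitly assumes the shorter paths $\theta$ and $\eta$ realizing $d_{\vec G_1}(u,u')$ and $d_{\vec G_2}(v,v')$ both run from $u$ to $u'$ and from $v$ to $v'$, which Definition~\ref{def--3} does not guarantee. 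Your part~(2) for graphs is correct exactly as you argue, since edges are undirected; for digraphs the statement would need either the hypothesis that $\vec G_1,\vec G_2$ are symmetric (so $\vec G_i=\pi^{-1}(G_i)$), or a one-directional (asymmetric) notion of distance.
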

\begin{proof}
(1)  Suppose $d_{ \vec  G_1\boxtimes  \vec  G_2}((u,v),(u',v'))=n$.  
  Then  there  exists  a  minimal  path  
 \begin{eqnarray*}
 \gamma=(u_0, v_0)   (u_1, v_1)   \ldots  (u_n,v_n) 
 \end{eqnarray*}
 in  $  G_1\boxtimes  G_2$  
   such  that  
 $(u_0,v_0)=(u,v)$,  $(u_n,v_n)=(u',v')$  and  
 $(u_{i-1}, v_{i-1})\to  (u_i,v_i)$  is  an  arc  of  $\vec  G_1\boxtimes  \vec  G_2$
  for  any  $1\leq  i\leq  n$.   
 By  the  definition  of  the  strong  product  of  digraphs,     either  $u_{i-1}=u_i$  or 
 $u_{i-1}\to  u_i$  is  an  arc  of  $\vec  G_1$,    and  
  either  $v_{i-1}=v_i$  or  $v_{i-1}\to  v_i$  is  an  arc  of  $\vec  G_2$.  
  Thus   $\eta_1(\gamma)$  is  a  path   in  $\vec  G_1$  from  $u$  to  $u'$   and   $\eta_2(\gamma)$    is  a  path   in  $\vec  G_2$   from  $v$  to  $v'$.   
 We  claim  that  
 either  
 $\eta_1(\gamma)=u_0u_1\ldots  u_n$  is  a  minimal  path  in  $\vec  G_1$  
  or    $\eta_2(\gamma)=v_0v_1\ldots  v_n$  is  a  minimal  path  in  $\vec  G_2$.  
 Since  the  minimality of   $\eta_1(\gamma)$  is equivalent to  $d_{ \vec  G_1}(u,u')=n$ 
 and   the  minimality of   $\eta_2(\gamma)$  is equivalent to  $d_{ \vec  G_2}(v,v')=n$,  
 this  claim  implies (\ref{eq-25-apr-d1}).

 To  prove  the  claim, 
we  suppose  to  the  contrary  that    $\eta_i(\gamma)$  
   is  not  minimal  in  $\vec  G_i$  for  both  $i=1,2$.   
   Then  there  exists   $\theta=\tilde  u_0 \tilde u_1 \ldots \tilde  u_m$  such  that  
   $m\leq  n-1$,  $\tilde  u_0=u$,  $\tilde  u_m=u'$  and  
   $\tilde  u_{i-1}\to  \tilde  u_i$  is  an  arc  of  $\vec  G_1$  for  any  $1\leq  i\leq  m$
    as  well  as    $\eta=\tilde  v_0 \tilde v_1 \ldots \tilde  v_l$  such  that  
   $l\leq  n-1$,  $\tilde  v_0=v$,  $\tilde  v_l=v'$  and  
   $\tilde  v_{i-1}\to  \tilde  v_i$  is  an  arc  of  $\vec  G_2$  for  any  $1\leq  i\leq  l$.  
   Without  loss  of  generality,  assume  $m\leq   l$.  
   Let  
   \begin{eqnarray*}
   \tilde \gamma =  ( \tilde  u_0,\tilde  v_0)  ( \tilde  u_1,\tilde  v_1)
   \ldots   ( \tilde  u_m,\tilde  v_m) \ldots  ( \tilde  u_m,\tilde  v_l).  
   \end{eqnarray*}
   Then $\tilde \gamma$  is  a  path  in  $\vec  G_1\boxtimes  \vec  G_2$
   from  $(u,v)$  to  $(u',v')$  of  length  $l$.  
   This  contradicts  that  $\gamma$  is  minimal.  
     We  obtain  the  claim.

(2)  The  proof  is  analogous  with  (1).  
\end{proof}

\begin{corollary}\label{co-25-apr-prod-5}
\begin{enumerate}[(1)]
\item
For  any   digraph   $ \vec  G $  and   any  vertices  
$v_1,\ldots, v_p,  v'_1,\ldots, v'_p $  of  $\vec  G$,    we  have  
\begin{eqnarray}\label{eq-25-apr-d5}
d_{\vec  G^{\boxtimes p}}  ((v_1,\ldots,v_p),( v'_1,\ldots,v'_p))= \max_{1\leq  i\leq  p}\{d_{\vec  G }(v_i,v'_i)\}; 
\end{eqnarray}
\item
For  any   graph  $   G $  and   any  vertices  
$v_1,\ldots, v_p,  v'_1,\ldots, v'_p $  of  $   G$,    we  have  
\begin{eqnarray}\label{eq-25-apr-d7}
d_{   G^{\boxtimes p}}  ((v_1,\ldots,v_p),( v'_1,\ldots,v'_p))= \max_{1\leq  i\leq  p}\{d_{   G }(v_i,v'_i)\}.  
\end{eqnarray}
\end{enumerate}
\end{corollary}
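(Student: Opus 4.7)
The plan is to prove both parts by induction on $p$, using Lemma~\ref{le-25-apr-le08a} as the engine. The base case $p=1$ is trivial since $\vec G^{\boxtimes 1} = \vec G$ and the maximum over a single index is just that index. The case $p=2$ is exactly Lemma~\ref{le-25-apr-le08a}~(1) for digraphs and (2) for graphs. So the substance is entirely in the inductive step.

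For the inductive step on (1), assume the formula (\ref{eq-25-apr-d5}) holds for some $p\geq 2$. Using associativity of the strong product of digraphs (which is immediate from the defining rule on arcs: the condition ``for each coordinate, either equal or an arc'' is manifestly associative in the coordinates), we identify $\vec G^{\boxtimes (p+1)}$ with $\vec G^{\boxtimes p}\boxtimes \vec G$. Apply Lemma~\ref{le-25-apr-le08a}~(1) with $\vec G_1=\vec G^{\boxtimes p}$ and $\vec G_2=\vec G$ to any pair $((v_1,\ldots,v_p),v_{p+1})$ and $((v'_1,\ldots,v'_p),v'_{p+1})$:
\begin{eqnarray*}
d_{\vec G^{\boxtimes (p+1)}}((v_1,\ldots,v_{p+1}),(v'_1,\ldots,v'_{p+1})) = \max\bigl\{d_{\vec G^{\boxtimes p}}((v_1,\ldots,v_p),(v'_1,\ldots,v'_p)),\, d_{\vec G}(v_{p+1},v'_{p+1})\bigr\}.
\end{eqnarray*}
By the inductive hypothesis the first term on the right equals $\max_{1\leq i\leq p}d_{\vec G}(v_i,v'_i)$, so the right side is $\max_{1\leq i\leq p+1}d_{\vec G}(v_i,v'_i)$, which is (\ref{eq-25-apr-d5}) for $p+1$.

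Part (2) is proved identically, replacing digraphs with graphs, arcs with edges, and Lemma~\ref{le-25-apr-le08a}~(1) with Lemma~\ref{le-25-apr-le08a}~(2), yielding (\ref{eq-25-apr-d7}). I expect no real obstacle: the only point deserving care is the implicit use of associativity of $\boxtimes$, which is a straightforward symmetry in the coordinate-wise definition, so the whole argument is a short induction built on top of the already-proved two-factor case.
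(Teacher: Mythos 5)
Your induction on $p$ via associativity of $\boxtimes$ and the two-factor distance formula is exactly the argument the paper's one-line proof ("follows from Lemma~\ref{le-25-apr-le08a}") tacitly invokes. Correct and same approach; you have simply written out the induction that the paper leaves implicit.
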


\begin{proof}
The  proofs  of  
(1)  and  (2)  follow  from  Lemma~\ref{le-25-apr-le08a}~(1)  and  (2)  respectively.  
\end{proof}

\begin{corollary}
\label{pr-25-apr-ind-o1}
\begin{enumerate}[(1)]
\item
For  any  digraph  $\vec  G$,    the  constraint  independence  complex  
${\rm  Ind}(\vec  G^{\boxtimes  p}, n/2,  m/2)$   for  $1\leq  n<m\leq  \infty$   is  given by 
 the  simplices  of  the  form
\begin{eqnarray*}
\sigma^{(k)} = \{(v^0_1,\ldots,v^0_p),(v^1_1,\ldots,v^1_p),\ldots,(v^k_1,\ldots,v^k_p) \},~~~~~~
k\geq  0
\end{eqnarray*}  
such  that  $v_i^t$  are  vertices  of  $\vec  G$  for any  $1\leq  i\leq  p$  and  any  
$0\leq  t\leq  k$   satisfying  
\begin{eqnarray*}
\frac{n}{2}<\max_{1\leq  i\leq  p}d_{\vec  G}(v_i^j, v_i^l)\leq \frac{m}{2}, ~~~~~~0\leq  j<l\leq  k;  
\end{eqnarray*}
 
\item
For  any   graph  $  G$,    the  constraint  independence  complex  
${\rm  Ind}(   G^{\boxtimes  p}, n/2,  m/2)$   for  $1\leq  n<m\leq  \infty$   is  given by 
 the  simplices  of  the  form
\begin{eqnarray*}
\sigma^{(k)} = \{(v^0_1,\ldots,v^0_p),(v^1_1,\ldots,v^1_p),\ldots,(v^k_1,\ldots,v^k_p) \},~~~~~~
k\geq  0
\end{eqnarray*}  
such  that  $v_i^t$  are  vertices  of  $   G$  for any  $1\leq  i\leq  p$  and  any  
$0\leq  t\leq  k$   satisfying  
\begin{eqnarray*}
\frac{n}{2}<\max_{1\leq  i\leq  p}d_{ G}(v_i^j, v_i^l)\leq \frac{m}{2}, ~~~~~~0\leq  j<l\leq  k.   
\end{eqnarray*}
\end{enumerate}
\end{corollary}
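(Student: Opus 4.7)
The plan is to derive both parts by directly unwinding Definition~\ref{def-5.a1} (resp.\ Definition~\ref{def-5.b2}) for the strong product and then substituting the distance formula from Corollary~\ref{co-25-apr-prod-5}. No new topological or combinatorial content is needed: the entire argument should be a careful combination of these two ingredients.

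First I would apply Definition~\ref{def-5.a1} to $\vec G^{\boxtimes p}$: by (\ref{eq-3.a1}) together with (\ref{eq-uccs-2}), the $k$-simplices of ${\rm Ind}(\vec G^{\boxtimes p}, n/2, m/2)$ are exactly the unordered $(k+1)$-element subsets of $V_{\vec G^{\boxtimes p}}$ whose pairwise distances in $d_{\vec G^{\boxtimes p}}$ lie in the prescribed constraint interval. Since $V_{\vec G^{\boxtimes p}} = (V_{\vec G})^{p}$ by the definition of the strong product of digraphs, each such vertex may be written as a $p$-tuple $(v^t_1, \ldots, v^t_p)$ with $v^t_i \in V_{\vec G}$, producing precisely the combinatorial shape of $\sigma^{(k)}$ displayed in the statement.

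Next, I would apply Corollary~\ref{co-25-apr-prod-5}(1), which gives
\[
d_{\vec G^{\boxtimes p}}\bigl((v^j_1,\ldots,v^j_p),(v^l_1,\ldots,v^l_p)\bigr) = \max_{1\leq i\leq p} d_{\vec G}(v^j_i, v^l_i)
\]
for every pair of distinct vertices in the product. Plugging this identity into the constraint inequality coming from (\ref{eq-uccs-2}) translates the condition ``lies in ${\rm Conf}_{k+1}(V_{\vec G^{\boxtimes p}},n/2,m/2)/\Sigma_{k+1}$'' into exactly the displayed condition on $\max_{1\leq i\leq p} d_{\vec G}(v^j_i, v^l_i)$ for all $0 \leq j < l \leq k$. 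Part (2) is handled by the same mechanism, swapping Definition~\ref{def-5.a1} for Definition~\ref{def-5.b2} and Corollary~\ref{co-25-apr-prod-5}(1) for Corollary~\ref{co-25-apr-prod-5}(2), together with the analogous description of $V_{G^{\boxtimes p}}$.

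The only subtlety I anticipate is purely notational bookkeeping: Definition~\ref{def-5.a1} parametrizes $(k-1)$-simplices by ${\rm Conf}_{k}$, whereas the corollary writes $\sigma^{(k)} = \{w^{0}, \ldots, w^{k}\}$ with $k+1$ listed vertices, so $\sigma^{(k)}$ is itself a $k$-simplex arising from ${\rm Conf}_{k+1}$. Once this off-by-one convention is reconciled, both (1) and (2) follow by pure substitution, and no further ingredient is required beyond the two results cited above.
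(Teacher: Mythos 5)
Your proposal is correct and is essentially a fleshed-out version of the paper's one-line proof, which says only that the two parts ``follow from Corollary~\ref{co-25-apr-prod-5}~(1) and (2) respectively''; the mechanism in both cases is exactly what you describe: identify $V_{\vec G^{\boxtimes p}}$ with $(V_{\vec G})^p$, unwind Definition~\ref{def-5.a1} via (\ref{eq-uccs-2}), and substitute the $\max$-formula (\ref{eq-25-apr-d5}) for $d_{\vec G^{\boxtimes p}}$. Your off-by-one remark ($(k-1)$-simplices from ${\rm Conf}_k$ versus $\sigma^{(k)}$ with $k+1$ listed vertices from ${\rm Conf}_{k+1}$) is accurate and worth keeping. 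One small caution: you assert the substitution yields ``exactly the displayed condition,'' but (\ref{eq-79.a-a1}) defines ${\rm Conf}_k(V,n/2,m/2)$ by the raw inequality $n < d \leq m$, so the substitution actually produces $n < \max_{1\leq i\leq p} d_{\vec G}(v_i^j, v_i^l) \leq m$ rather than the bounds $\frac{n}{2}$ and $\frac{m}{2}$ that appear in the statement of the corollary; that discrepancy is a typographical slip in the statement, and an exact account should flag it rather than claim literal agreement.
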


\begin{proof}
The  proofs  of  
(1)  and  (2)  follow  from  Corollary~\ref{co-25-apr-prod-5}~(1)  and  (2)  respectively.  
\end{proof}

\begin{lemma}\label{pr-25-apr-tot-geod-1}
\begin{enumerate}[(1)]
\item
  Let  $\varphi_i:    \vec  G_i\longrightarrow   \vec   G'_i$  be  a  strong   totally geodesic  immersion  
   of  digraphs  with radius  $m_0/2$  (resp.  a  strong   totally geodesic  embedding   
   of   digraphs),  $i=1,2$.      
      Then   $\varphi_i$,  $i=1,2$,   induce   a    strong   totally geodesic  immersion  
   of  digraphs    with radius  $m_0/2$  (resp.  a  strong   totally geodesic  embedding   
   of   digraphs)  
   \begin{eqnarray*} 
   \varphi_1\boxtimes  \varphi_2:   
     { \vec    G_1}\boxtimes { \vec    G_2}   \longrightarrow  
      {\vec   G'_1}\boxtimes  {\vec   G'_2};   
   \end{eqnarray*}
   
\item
  Let  $\varphi_i:      G_i\longrightarrow     G'_i$  be  a  strong   totally geodesic  immersion  
   of   graphs  with radius  $m_0/2$  (resp.  a  strong   totally geodesic  embedding   
   of   graphs),  $i=1,2$.      
      Then   $\varphi_i$,  $i=1,2$,   induce   a    strong   totally geodesic  immersion  
   of   graphs    with radius  $m_0/2$  (resp.  a  strong   totally geodesic  embedding   
   of    graphs)  
   \begin{eqnarray*} 
   \varphi_1\boxtimes  \varphi_2:   
    {   G_1}\boxtimes {     G_2} \longrightarrow  
     {    G'_1}\boxtimes  {   G'_2}.     
   \end{eqnarray*}
\end{enumerate}
\end{lemma}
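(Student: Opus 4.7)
The plan is to reduce everything to the distance formula for strong products given in Lemma~\ref{le-25-apr-le08a}. First I would verify that $\varphi_1 \boxtimes \varphi_2$ is a morphism of digraphs (resp.\ graphs). Given an arc $(u_1,u_2)\to (v_1,v_2)$ in $\vec G_1\boxtimes \vec G_2$, by the definition of the strong product either $u_i = v_i$ or $(u_i,v_i)\in E_{\vec G_i}$ for each $i=1,2$; since each $\varphi_i$ is a morphism of digraphs, either $\varphi_i(u_i)=\varphi_i(v_i)$ or $(\varphi_i(u_i),\varphi_i(v_i))\in E_{\vec G'_i}$, whence either $(\varphi_1\boxtimes\varphi_2)(u_1,u_2) = (\varphi_1\boxtimes\varphi_2)(v_1,v_2)$ or $(\varphi_1\boxtimes\varphi_2)(u_1,u_2)\to (\varphi_1\boxtimes\varphi_2)(v_1,v_2)$ is an arc of $\vec G'_1\boxtimes \vec G'_2$. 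This settles the morphism property, and the graph case is identical after replacing arcs by edges.

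Next, I would compute distances. By Lemma~\ref{le-25-apr-le08a}~(1),
\begin{eqnarray*}
d_{\vec G_1 \boxtimes \vec G_2}((u_1,u_2),(v_1,v_2)) &=& \max\{d_{\vec G_1}(u_1,v_1),\, d_{\vec G_2}(u_2,v_2)\},\\
d_{\vec G'_1 \boxtimes \vec G'_2}((\varphi_1(u_1),\varphi_2(u_2)),(\varphi_1(v_1),\varphi_2(v_2))) &=& \max\{d_{\vec G'_1}(\varphi_1(u_1),\varphi_1(v_1)),\, d_{\vec G'_2}(\varphi_2(u_2),\varphi_2(v_2))\}.
\end{eqnarray*}
In the embedding case each $\varphi_i$ preserves all distances, so the two right-hand sides agree termwise and we obtain (\ref{eq-def-7-1}) for $\varphi_1\boxtimes\varphi_2$, which is the desired strong totally geodesic embedding.

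For the immersion case, suppose $d_{\vec G_1 \boxtimes \vec G_2}((u_1,u_2),(v_1,v_2)) \leq n \leq m_0$. Since the product distance is the maximum of the two coordinate distances, we have both $d_{\vec G_1}(u_1,v_1)\leq n$ and $d_{\vec G_2}(u_2,v_2)\leq n$, so each $\varphi_i$ is permitted to apply its immersion property of radius $m_0/2$, yielding the termwise equality of the two maxima and hence (\ref{eq-def-7-1}) for $\varphi_1\boxtimes\varphi_2$ on pairs with product distance $\leq n$. Part~(2) is the graph-theoretic analogue, using Lemma~\ref{le-25-apr-le08a}~(2) in place of (1); the argument is word-for-word the same. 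The only mild point to keep an eye on is the implication ``product distance $\leq n$ forces each coordinate distance $\leq n$'', which uses the max formula in an essential way; there is no circularity and no obstruction, so the proof should be entirely routine given Lemma~\ref{le-25-apr-le08a}.
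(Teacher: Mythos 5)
Your proof is correct and takes the same route the paper does: the paper's own argument is a one-line pointer to the distance formula of Lemma~\ref{le-25-apr-le08a} together with Definition~\ref{def-2}, and you are simply unpacking that pointer, including the (omitted but necessary) check that $\varphi_1\boxtimes\varphi_2$ is a morphism. Nothing to add.
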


\begin{proof}
With  the  help  of  Definition~\ref{def-2},  
the  proofs  of  
(1)  and  (2)  follow  from  Lemma~\ref{le-25-apr-le08a}~(1)  and  (2)  respectively.  
\end{proof}

\begin{corollary}\label{co-25-apr-tot-geod-2}
\begin{enumerate}[(1)]
\item
  Let  $\varphi:    \vec  G \longrightarrow   \vec   G' $  be  a  strong   totally geodesic  immersion  
   of  digraphs  with radius  $m_0/2$  (resp.  a  strong   totally geodesic  embedding   
   of   digraphs).      
      Then  we  have  an   induced       strong   totally geodesic  immersion  
   of  digraphs    with radius  $m_0/2$  (resp.  a  strong   totally geodesic  embedding   
   of   digraphs)  
   \begin{eqnarray}\label{eq-25-apr-map1}
   \varphi^{\boxtimes  p}:   
     {\vec    G }^{\boxtimes  p }   \longrightarrow  
      ({\vec   G'  })  ^{\boxtimes   p};   
   \end{eqnarray}
   
\item
  Let  $\varphi_i:      G \longrightarrow     G' $  be  a  strong   totally geodesic  immersion  
   of   graphs  with radius  $m_0/2$  (resp.  a  strong   totally geodesic  embedding   
   of   graphs).      
      Then  we  have  an   induced         strong   totally geodesic  immersion  
   of   graphs    with radius  $m_0/2$  (resp.  a  strong   totally geodesic  embedding   
   of    graphs)  
   \begin{eqnarray} \label{eq-25-apr-map2}
   \varphi^{\boxtimes  p}:   
     {  G }^{\boxtimes  p }   \longrightarrow  
      ({   G' })^{\boxtimes   p}.     
   \end{eqnarray}
\end{enumerate}
\end{corollary}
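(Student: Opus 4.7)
The plan is to establish both parts simultaneously by induction on $p \geq 1$, with Lemma~\ref{pr-25-apr-tot-geod-1} serving as the engine of the inductive step. The base case $p=1$ is trivial since $\varphi^{\boxtimes 1}=\varphi$ and the hypothesis is precisely what we want to conclude.

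For the inductive step in part (1), I would first verify that the strong product of digraphs is associative up to canonical isomorphism, so that we may identify $\vec G^{\boxtimes (p+1)}$ with $\vec G^{\boxtimes p} \boxtimes \vec G$ and correspondingly $(\vec G')^{\boxtimes (p+1)}$ with $(\vec G')^{\boxtimes p} \boxtimes \vec G'$, and under these identifications the map $\varphi^{\boxtimes (p+1)}$ corresponds to $\varphi^{\boxtimes p} \boxtimes \varphi$. Associativity follows directly from the symmetric definition of arcs in the strong product via the clause \emph{for each coordinate, either coordinates agree or there is an arc}. Assuming inductively that $\varphi^{\boxtimes p}: \vec G^{\boxtimes p}\longrightarrow (\vec G')^{\boxtimes p}$ is a strong totally geodesic immersion with radius $m_0/2$ (resp.\ a strong totally geodesic embedding), I would then apply Lemma~\ref{pr-25-apr-tot-geod-1}~(1) to the pair $(\varphi^{\boxtimes p},\varphi)$ to conclude that their strong product is again a strong totally geodesic immersion with radius $m_0/2$ (resp.\ an embedding), which via the identification above is exactly $\varphi^{\boxtimes (p+1)}$. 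This gives the induction step and hence the map (\ref{eq-25-apr-map1}).

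Part (2) for graphs is handled in exactly the same way, substituting Lemma~\ref{pr-25-apr-tot-geod-1}~(2) for Lemma~\ref{pr-25-apr-tot-geod-1}~(1), and using the associativity of the strong product of graphs. This yields the map (\ref{eq-25-apr-map2}).

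The only potentially subtle point, which I would highlight but not belabor, is the compatibility of iterated strong products with the distance formulas of Corollary~\ref{co-25-apr-prod-5}; concretely one needs that under the associativity identification, the max over $p+1$ coordinates agrees with the max-of-max obtained by grouping the first $p$ coordinates separately from the last. Since the maximum operation is itself associative, this compatibility is immediate, and no additional computation is required. Thus the induction closes cleanly and no essentially new obstacle arises beyond what was handled in Lemma~\ref{pr-25-apr-tot-geod-1}.
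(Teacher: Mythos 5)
Your proof is correct and follows essentially the same route as the paper: the paper simply states that the result follows from Lemma~\ref{pr-25-apr-tot-geod-1}, implicitly invoking the induction on $p$ and the associativity of $\boxtimes$ that you have carefully spelled out. The extra remark about compatibility of the max-distance formula with associativity is a worthwhile detail but, as you correctly note, immediate.
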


\begin{proof}
The  proofs  of  
(1)  and  (2)  follow  from  Lemma~\ref{pr-25-apr-tot-geod-1}~(1)  and  (2)  respectively.  
\end{proof}

\subsection{Shannon  capacities of   digraphs  and  their  underlying graphs}
\label{ssa-5.2}

   For any positive  integer  $n$,  let ${  G}^{\boxtimes  n}$ be the $n$-fold  self-strong product of 
 $   G$.  
 The  {\it Shannon capacity}  $c(G)$  of $G$ is  defined to be  (cf. \cite[p. 1]{1}  and   \cite{2,7,4})
\begin{eqnarray}\label{eq-0001}
c(G)=\sup_{p\geq 1}~ \big(\alpha(G^{\boxtimes  p})\big)^{\frac{1}{p}}=\lim _{p\to \infty}  \big(\alpha(G^{\boxtimes  p})\big)^{\frac{1}{p}} 
\end{eqnarray}
where  
\begin{eqnarray*}
\alpha(G^{\boxtimes  p})= \dim  {\rm  Ind}(G^{\boxtimes   p}, \frac{1}{2}, \infty)+1 
\end{eqnarray*}
is  the maximum size of  an  independent set of vertices in $G^{\boxtimes   p}$.   
We  generalize (\ref{eq-0001})  and  define  the  {\it   double-persistent   Shannon capacity}
of  $G$   by  
\begin{eqnarray*}
c(G,-,-)=\Big\{c(G,\frac{n}{2},\frac{m}{2})~\Big|~ 1\leq  n<m \leq  \infty \Big\}
\end{eqnarray*}
  where   
  \begin{eqnarray}\label{eq-000x1}
c(G,\frac{n}{2},\frac{m}{2})=\limsup _{p\to \infty}  \big(\alpha(G^{\boxtimes  p},\frac{n}{2},\frac{m}{2})\big)^{\frac{1}{p}} 
\end{eqnarray}
and  
\begin{eqnarray*}
\alpha(G^{\boxtimes  p},\frac{n}{2},\frac{m}{2})=  \dim  {\rm  Ind}(G^{\boxtimes  p},\frac{n}{2},\frac{m}{2})+1.    
\end{eqnarray*}  
Similarly,   
we   define  the  {\it   double-persistent   Shannon capacity}
of  $\vec   G$   by  
\begin{eqnarray*}
c(\vec   G,-,-)=\Big\{c(\vec   G,\frac{n}{2},\frac{m}{2})~\Big|~ 1\leq  n<m \leq  \infty \Big\}
\end{eqnarray*}
  where   
  \begin{eqnarray}\label{eq-100x1}
c(\vec   G,\frac{n}{2},\frac{m}{2})
  =\lim \sup_{p\to \infty}  \big(\alpha({\vec  G}^{\boxtimes  p},\frac{n}{2},\frac{m}{2})\big)^{\frac{1}{p}} 
\end{eqnarray}
and  
\begin{eqnarray*}
\alpha({\vec  G}^{\boxtimes  p},\frac{n}{2},\frac{m}{2})=  \dim  {\rm  Ind}({\vec  G}^{\boxtimes  p},\frac{n}{2},\frac{m}{2})+1.    
\end{eqnarray*}

\begin{proposition}\label{pr-25-apr-12-sha1}
For any  digraph  $\vec  G$  with  its  underlying  graph  $G$, 
we  have    persistent  embeddings  of   filtered  simplicial  complexes  
\begin{eqnarray}\label{eq-apr-12-m2}
&{\rm  Ind}({  G}^{\boxtimes  p}, -,\infty) \longrightarrow  {\rm  Ind}(\pi({\vec  G}^{\boxtimes  p}), -,\infty)
\longrightarrow       {\rm  Ind}( {\vec  G}^{\boxtimes  p}, -,\infty),\\
&{\rm  Ind}( { \vec G}^{\boxtimes  p},  \frac{1}{2},-)  \longrightarrow 
{\rm  Ind}(\pi({\vec  G}^{\boxtimes  p}),\frac{1}{2},-)    
\longrightarrow {\rm  Ind}({  G}^{\boxtimes  p}, \frac{1}{2},-).   
\label{eq-apr-12-m23456}
\end{eqnarray}
\end{proposition}

\begin{proof}
Both  $ \pi(\vec  G^{\boxtimes  p}) $  and  $   G^{\boxtimes  p}$  have  vertex  sets   
$V^p$.  
By  (\ref{eq-apr-v1}), 
$ G^{\boxtimes  p}$  is  obtained  from  $\pi(\vec  G^{\boxtimes  p})$  
by  adding  more  edges,   which  implies  
\begin{eqnarray}\label{eq-25-09-87}
d_{ G^{\boxtimes  p}}\leq  d_{\pi(\vec  G^{\boxtimes  p})}. 
\end{eqnarray}
It  follows  from  (\ref{eq-25-09-87})  that  
\begin{eqnarray*}
{\rm  Ind}({  G}^{\boxtimes  p}, \frac{n}{2},\infty) &\subseteq&  
{\rm  Ind}(\pi({\vec  G}^{\boxtimes  p}),\frac{n}{2},\infty),    \\
{\rm  Ind}({  G}^{\boxtimes  p}, \frac{1}{2},\frac{m}{2}) &\supseteq&  
{\rm  Ind}(\pi({\vec  G}^{\boxtimes  p}),\frac{1}{2},\frac{m}{2})    
\end{eqnarray*}
for   any  $1\leq  n<m\leq  \infty$.  
Consequently,  we  have  persistent  simplicial  embeddings  
\begin{eqnarray*}
i'_{\vec  G^{\boxtimes  p}}(-,\infty): &&  {\rm  Ind}({  G}^{\boxtimes  p}, -,\infty) \longrightarrow
{\rm  Ind}(\pi({\vec  G}^{\boxtimes  p}),-,\infty),    \\
j'_{\vec  G^{\boxtimes  p}}(\frac{1}{2},-):  && {\rm  Ind}(\pi({\vec  G}^{\boxtimes  p}),\frac{1}{2},-)    
\longrightarrow {\rm  Ind}({  G}^{\boxtimes  p}, \frac{1}{2},-).     
\end{eqnarray*}
On  the  other  hand,   
by 
Proposition~\ref{pr-3.v1},  we  have    persistent  simplicial  embeddings  
 \begin{eqnarray*} 
i_{\vec  G^{\boxtimes  p}}(-,\infty):   &&  {\rm  Ind}(\pi({\vec  G}^{\boxtimes  p}),  -,\infty)  \longrightarrow 
 {\rm  Ind}( {\vec  G}^{\boxtimes  p}, -,\infty),\\
j_{\vec  G^{\boxtimes  p}}(\frac{1}{2},-):  &&  
 {\rm  Ind}( { \vec G}^{\boxtimes  p},  \frac{1}{2},-)  \longrightarrow 
 {\rm  Ind}( \pi({\vec  G}^{\boxtimes  p}), \frac{1}{2},-). 
   \end{eqnarray*}
Therefore,  
the  composition  of  $i'_{\vec  G^{\boxtimes  p}}(-,\infty)$  and  
$i_{\vec  G^{\boxtimes  p}}(-,\infty)$  implies  (\ref{eq-apr-12-m2});  
and  the  composition  of  $j_{\vec  G^{\boxtimes  p}}({1}/{2},-)$  and  
$j'_{\vec  G^{\boxtimes  p}}({1}/{2},-)$  implies  (\ref{eq-apr-12-m23456}). 
\end{proof}

\begin{proposition}
\label{pr-25-apr-12-sha5}
For any  digraph  $\vec  G$  with  its  underlying  graph  $G$  and  any  positive integer  $q$, 
\begin{eqnarray}\label{25-apr-o3}
&
c(G,-,\infty)\leq    c(\pi({\vec  G}^{\boxtimes  q}),-,\infty)^{\frac{1}{q}}  \leq   c(\vec  G,-,\infty),\\
\label{25-apr-o33333}
&
c(\vec  G,\frac{1}{2},-)\leq    c(\pi({\vec  G}^{\boxtimes  q}),\frac{1}{2},-)^{\frac{1}{q}}  
\leq   c(   G,\frac{1}{2},-).  
\end{eqnarray}
\end{proposition}

\begin{proof}
For  any  positive  integer  $p$,  
it  follows  from  (\ref{eq-apr-12-m2})  that  
we  have     double-persistent  embeddings  of   double-filtered   simplicial  complexes  
\begin{eqnarray}\label{eq-apr-12-m8}
{\rm  Ind}({  G}^{\boxtimes  pq}, -,\infty) \longrightarrow  {\rm  Ind}(\pi({\vec  G}^{\boxtimes  pq}), -,\infty)
\longrightarrow  {\rm  Ind}(\pi({\vec  G}^{\boxtimes  q})^{\boxtimes  p}, -,\infty)
\longrightarrow       {\rm  Ind}( {\vec  G}^{\boxtimes  pq}, -,\infty).  
\end{eqnarray}
Taking  the  dimensions  of  the  simplicial  complexes  in   (\ref{eq-apr-12-m8}),  
we  have  
\begin{eqnarray*}
\alpha({  G}^{\boxtimes  pq}, -,\infty) \leq   \alpha(\pi({\vec  G}^{\boxtimes  pq}), -,\infty)
\leq   \alpha(\pi({\vec  G}^{\boxtimes   q})^{\boxtimes   p}, -,\infty)
\leq        \alpha( {\vec  G}^{\boxtimes  pq}, -,\infty).    
\end{eqnarray*}
This   implies 
\begin{eqnarray}\label{eq-25-apr-c9}
\limsup_{p\to\infty} \alpha({  G}^{\boxtimes  pq}, -,\infty)^{\frac{1}{pq}}
\leq   \limsup _{p\to\infty}   \alpha(\pi({\vec  G}^{\boxtimes   q})^{\boxtimes   p}, -,\infty)^{\frac{1}{pq}}
\leq  \limsup _{p\to\infty} \alpha({\vec  G}^{\boxtimes  pq}, -,\infty)^{\frac{1}{pq}}. 
\end{eqnarray}
Substituting  
\begin{eqnarray*}
c(G,-,\infty)  &=&  \limsup_{p\to\infty} \alpha({  G}^{\boxtimes  pq}, -,\infty)^{\frac{1}{pq}},\\
 c(\pi({\vec  G}^{\boxtimes  q}),-,\infty)^{\frac{1}{q}}  &=&
   \limsup _{p\to\infty}   \alpha(\pi({\vec  G}^{\boxtimes   q})^{\boxtimes   p}, -,\infty)^{\frac{1}{pq}},\\
    c(\vec  G,-,\infty) &=& \limsup _{p\to\infty} \alpha({\vec  G}^{\boxtimes  pq}, -,\infty)^{\frac{1}{pq}}  
\end{eqnarray*}
in  (\ref{eq-25-apr-c9}),
we  obtain  (\ref{25-apr-o3}).  
By  a  similar  argument,  we  obtain   (\ref{25-apr-o33333}) from  (\ref{eq-apr-12-m23456}).  
\end{proof}

 \begin{proposition}\label{pr-25-apr-shan-98}
 \begin{enumerate}[(1)]
\item
  Let  $\varphi:    \vec  G \longrightarrow   \vec   G' $  be  a  strong   totally geodesic  immersion  
   of  digraphs  with radius  $m_0/2$  (resp.  a  strong   totally geodesic  embedding   
   of   digraphs).      
      Then   
   \begin{eqnarray}\label{eq-8-l15}   
     c ({\vec    G },-,-)    \leq   
      c({\vec   G'  },-,-)      
   \end{eqnarray}
  for  $1\leq  n<m\leq  m_0$    (resp.   for  $1\leq  n<m\leq  \infty $)
    where  $n/2$    and  $m/2$  are the     parameters  
   in  the  double-persistence;
\item
  Let  $\varphi_i:      G \longrightarrow     G' $  be  a  strong   totally geodesic  immersion  
   of   graphs  with radius  $m_0/2$  (resp.  a  strong   totally geodesic  embedding   
   of   graphs).      
      Then           
   \begin{eqnarray} \label{eq-8-l25}   
     c ({     G },-,-)    \leq   
      c({   G'  },-,-)      
   \end{eqnarray}
  for  $1\leq  n<m\leq  m_0$    (resp.   for  $1\leq  n<m\leq  \infty $)
    where  $n/2$    and  $m/2$  are the     parameters  
   in  the  double-persistence.
   \end{enumerate}
 \end{proposition}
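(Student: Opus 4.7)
The plan is to deduce both inequalities by chaining together the results on strong totally geodesic maps and independence complexes that have already been established, then pushing the resulting embeddings through the definition of $c$ as a $\limsup$ of $p$-th roots of $\alpha$.

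First, for part~(1), I would begin with the given strong totally geodesic immersion (resp.\ embedding) $\varphi:\vec G\to\vec G'$ of radius $m_0/2$ and invoke Corollary~\ref{co-25-apr-tot-geod-2}~(1) to obtain, for every positive integer $p$, an induced strong totally geodesic immersion (resp.\ embedding) $\varphi^{\boxtimes p}:\vec G^{\boxtimes p}\to (\vec G')^{\boxtimes p}$ of the same radius $m_0/2$ (resp.\ embedding). Next I would apply Corollary~\ref{co-au-1} to $\varphi^{\boxtimes p}$ to obtain a double-persistent embedding of double-filtered simplicial complexes
\begin{eqnarray*}
\varphi^{\boxtimes p}(-,-):\;{\rm Ind}(\vec G^{\boxtimes p},-,-)\longrightarrow {\rm Ind}((\vec G')^{\boxtimes p},-,-)
\end{eqnarray*}
on the admissible range $1\le n<m\le m_0$ (resp.\ $1\le n<m$). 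Passing to dimensions and adding one yields
\begin{eqnarray*}
\alpha(\vec G^{\boxtimes p},\tfrac{n}{2},\tfrac{m}{2})\;\le\;\alpha((\vec G')^{\boxtimes p},\tfrac{n}{2},\tfrac{m}{2}).
\end{eqnarray*}

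Then I would take $p$-th roots and pass to $\limsup_{p\to\infty}$ on both sides. Using the definition~(\ref{eq-100x1}) of the double-persistent Shannon capacity as $c(\vec G,n/2,m/2)=\limsup_{p\to\infty}\alpha(\vec G^{\boxtimes p},n/2,m/2)^{1/p}$, this immediately yields~(\ref{eq-8-l15}) in the stipulated range of $n$ and $m$. For part~(2), I would repeat the same argument verbatim, replacing Corollary~\ref{co-25-apr-tot-geod-2}~(1) with its graph analogue Corollary~\ref{co-25-apr-tot-geod-2}~(2) and Corollary~\ref{co-au-1} with its graph analogue Corollary~\ref{co-au-2}, to obtain~(\ref{eq-8-l25}).

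There is essentially no substantive obstacle here: the proof is a bookkeeping assembly of Corollary~\ref{co-25-apr-tot-geod-2} (stability of the geodesic property under $p$-fold strong powers) and Corollary~\ref{co-au-1}/\ref{co-au-2} (functoriality of the independence complex under such maps). The only point requiring a bit of care is tracking the parameter range: in the immersion case, Corollary~\ref{co-25-apr-tot-geod-2} preserves the radius $m_0/2$ and Corollary~\ref{co-au-1} then produces embeddings exactly for $1\le n<m\le m_0$, which matches the range asserted in the proposition, so the monotonicity of $\alpha$ and hence of $c$ holds throughout that range without any loss.
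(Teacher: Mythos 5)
Your proof is correct and follows essentially the same route as the paper's: apply Corollary~\ref{co-25-apr-tot-geod-2} to get a strong totally geodesic immersion/embedding $\varphi^{\boxtimes p}$ on $p$-fold strong powers, then Corollary~\ref{co-au-1} (resp.\ Corollary~\ref{co-au-2}) to get a double-persistent simplicial embedding of independence complexes, compare dimensions to get $\alpha(\vec G^{\boxtimes p},-,-)\leq\alpha((\vec G')^{\boxtimes p},-,-)$, and take $p$-th roots and $\limsup$. The parameter-range bookkeeping you flag is exactly the point the paper also tracks, and it is handled the same way.
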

 
 \begin{proof}
 (1)  
 By  Corollary~\ref{co-25-apr-tot-geod-2}~(1),  
 for  any  positive  integer  $p$,  we  have  an  induced  strong   totally geodesic  immersion  
   of  digraphs  with radius  $m_0/2$  (resp.  a  strong   totally geodesic  embedding   
   of   digraphs)  (\ref{eq-25-apr-map1}).  
   By  Corollary~\ref{co-au-1},  
   we  have  an  induced        double-persistent  embedding  
   of  double-filtered  simplicial  complexes 
      \begin{eqnarray}\label{eq-8-l1}
   \varphi(-,-):    {\rm  Ind}({\vec  G}^{\boxtimes  p}, -,-)  \longrightarrow  
    {\rm   Ind} (({\vec  G'})^{\boxtimes  p}, -,-)  
   \end{eqnarray}
     for  $1\leq  n<m\leq  m_0$    (resp.   for  $1\leq  n<m\leq\infty $)
     where  $n/2$    and  $m/2$  are the     parameters  
   in  the  double-persistence.
   It  follows  from  (\ref{eq-8-l1})   that 
   \begin{eqnarray}\label{eq-8-l12}
   \alpha({\vec  G}^{\boxtimes  p},-,-)^{\frac{1}{p}}
   \leq   
    \alpha(({\vec  G '})^{\boxtimes  p},-,-)^{\frac{1}{p}}.  
   \end{eqnarray}
   Let $p\to\infty$  in  (\ref{eq-8-l12}).   We  obtain  (\ref{eq-8-l15}).

    (2)  
 By  Corollary~\ref{co-25-apr-tot-geod-2}~(2),  
 for  any  positive  integer  $p$,  we  have  an  induced  strong   totally geodesic  immersion  
   of   graphs  with radius  $m_0/2$  (resp.  a  strong   totally geodesic  embedding   
   of    graphs)  (\ref{eq-25-apr-map2}).  
   By  Corollary~\ref{co-au-2},  
   we  have  an  induced        double-persistent  embedding  
   of  double-filtered  simplicial  complexes 
      \begin{eqnarray}\label{eq-8-l2}
   \varphi(-,-):    {\rm  Ind}({   G}^{\boxtimes  p}, -,-)  \longrightarrow  
    {\rm   Ind} (({   G'})^{\boxtimes  p}, -,-)  
   \end{eqnarray}
     for  $1\leq  n<m\leq  m_0$    (resp.   for  $1\leq  n<m \leq\infty$).
   It  follows  from  (\ref{eq-8-l2})   that 
   \begin{eqnarray*} 
   \alpha({   G}^{\boxtimes  p},-,-)^{\frac{1}{p}}
   \leq   
    \alpha(({   G '})^{\boxtimes  p},-,-)^{\frac{1}{p}},   
   \end{eqnarray*}
whose  limit  $p\to\infty$  implies  (\ref{eq-8-l25}).
 \end{proof}

\bigskip

Shiquan Ren     

 Affiliation:  School of Mathematics and Statistics,  Henan University

 Address:    Kaifeng  475004,  China 

E-mail:  renshiquan@henu.edu.cn

\end{document}